\DeclareMathOperator{\pnt}{\raise 0.5mm \hbox{\large\textbf{.}}}
\newcommand{\note}[2][ ]{}
\newtheorem{theorem}{Theorem}
\newtheorem{lemma}[theorem]{Lemma}
\newtheorem{corollary}[theorem]{Corollary}
\newtheorem{conjecture}[theorem]{Conjecture}
\theoremstyle{definition}
\newtheorem{definition}[theorem]{Definition} 
\newtheorem{remark}[theorem]{Remark}
\begin{document}
\title[Parity of the coefficients of certain eta-quotients]{Parity of the coefficients of certain eta-quotients}
\author{William J. Keith and Fabrizio Zanello} \address{Department of Mathematical Sciences\\ Michigan Tech\\ Houghton, MI  49931-1295}
\email{wjkeith@mtu.edu, zanello@mtu.edu}
\thanks{2010 {\em Mathematics Subject Classification.} Primary: 11P83; Secondary:  05A17, 11P84, 11P82, 11F33.\\\indent 
{\em Key words and phrases.} Partition function; binary $q$-series; density odd values; partition identity; modular form modulo 2; regular partition; eta-quotient; parity of the partition function.}

\maketitle

\begin{abstract} We investigate the parity of the coefficients of certain eta-quotients, extensively examining the case of $m$-regular partitions. Our theorems concern the density of their odd values, in particular establishing lacunarity modulo 2 for specified coefficients; self-similarities modulo 2; and infinite families of congruences in arithmetic progressions.  For all $m \leq 28$, we either establish new results of these types where none were known, extend previous ones, or conjecture that such results are impossible.

All of our work is consistent with a new, overarching conjecture that we present for arbitrary eta-quotients, greatly extending Parkin-Shanks' classical conjecture for the partition function.  We pose several other open questions throughout the paper, and conclude by suggesting a list of specific research directions for future investigations in this area.
\end{abstract}

\section{Introduction and Statement of the Main Conjecture}

Eta-quotients are fundamental objects in the theory of modular forms and the combinatorics of partitions, widely studied for their arithmetic properties.  If we let $f_j := \prod_{i=1}^\infty (1-q^{ji})$, then an \emph{eta-quotient} is a function of the form
\begin{equation}\label{eta}
F(q)=q^{(1/24) (\sum \alpha_i r_i - \sum \gamma_i s_i)}\frac{\prod_{i=1}^uf_{\alpha_i}^{r_i}}{\prod_{i=1}^tf_{\gamma_i}^{s_i}},
\end{equation}
for integers $\alpha_i$ and $\gamma_i$ positive and distinct, $r_i, s_i > 0$, and $u,t\ge 0$.  The related function of most combinatorial interest typically does not present the power shift, and when not calculating precisely we often conflate the two.

One of the most challenging and interesting questions regarding eta-quotients, which has motivated a great deal of ongoing research in this area, concerns the study of the parity of their coefficients.  When we say a series has density $\delta$, we are speaking of the natural density of its odd coefficients.  We define it formally:
\begin{definition} The power series $f(q) = \sum_{n=0}^\infty a(n) q^n$ is said to have \emph{(odd) density} $\delta$  if the limit
$$\lim_{x \rightarrow \infty} \frac{1}{x} \# \{ a(n) : n \leq x,{\ } a(n)\equiv 1{\ } (\text{mod}{\ } 2) \}$$ exists and equals $\delta$.
\end{definition}
When $\delta=0$, we say that $f$ (or its coefficients) is \emph{lacunary modulo 2}. 

The best-known special case of (\ref{eta}) is perhaps
$$\frac{1}{f_1}=\sum_{n\ge 0} p(n)q^n,$$
where $p(n)$ is the ordinary partition function \cite{Andr}. It is widely believed that $p(n)$ has odd density $1/2$, and that, in fact, its parity behaves essentially \lq \lq at random'' \cite{Calk,NiSa,PaSh}, though proving this appears extremely hard. We know that $p(n)$ assumes infinitely many even, and infinitely many odd values.  By results of Ono \cite{Ono2} and Radu \cite{Radu}, this remains true for $p(An+B)$, for any given $A$ and $B$; in particular, $p(n)$ is not constant modulo 2 over any arithmetic progression.  We only remark here that this is in stark distinction to the existence of progressions $p(An+B) \equiv 0 \pmod{m}$ for any $m$ coprime to 6, for which there is a well-established unifying theorem in terms of the crank statistic (see, e.g., \cite{Mahlburg} for details).

The best asymptotic bound available today on the parity of $p(n)$ (which can also be applied to a broader class of eta-quotients) was obtained in 2018 by Bella\"iche \emph{et al.} \cite{BGS}, after decades of incremental progress. Unfortunately, it only guarantees $p(n)$ is odd for at least the order of $\sqrt{x}/ \log \log x$ integers $n\le x$, while the correct asymptotic estimate is conjecturally $x/2$.

More generally, we (along with then-graduate student S. Judge) recently conjectured that for any odd integer $t\ge 1$, the $t$-multipartition function $p_t(n)$, whose generating function is $1/f_1^t$, has odd density $1/2$ (see \cite{JKZ,JZ}, where we also related such densities for infinitely many values of $t$). 

On the other hand, despite an impressive body of literature and a number of mostly isolated conjectures, no eta-quotient is known today whose coefficients have positive odd density. In fact, to the best of our knowledge, not even the \emph{existence} of such density has ever been proven for any eta-quotient, except in those cases where it equals zero! 

Density zero can be established for some families of eta-quotients by means of the following very recent result by Cotron \emph{et al.} \cite{CMSZ}, who built on and generalized seminal works by Serre \cite{Serre} and then Gordon-Ono \cite{GO}. We phrase their theorem as follows:
\begin{theorem}\label{cot}(\cite{CMSZ}, Theorem 1.1.)
Let $F(q)=\frac{\prod_{i=1}^uf_{\alpha_i}^{r_i}}{\prod_{i=1}^tf_{\gamma_i}^{s_i}}$, and assume that
$$\sum_{i=1}^u \frac{r_i}{\alpha_i} \ge \sum_{i=1}^t s_i\gamma_i.$$
Then the coefficients of $F$ are lacunary modulo 2.
\end{theorem}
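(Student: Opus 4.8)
The plan is to work in $\mathbb{F}_2[[q]]$, eliminate the denominator, and rewrite $F$ modulo $2$ as a finite combination of theta series of positive-definite quadratic forms, whose coefficients are lacunary for elementary reasons. Three mod-$2$ facts are used throughout. First, squaring is Frobenius, $g(q)^2\equiv g(q^2)$; this gives $f_d^2\equiv f_{2d}$ (so $f_d^{2^k}\equiv f_{2^k d}$), and, since $F^{2^N}\equiv F(q^{2^N})$, it shows that $F$ is lacunary modulo $2$ if and only if $F^{2^N}$ is, so that we may raise $F$ to any power of $2$ at will. Second, iterating $f_\gamma^{-1}\equiv f_\gamma/f_{2\gamma}$ while letting the exponent grow yields the convergent identity $f_\gamma^{-1}\equiv\prod_{m\ge 0}f_{2^m\gamma}\pmod 2$ (and likewise for $f_\gamma^{-s}$), so that modulo $2$ every eta-quotient is congruent to an eta-\emph{product} with non-negative exponents. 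Third, Euler's pentagonal number theorem and Jacobi's identity furnish the two \lq\lq theta atoms''
$$f_d\equiv\sum_{k\in\mathbb Z}q^{dk(3k-1)/2},\qquad f_d^{3}\equiv\sum_{k\ge 0}q^{dk(k+1)/2}\pmod 2,$$
each of which is supported on $O(\sqrt x)$ integers up to $x$ and hence lacunary.

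With the denominator removed, I would reduce the resulting eta-product to atoms: use $f_d^2\equiv f_{2d}$ and $f_d^4\equiv f_{4d}$ to push the even part of each exponent into the subscript, then split off factors $f_d$ and $f_d^3$ from the odd remainder. This expresses $F$ modulo $2$ as a finite $\mathbb{F}_2$-linear combination of products of theta atoms---i.e., of theta series of positive-definite quadratic forms (possibly inhomogeneous, and with congruence conditions on the variables) in few variables. The hypothesis $\sum_i r_i/\alpha_i\ge\sum_j s_j\gamma_j$ is exactly the bookkeeping inequality that forces this reduction to stop in the \lq\lq small'' range: morally, the numerator's order at the cusp $0$, proportional to $\sum_i r_i/\alpha_i$, must dominate the denominator's order at the cusp $\infty$, proportional to $\sum_j s_j\gamma_j$, in order for the cancellations induced by $f^2\equiv f_{2\cdot}$ to keep every form that appears from representing a positive-density set of integers.

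Granting that, the theorem follows from classical lattice-point estimates. A single theta atom is lacunary by the third fact above. A product of two theta atoms is the theta series of a positive-definite binary quadratic form, whose set of represented values has density $O\!\left(x/\sqrt{\log x}\right)=o(x)$ by Landau's theorem (the congruence restrictions on the variables only thin this set further). And a finite $\mathbb{F}_2$-linear combination of lacunary series is lacunary. Hence the coefficients of $F$ are lacunary modulo $2$.

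I expect the middle step to be the real work: turning the informal \lq\lq reduce to atoms'' into a rigorous algorithm and proving that $\sum_i r_i/\alpha_i\ge\sum_j s_j\gamma_j$ is precisely what keeps the quadratic forms that appear in the density-zero regime. This amounts to a careful $2$-adic analysis of the exponents $\{r_i\},\{s_j\}$ against the arguments $\{\alpha_i\},\{\gamma_j\}$, and it is exactly here that the asymmetric shape of the hypothesis---with $r_i/\alpha_i$ on one side and $s_j\gamma_j$ on the other---is genuinely needed; the reduction modulo $2$ and the Landau-type count are soft by comparison.
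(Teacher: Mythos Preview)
First, note that the paper does not itself prove this theorem: it is quoted verbatim as Theorem~1.1 of Cotron~\emph{et al.}~\cite{CMSZ}, who in turn build on Serre~\cite{Serre} and Gordon--Ono~\cite{GO}. So there is no ``paper's own proof'' to compare against; the relevant benchmark is the argument in~\cite{CMSZ}, which proceeds via the theory of modular forms (one multiplies by a suitable power of an eta-function so that the resulting form is holomorphic at every cusp, and then invokes Serre's theorem that the coefficients of any holomorphic integer-weight modular form are lacunary modulo every prime). The inequality $\sum r_i/\alpha_i\ge\sum s_j\gamma_j$ enters precisely as the condition guaranteeing nonnegative order at the worst cusp after this multiplication; it has nothing to do with bounding the number of ``theta atoms.''

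Your proposal has a genuine gap at exactly the point you flag as ``the real work,'' and the gap cannot be closed along the lines you suggest. You argue that the hypothesis forces the mod-$2$ reduction to land among products of at most \emph{two} theta atoms, so that Landau's binary-form estimate finishes the job. But this is false already for pure eta-\emph{products}, where the right-hand side of the inequality is $0$ and the hypothesis is vacuous. Take $F=f_1f_3f_5$: the inequality reads $1+\tfrac13+\tfrac15\ge 0$, yet the only mod-$2$ reduction available leaves you with a genuine product of three pentagonal theta series, i.e., a ternary quadratic form. More generally, $F=\prod_{p\le P}f_p$ gives arbitrarily many atoms while the hypothesis remains trivially satisfied. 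For such ternary (and higher) products, Landau's theorem says nothing, and there is no elementary substitute: positive-definite ternary forms typically represent a positive density of integers, so you must control the \emph{parity} of the representation count, not merely its support. That parity statement is exactly what Serre's theorem supplies, via $\ell$-adic Galois representations and Chebotarev, and it is not recoverable from the Frobenius trick $f_d^2\equiv f_{2d}$ plus lattice-point counting. In short, your first and third steps are fine, but the middle step---the claim that the hypothesis confines you to the binary regime---is simply not what the inequality does, and without Serre's input the argument does not go through.
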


Note that the hypothesis of this theorem does not apply to the partition function, nor to many of the closely related functions we study in this paper, namely the $m$-\emph{regular partitions} $$B_m(q) := \frac{f_m}{f_1} = \sum_{n=0}^\infty b_m(n) q^n,$$ for $m \leq 28$.  However, after certain easy equivalences, Theorem \ref{cot} can be shown to immediately imply lacunarity modulo 2 for $B_m$ when $m \in \{ 2, 4, 8, 12, 16, 24\}$.

We denote by $\delta_m$ the density, should it exist, of $B_m$.

\begin{remark} Cotron \emph{et al.} \cite{CMSZ} suggested, without making it a formal conjecture, that when the hypothesis of Theorem \ref{cot} does not hold for an eta-quotient, then its density should be $\frac{1}{2}$. In this paper, we exhibit several counterexamples to that claim.  It already follows from earlier work \cite{HS1,JKZ} that $\delta_5 = \frac{1}{4} \delta_{20}$ (assuming they both exist), and therefore $\delta_5 \leq \frac{1}{4}$. In fact, numerical evidence seems to suggest that $\delta_5 = \frac{1}{8}$. Similarly, always assuming existence, we have $\delta_7 = \frac{1}{2}\delta_{28}$ (see \cite{JKZ}), and a natural guess is that $\delta_7=\frac{1}{4}$.

On the other hand, we note that the example at the end of \cite{CMSZ} is consistent with our data. It can actually be shown that the odd density of the function $G_{1,18,3}$ studied in section 5.1 of \cite{CMSZ} coincides with the odd density $\delta_6$ of the 6-regular partitions, and a computer search suggests that $\delta_6=\frac{1}{2}$.
\end{remark}

Almost all of the theorems we prove in this paper concern $B_m$ with $m$ odd, but we briefly note here, for $m$ even, that $b_2(n)$ and $b_4(n)$ are odd precisely for the pentagonal and triangular numbers, respectively. The functions $b_8$ and $b_{16}$ were studied by Cui and Gu in 2013 \cite{CuiGu}, who found families of even progressions $b_8(An+B)$ with $A = p^{2\alpha}$ for $p \equiv -1 \pmod{6}$, and $b_{16}$ with $p \equiv -1 \pmod{4}$.  The series for $b_{12}$ and $b_{24}$ are lacunary modulo 2 by Theorem \ref{cot} above (use $f_{2^j m}/f_1 \equiv {f_m}^{2^j}/f_1$), and we conjecture the existence of infinitely many even arithmetic progressions also for these $m$. However, in analogy to the case of the partition function $p(n)$, for all other even values of $m \leq 28$ we believe that no even progressions exist, and that the corresponding densities $\delta_m$ all equal $\frac{1}{2}$ (see Conjecture \ref{noncong}).

As for the odd values of $m$, we see in this paper that, in many interesting instances, $B_m$ exhibits multiple arithmetic progressions in which $b_m(An+B) \equiv 0 \pmod{2}$ for all $n$.  A few of these have been studied in the past; here, we produce theorems which exhibit even progressions -- in several cases, a large infinite family of them -- for all $m\le 28$ for which few or no such progressions were known before, with the only two exceptions $m=15$ and $m=27$, where we conjecture that no even progression exists. For certain values of $m$, we also include broader conjectures on the existence of suitable progressions or on their density.

The table below, while by no means an exhaustive survey of the area, is a substantial sampling of previously published results concerning the parity of $m$-regular partitions for $m$ odd.  Symbols $(\frac{n}{p})$ are Legendre symbols.

\begin{center}
\begin{tabular}{|c|c|c|}
\hline $m$ & $b_m(An+B)$ known to be even &  Source \\
\hline 3 & & \textbf{New in this paper} \\
\hline 5 & $b_5(2n)$, when $n$ is not twice a pentagonal number & Calkin \emph{et al.} \cite{Calkinetal} \\
\hline 7 & Family modulo $2p^2$ if $(\frac{-14}{p}) = -1$, $p$ prime  & Baruah and Das \cite{BD} \\
\hline 9 & $b_9(2^jn+ c(j))$ for various $j$  & Xia and Yao \cite{xiayao9} \\
 & & \textbf{More new in this paper} \\
\hline 11 & $b_{11}(22n+2,8,12,14,16)$ (finite family) &  Zhao, Jin, and Yao \cite{ZJY} \\
\hline 13 & $b_{13}(2n)$, when $n \neq k(k+1)$, $n \neq 13k(k+1)+3$  & Calkin \emph{et al.} \cite{Calkinetal} \\
\hline 15 & & \textbf{We conjecture none} \\
\hline 17 & $b_{17}(2 \cdot 17^{2\alpha+2}p^{2\beta}n+ c(p,\alpha,\beta))$ if $(\frac{-51}{p})=-1$ & Zhao, Jin, and Yao \cite{ZJY} \\
\hline 19 & $b_{19}(38n+2,8,10,20,24,28,30,32,34)$ (finite family) & Radu and Sellers \cite{RaduSellers} \\
 & & \textbf{More new in this paper} \\
\hline 21 & & \textbf{New in this paper} \\
\hline 23 & Family modulo $2p^2$ if $(\frac{-46}{p}) = -1$, $p$ prime & Baruah and Das \cite{BD} \\
\hline 25 & $b_{25}(100n+64,84)$ (finite family) & Dai \cite{Dai} \\
 & & \textbf{More new in this paper} \\
\hline 27 & & \textbf{We conjecture none} \\
\hline 
\end{tabular}
\end{center}
{\ }\\
\indent All of our theorems, as well as previous results on $m$-regular partitions, are consistent with the following very broad (and bold!) main conjecture on the parity of eta-quotients.  In fact, Conjecture \ref{mainconj} seems to be the first to appear in the literature in this level of generality.
\begin{conjecture}\label{mainconj}
Let $F(q)=\sum_{n\ge 0} c(n)q^n$ be an eta-quotient, shifted by a suitable power of $q$ so powers are integral, and denote by $\delta_F$ the odd density of its coefficients $c(n)$. We have:

i) For any $F$, $\delta_F$ exists and satisfies $\delta_F\le 1/2$.

ii) If $\delta_F= 1/2$, then for any nonnegative integer-valued polynomial $P$ of positive degree, the odd density of $c(P(m))$ is $1/2$. (In particular, $c(Am+B)$ has odd density $1/2$ for all arithmetic progressions $Am+B$.)

iii) If $\delta_F< 1/2$, then the coefficients of $F$ are identically zero (mod 2) on some arithmetic progression. (Note that this is not even \emph{a priori} obvious when $\delta_F=0$.)

iv) If the coefficients of $F$ are not identically zero (mod 2) on \emph{any} arithmetic progression, then they have odd density $1/2$ on \emph{every} arithmetic progression; in particular, $\delta_F= 1/2$.\\(Note that i), ii), and iii) together imply iv), and that iv) implies iii).)
\end{conjecture}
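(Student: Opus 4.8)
The final ``statement'' is itself a conjecture, so what is actually provable here is only the parenthetical assertion that i), ii), and iii) together imply iv), and that iv) implies iii); the plan is to verify these two purely logical implications directly from the Definition of odd density. No modular-forms input is needed — the only subtle points are keeping track of the \emph{existence} of the density and specializing the polynomial $P$ in ii) to the linear case.

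\textbf{Step 1: iv) $\Rightarrow$ iii).} I would argue by contraposition. Suppose the coefficients $c(n)$ are not identically zero modulo $2$ on any arithmetic progression. Then iv) asserts that $\delta_F$ exists and equals $1/2$; in particular ``$\delta_F<1/2$'' fails. Reading this contrapositively: if $\delta_F<1/2$, then $c(n)$ must vanish modulo $2$ on some arithmetic progression — which is exactly iii). (This uses nothing but iv).)

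\textbf{Step 2: i) $+$ ii) $+$ iii) $\Rightarrow$ iv).} Assume the hypothesis of iv), namely that $c(n)$ is not identically zero modulo $2$ on any arithmetic progression. By i), $\delta_F$ exists and $\delta_F\le 1/2$. If we had $\delta_F<1/2$, then iii) would exhibit an arithmetic progression on which $c(n)\equiv 0\pmod 2$, contradicting the hypothesis; hence $\delta_F=1/2$. Now ii) applies: since $\delta_F=1/2$, for every nonnegative integer-valued polynomial $P$ of positive degree the sequence $\bigl(c(P(m))\bigr)_{m\ge 0}$ has odd density $1/2$. Specializing to $P(m)=Am+B$ with $A\ge 1$, $B\ge 0$ (a nonnegative integer-valued polynomial of degree $1$) shows that $c(Am+B)$ has odd density $1/2$ for every arithmetic progression, and this — together with $\delta_F=1/2$ — is precisely the conclusion of iv).

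The only thing to be careful about, and the closest this gets to an ``obstacle,'' is the order of the quantifiers: part i) is genuinely needed in Step 2 to license the dichotomy ``$\delta_F=1/2$ or $\delta_F<1/2$'' in the first place, and the phrase ``odd density $1/2$ on every arithmetic progression'' appearing in ii) and iv) must be read as the limit in the Definition applied to the subsequence $c(Am+B)$. For completeness one should also note that iv) is vacuous whenever a killing progression exists, which is why iv) alone does not recover i) or ii).
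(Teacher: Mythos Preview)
Your analysis is correct: the statement is a conjecture, and the only provable content is the parenthetical logical remark that i), ii), iii) together imply iv), and that iv) implies iii). The paper itself offers no argument for these implications beyond the parenthetical note, so there is no ``paper's proof'' to compare against; your verification simply makes explicit what the authors leave to the reader, and both steps are carried out correctly.
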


In Section \ref{listing}, we list our main theorems.  In Section \ref{back} we present the preliminary results and modular form tools we use throughout.  In Sections \ref{threesec} through \ref{twentyfivesec}, categorized by the value of $m$, we prove our theorems.  In the final section, we offer some concluding remarks, including applications to other classes of eta-quotients, and present further open questions that might be of interest for future investigations.

\section{Main Theorems and Further Conjectures}\label{listing}

We now introduce the main results and conjectures contained in the next sections. In addition to being consistent with the main Conjecture \ref{mainconj}, the theorems below generally describe more specific behaviors for each case listed.  We remark that, of course, they are not exhaustive of all candidates for even arithmetic progressions.

In order to avoid enormous repetition, we use the convention that any congruence $a \equiv b$ is modulo 2 unless explicitly stated otherwise.\\
\\
\noindent \emph{Case $m = 3$:} 

\begin{theorem}\label{b3thm} The series $b_3(2n)$ is lacunary modulo 2.  If $p \equiv 13, 17, 19, 23 \pmod{24}$ is prime, then $$b_3(2(p^2n + kp - 24^{-1})) \equiv 0 $$ for $1 \leq k \leq p-1$, where $24^{-1}$ is taken modulo $p^2$.
\end{theorem}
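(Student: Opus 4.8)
The plan is to reduce everything to a statement about a single weight-one modular form with complex multiplication, taken modulo $2$.

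\emph{Dissection and identification.} Since $f_1^2\equiv f_2$, we have $\frac{f_3}{f_1}\equiv\frac{f_1f_3}{f_2}\pmod 2$. I would then combine this with a $2$-dissection of $f_1f_3/f_2$ --- either one available in the literature of Hirschhorn-type eta-quotient identities, or one derived on a suitable $\Gamma_0(N)$ by matching $q$-expansions up to the Sturm bound --- in order to write $\sum_{n\ge 0}b_3(2n)q^n$ as an explicit eta-quotient $G(q)$. A short mod-$2$ cleanup of $G$ (using $f_k^2\equiv f_{2k}$ and $1/f_k^2\equiv 1/f_{2k}$) should put $G$ in a shape for which the associated honest modular form $F(q):=\sum_{n\ge 0}b_3(2n)\,q^{24n+1}$ --- obtained after the appropriate $q^{1/24}$-shift and $q\mapsto q^{24}$ --- is congruent modulo $2$ to a holomorphic weight-one form on some $\Gamma_0(N)$ with quadratic nebentypus attached to the discriminant $-24$. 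Concretely, I expect $F$ to be the reduction mod $2$ of a weight-one newform with complex multiplication by $K=\mathbb{Q}(\sqrt{-6})$ (equivalently, a combination of binary theta series of discriminant $-24$); this last identification is a finite verification against Sturm's bound, and numerical data on the first few dozen coefficients is entirely consistent with it.

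\emph{Lacunarity.} Granting the above, lacunarity of $b_3(2n)$ follows immediately. Either one checks that the eta-quotient $G$ satisfies the hypothesis of Theorem \ref{cot} and quotes it directly; or one uses that the coefficients of a CM form are supported on indices all of whose prime factors inert in $K$ occur to an even power, and such indices have natural density $0$ (their counting function is $O(x/\sqrt{\log x})$). Either way, $\sum_n b_3(2n)q^n$ is lacunary modulo $2$.

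\emph{The congruences.} The primes $p\equiv 13,17,19,23\pmod{24}$ are precisely those with $\left(\frac{-6}{p}\right)=-1$, i.e. inert in $K$; for such $p$ the $p$-th Hecke eigenvalue of $F$ vanishes, so by multiplicativity the coefficient of $q^{M}$ in $F$ is $\equiv 0\pmod 2$ whenever $v_p(M)$ is odd. Now if $m\equiv p^2n+kp-24^{-1}\pmod{p^2}$ with $1\le k\le p-1$ (where $24^{-1}$ is taken mod $p^2$, which makes sense since $p\ge 13>3$), then
\[
24m+1\equiv 24kp\pmod{p^2},
\]
and since $p\nmid 24k$ we get $v_p(24m+1)=1$, which is odd. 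Hence the coefficient of $q^{24m+1}$ in $F$, which is $\equiv b_3(2m)\pmod 2$, vanishes; as this holds for every such $m$, we obtain $b_3\bigl(2(p^2n+kp-24^{-1})\bigr)\equiv 0$ for $1\le k\le p-1$, as asserted.

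\emph{Main obstacle.} The hard part is the first step: producing the precise $2$-dissection of $f_3/f_1$ and correctly pinning down the mod-$2$ form $F$ --- in particular its level and nebentypus, which is exactly what fixes the scaling ``$24n+1$'' and hence the shift by $24^{-1}\pmod{p^2}$ in the statement, as well as the fact that the excluded residue $k=0$ is the only one that must be excluded. Once $F$ is identified as a CM form for $\mathbb{Q}(\sqrt{-6})$, both the lacunarity and the whole family of congruences are formal consequences of the elementary theory of such forms together with the $p$-adic valuation computation above.
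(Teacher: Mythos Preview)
Your outline is sound and lands on the same arithmetic condition $\bigl(\frac{-6}{p}\bigr)=-1$, but it takes a genuinely different route from the paper, and the step you flag as the ``main obstacle'' is exactly the one the paper carries out by completely elementary means.

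The paper never invokes CM forms. It uses the identity $(f_1f_3)^3\equiv f_1^{12}+qf_3^{12}$ to dissect $f_3/f_1$ into even and odd parts, obtaining $\sum_n b_3(2n)q^n\equiv f_1^4/f_3$; then, via $f_1^3\equiv f_3+qf_9^3$ and the $3$-core expansion $f_3^3/f_1\equiv\sum_n q^{n(3n-2)}$, rewrites this as $f_1\bigl(1+\sum_{m\in\mathbb Z}q^{(3m-1)^2}\bigr)$. Lacunarity is then Landau's lemma on a product of two quadratic theta-type series. For the congruences the paper completes the square on the pentagonal and square summands modulo $p^2$: the condition $n\equiv kp-24^{-1}\pmod{p^2}$ forces $\tfrac32 x^2+y^2\equiv 0\pmod p$, which for $x,y\not\equiv 0\pmod p$ requires $-6$ to be a residue, and otherwise forces $k\equiv 0$.

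Your approach packages that same arithmetic inside the assertion ``$F$ is the mod-$2$ reduction of a CM form for $\mathbb Q(\sqrt{-6})$,'' after which your valuation computation $v_p(24m+1)=1$ is indeed slicker than the paper's residue chase. Two cautions. First, the honest eta-quotient attached to $\sum b_3(2n)q^n$ is $\eta(z)^4/\eta(3z)$, which has weight $3/2$, not $1$; your ``weight-one newform'' framing needs adjustment (repairable mod $2$, e.g.\ by multiplying by a power of $\Delta$, or by working directly with binary theta series of discriminant $-24$). Second, the identification you only ``expect'' is precisely what the paper's two dissection identities establish concretely, with no Sturm bound and no appeal to the theory of Hecke eigenforms. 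What your framing buys is a conceptual explanation of \emph{why} the inert primes appear and an immediate route to higher powers of $p$; what the paper's buys is a complete, self-contained proof.
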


More generally, we conjecture that:
\begin{conjecture}\label{threeconj} For any prime $p > 3$, let $\alpha \equiv -24^{-1} \pmod{p^2}$, $0 < \alpha < p^2$.  It holds for a positive proportion of primes $p$, including those in the previous theorem, that $$\sum_{n=0}^\infty b_3(2(pn +\alpha)) q^n \equiv \sum_{n=0}^\infty b_3(2n) q^{pn}.$$ 

As a consequence, for every prime $p$ for which the conjecture holds, congruences similar to those of Theorem \ref{b3thm} hold as part of infinite families of congruences modulo increasing powers of $p^2$.
\end{conjecture}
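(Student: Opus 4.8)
The plan is to realize the even part $A(q):=\sum_{n\ge 0}b_3(2n)q^n$, reduced modulo $2$, as a Hecke eigenform with complex multiplication, and then to read off the claimed self-similarity from the vanishing of a single Hecke eigenvalue. First I would produce a clean modular description of $A$ modulo $2$. Starting from $B_3=f_3/f_1$ and the congruence $f_1^2\equiv f_2$, I would compute the $2$-dissection of $f_3/f_1$ and isolate its even part, aiming to express $A$ modulo $2$ as (a substitution of) a weight-one eta-quotient whose reduction is a binary theta series. The pentagonal number theorem, which gives $f_1\equiv\sum_j q^{((6j-1)^2-1)/24}$ and the analogous expansion of $f_3$, already shows that $q^{1/6}f_1f_3=\eta(\tau)\eta(3\tau)$ is the theta series of $x^2+3y^2$, a weight-one form with CM by $\Q(\sqrt{-3})$; passing to the \emph{even} coefficients brings in the prime $2$ and, I expect, produces a theta series of discriminant $-24$, i.e. with CM by $\Q(\sqrt{-6})$. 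A direct check confirms that the primes inert in $\Q(\sqrt{-6})$, i.e. those with $\left(\tfrac{-6}{p}\right)=-1$, are exactly $p\equiv 13,17,19,23\pmod{24}$, matching the primes singled out in Theorem~\ref{b3thm}.

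Second, I would translate the target identity into Hecke language. Writing $g$ for the weight-one CM form with $g\equiv A$ modulo $2$ after the normalizing substitution, the plain operator $U_p$ on $g$ corresponds to $A\mapsto\sum_n b_3(2(pn+\alpha))q^n$, with the shift $\alpha\equiv -24^{-1}\pmod{p^2}$ forced by centering on the class $24m+1\equiv 0\pmod p$, while $V_p$ corresponds to $A\mapsto A(q^p)$. In weight one the Hecke relation reads $T_p\equiv U_p+V_p\pmod 2$ (there is no $p^{k-1}$ factor, since $k=1$), so the desired identity $U_pg\equiv V_pg$ is equivalent to $T_pg\equiv 0\pmod 2$. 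For a CM form by an imaginary quadratic field the $p$-th eigenvalue vanishes whenever $p$ is inert, because there is no ideal of norm $p$; heuristically this gives $T_pg\equiv 0$ for every $p\equiv 13,17,19,23\pmod{24}$, and since these primes have density $\tfrac12$, the conjectured self-similarity for a positive proportion of $p$ (including exactly those of Theorem~\ref{b3thm}).

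Third, the ``as a consequence'' clause would follow by bootstrapping. Granting the identity for a fixed inert $p$, I would substitute $q\mapsto q^p$ and re-apply the operators, passing from modulus $p^2$ to $p^4,p^6,\dots$; combining at each stage with the vanishing on the residues $k=1,\dots,p-1$ already supplied by Theorem~\ref{b3thm} produces even arithmetic progressions modulo $2p^{2j}$ for all $j\ge 1$, i.e. the advertised infinite families.

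The hard part is making Steps~1 and~2 rigorous \emph{as an exact identity of power series}, and this is precisely why the statement remains conjectural. The identity $T_pg\equiv 0$ asserts more than the vanishing proven in Theorem~\ref{b3thm} (which only locates where the coefficients are zero, the residues $k=1,\dots,p-1$): it asserts the positive \emph{reproduction} $b_3(2(p^2n+\alpha))\equiv b_3(2n)$ on the remaining class $k=0$, i.e. that $g$ is a genuine Hecke eigenform modulo $2$. The theory of weight-one forms modulo $2$ is notoriously delicate --- there is no clean Hecke stability, multiplicity one can fail, and the reduction need not be pinned down by finitely many coefficients --- so one cannot simply invoke ``$g$ is a mod-$2$ CM eigenform,'' which is exactly the step that would upgrade the heuristic to a theorem. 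One expects (and numerics confirm) the reproduction for all inert primes, but a proof requires verifying the exact eta/theta identity case by case, and extending it uniformly to all $p>3$ appears out of reach; recording this gap is the content of the conjecture.
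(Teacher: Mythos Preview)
The statement is a conjecture, not a theorem: the paper does not prove it, and you correctly recognize this, explicitly naming the obstacle (``this is precisely why the statement remains conjectural''). There is therefore no proof in the paper to compare against; the paper only verifies the single instance $p=13$ (Theorem~\ref{threesim}).

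Your heuristic framework---realize the even part of $B_3$ modulo $2$ as a weight-$1$ CM form by $\Q(\sqrt{-6})$, then use the vanishing of the Hecke eigenvalue at inert primes to obtain $U_p g\equiv V_p g$---is a sound explanation for why the conjecture should hold for $p\equiv 13,17,19,23\pmod{24}$, and your Step~3 (bootstrapping to higher powers of $p$) is correct and matches exactly how the paper derives the infinite family in the $p=13$ case. One technical wrinkle worth flagging: from the paper's dissection, $\sum_{n\ge 0} b_3(2n)q^n\equiv f_1^4/f_3$, which as an eta-quotient has weight $3/2$, not $1$. So your ``weight-one eta-quotient'' claim requires an intermediate step---either exhibiting a genuine weight-$1$ form congruent to $f_1^4/f_3$ modulo $2$, or carrying out the Hecke argument in half-integral weight, where the operator $T_{p^2}$ rather than $T_p$ is the natural object. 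You acknowledge that Step~1 is where the difficulty lies, but the weight discrepancy deserves to be named explicitly.

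For the verified case $p=13$, the paper's route is purely computational and quite different from yours: it constructs two explicit modular forms of weight $5$ and level $117$, applies $T_{13}$ to one of them, and checks the congruence coefficient-by-coefficient up to the Sturm bound. This bypasses the eigenform question entirely but gives no conceptual reason the identity should hold, and must be redone for each prime separately. Your CM heuristic, if it could be made rigorous, would handle all inert primes at once---which is precisely the content of the conjecture.
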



For instance, one specific case of Conjecture \ref{threeconj} is the following (corresponding to $p=13$): 
\begin{theorem}\label{threesim} It holds that $$\sum_{n=0}^\infty b_3(26n+14) q^n \equiv \sum_{n=0}^\infty b_3(2n) q^{13n},$$ and therefore $$b_3(2 \cdot 13^2 n  +40) \equiv 0,$$ and by iteration, $$b_3 \left( 2 \cdot 13^{2k} n + 13^{2k-2} \cdot 40 + 14 \left( (13^{2k-2} - 1)/168 \right) \right) \equiv 0$$
for all $k\ge 1$.
\end{theorem}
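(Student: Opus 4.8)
The plan is to obtain Theorem~\ref{threesim} from the modular description of $\sum_{n\ge 0}b_3(2n)q^n$ modulo $2$ that underlies Theorem~\ref{b3thm}. That argument produces an eta-quotient $g(q)$ with $\sum_{n\ge 0}b_3(2n)q^n\equiv g(q)\pmod 2$ and identifies $g$, modulo $2$, with (a twist of) a CM modular form attached to the imaginary quadratic field $\Q(\sqrt{-6})$ of discriminant $-24$; this is the source of the $q^{1/24}$-type normalization, hence of the residue $-24^{-1}$ and of the denominator $168=24\cdot 7=13^2-1$ in the statements. The only input I need is one eigenvalue: the form that $g$ reduces to mod $2$ has $13$-th Hecke eigenvalue $0$. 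Structurally this is automatic, because $13$ is inert in $\Q(\sqrt{-6})$ — equivalently $\bigl(\tfrac{-6}{13}\bigr)=-1$, which is exactly why $13$ lies in the class $13,17,19,23\pmod{24}$ of Theorem~\ref{b3thm} — so the CM form has vanishing $13$-th Fourier coefficient, and the reduction mod $2$ inherits this. Since $13\nmid 24$, the $q^{1/24}$-twist is unramified at $13$ and this vanishing is undisturbed.

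The second and main step is to convert this vanishing into the self-similarity identity. Writing the Hecke relation at $13$ for $g$ and reducing mod $2$ — using that $13$ is odd, so the power of $13$ multiplying the $q\mapsto q^{13}$ term is a unit modulo $2$ — the vanishing eigenvalue collapses the relation to a congruence of the form ``the subseries of $g$ supported in a fixed residue class modulo $13$, re-indexed, is congruent mod $2$ to $g(q^{13})$''. The residue is forced by the $q^{1/24}$-normalization to be the one annihilating $24n+1$ modulo $13$, namely $-24^{-1}\equiv 7$; since $b_3\bigl(2(13n+7)\bigr)=b_3(26n+14)$, this is exactly $\sum_{n\ge 0}b_3(26n+14)q^n\equiv\sum_{n\ge 0}b_3(2n)q^{13n}$. (One can also argue without invoking Hecke operators explicitly: from the CM description the support of $g$ mod $2$ sits inside the set of $n$ for which $24n+1$ has a constrained multiplicative shape tied to $\Q(\sqrt{-6})$, and since $13\,\|\,\bigl(24(13n+7)+1\bigr)$ unless $13\mid n$ — in which case the extra factor $13^2$ can be absorbed — the same identity drops out by bookkeeping the power of $13$ dividing $24n+1$.) Getting the structural input from Theorem~\ref{b3thm} into a form precise enough to run either version of this argument — i.e.\ knowing the weight, level and nebentypus of $g$ well enough to pin down the $13$-th eigenvalue and the correct shift — is the part that carries the real content and is the main obstacle.

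The rest is formal. Extracting from $\sum_n b_3(26n+14)q^n\equiv\sum_n b_3(2n)q^{13n}$ the coefficient of $q^{13n+1}$ kills the right-hand side (since $13\nmid 13n+1$) and gives $b_3\bigl(2\cdot13^2 n+40\bigr)\equiv 0$; extracting instead the coefficient of $q^{13n}$ gives the self-map $b_3\bigl(2(13^2 n+7)\bigr)\equiv b_3(2n)$, valid for all $n$. Iterating the self-map $k-1$ times yields $b_3\bigl(2\bigl(13^{2(k-1)}n+7\cdot\tfrac{13^{2(k-1)}-1}{168}\bigr)\bigr)\equiv b_3(2n)$, and substituting $n\mapsto 13^2 n+20$ — for which $b_3(2(13^2 n+20))=b_3(338n+40)\equiv 0$ by the $k=1$ congruence just obtained — gives precisely $b_3\bigl(2\cdot13^{2k}n+13^{2k-2}\cdot 40+14\cdot\tfrac{13^{2k-2}-1}{168}\bigr)\equiv 0$, the stated family. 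As a preliminary check one should verify both displayed identities against a table of $b_3(n)$ up to the Sturm bound of the ambient space of modular forms; because both sides are congruent mod $2$ to forms of bounded weight and level, such a finite verification in fact upgrades to a complete proof of the $q$-series identity once that space has been identified in Step~1.
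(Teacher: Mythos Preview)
Your sketch has a real gap at the step you yourself flag as ``the main obstacle.'' You assert that the argument behind Theorem~\ref{b3thm} identifies $\sum_n b_3(2n)q^n$ modulo $2$ with a CM Hecke eigenform for $\Q(\sqrt{-6})$, and then use the vanishing of its $13$-th Hecke eigenvalue to derive the self-similarity. But the proof of Theorem~\ref{b3thm} does not do this: it only shows
\[
\sum_{n\ge 0} b_3(2n)q^n \equiv \frac{f_1^4}{f_3} \equiv f_1\Bigl(1+\sum_{m\in\Z} q^{(3m-1)^2}\Bigr),
\]
and then reads off lacunarity (Landau) and the zero progressions by elementary quadratic-residue bookkeeping. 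No eigenform, no multiplicativity, no Hecke recursion is established there. In particular, knowing merely that $b_3(2n)\equiv 0$ whenever $13\,\|\,(24n+1)$ (which is Theorem~\ref{b3thm} for $p=13$) does not by itself imply the \emph{nontrivial} half of the self-similarity, namely $b_3(2(169m+7))\equiv b_3(2m)$; for that you genuinely need the eigenvalue relation $a(13n)+\chi(13)13^{k-1}a(n/13)=0$, and you have not shown that any form congruent to $f_1^4/f_3$ satisfies it. Your alternative ``bookkeeping the power of $13$'' argument has the same gap: a constraint on the \emph{support} of $g$ mod $2$ does not give a relation between values at $n$ and at $n/169$.

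The paper's proof is precisely the Sturm-bound verification you mention only as a closing remark. Concretely, it sets
\[
C_1(q)=q^6\,\frac{f_1^4}{f_3}\,f_{39}^2 f_{13}^5,\qquad
C_2(q)=q\,\frac{f_{13}^4}{f_{39}}\,f_3^2 f_1^5,
\]
checks via Theorem~\ref{ghn} and Theorem~\ref{ligozat} that $C_1$, $C_1\vert T_{13}$, and $C_2$ are all holomorphic forms of weight $5$, level $117$, and character $\left(\tfrac{-39}{\cdot}\right)$, computes the Sturm bound $70$, and verifies the coefficient congruence to that point. Finding the auxiliary factors $f_{39}^2 f_{13}^5$ and $f_3^2 f_1^5$ that push everything into a common integer-weight holomorphic space, and actually computing the bound and checking the coefficients, is the substance you have left undone. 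Your iteration in the final paragraph is correct and matches the paper. If your CM-eigenform identification \emph{could} be made rigorous, it would be a more conceptual route and would likely settle all of Conjecture~\ref{threeconj} at once rather than one prime at a time; but as written it is a heuristic, not a proof.
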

{\ }\\
\noindent \emph{Case $m = 9$:}

\begin{theorem}\label{b9thm} For $p \in \{17, 19, 37\}$, let $\alpha \equiv -3^{-1} \pmod{2p}$, $0 < \alpha < 2p$, and $\beta = \lfloor \frac{2p}{3} \rfloor$. Then $$\sum_{n=0}^\infty b_9(2pn + \alpha) q^n \equiv q^{\beta} \sum_{n=0}^\infty b_9(2n+1)q^{pn}.$$
\end{theorem}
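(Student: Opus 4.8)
The plan is to rephrase the claimed congruence as a self-similarity for a single power series, to recognize both of its sides as reductions modulo $2$ of modular forms, and to finish with a Sturm-type bound; the Hecke-operator picture will explain, conceptually, why only the three listed primes occur.

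Write $g(q):=\sum_{n\ge0}b_9(2n+1)q^n$. The hypothesis $\alpha\equiv-3^{-1}\pmod{2p}$ forces $\alpha$ to be odd, so put $r:=(\alpha-1)/2$; then $2pn+\alpha=2(pn+r)+1$, hence $b_9(2pn+\alpha)$ is the $(pn+r)$-th coefficient of $g$, and the theorem becomes exactly
$$\sum_{n\ge0}\big([q^{pn+r}]\,g\big)q^n\ \equiv\ q^{\beta}\,g(q^p)\pmod2.$$
Equating coefficients of each power of $q$, this splits into the two assertions (i) $[q^{pn+r}]\,g\equiv0$ whenever $n\not\equiv\beta\pmod p$, and (ii) $[q^{p^2k+p\beta+r}]\,g\equiv[q^k]\,g$ for all $k\ge0$ — precisely the pattern characteristic of $g\bmod2$ (suitably twisted) lying in the kernel of the half-integral weight Hecke operator $T_{p^2}$, equivalently having Hecke eigenvalue $\equiv0$ at $p$. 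A short bookkeeping computation using $\alpha\equiv-3^{-1}\pmod{2p}$ simultaneously pins down $r$ and $\beta$: concretely $p\beta+r$ must equal $2(p^2-1)/3$, whence $\beta=\lfloor2p/3\rfloor$. So it remains to prove (i) and (ii).

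To make this workable, I would first use the $2$-dissection of $B_9=f_9/f_1$ — essentially that of Xia and Yao \cite{xiayao9}, and in any case recoverable from the standard $2$-dissection of $1/f_1$ together with $f_k^2\equiv f_{2k}$ and the substitution $q\mapsto q^9$ — to write $g\equiv H\pmod2$ for an explicit eta-quotient $H$ of positive weight and small level. Both sides of the displayed congruence are then reductions modulo $2$ of holomorphic modular forms of a common weight on a common $\Gamma_0(N)$: the right-hand side is $q^\beta$ times $V_pH$, and the left-hand side is obtained from $H$ by extraction of an arithmetic progression, that is, by a composition of $U_p$, $V_p$, and a character twist, each of which preserves modularity modulo $2$ at the cost of a controlled enlargement of the level (the relevant facts being assembled in Section \ref{back}); if necessary, one multiplies through by a fixed eta-product to reach integral weight and ensure holomorphy. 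By Sturm's bound, the congruence — and hence (i) and (ii) — reduces to a finite coefficient check, carried out separately for each $p\in\{17,19,37\}$; more conceptually, one instead verifies directly that the $p$-th Hecke eigenvalue of the relevant eigenform attached to $H\bmod2$ is even.

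The reason only $p=17,19,37$ appear here, rather than an entire congruence class of primes as in the $m=3$ case of Theorem \ref{b3thm}, is that the mod-$2$ form $H$ for $m=9$ is not a single Hecke eigenform but a combination, so the governing eigenvalue is even only sporadically among small primes; heuristically it is controlled by a Legendre-symbol (splitting) condition on $p$, the analogue of the condition $(-51/p)=-1$ in the known $m=17$ results, and a clean infinite statement is at present only conjectural. The main obstacle is accordingly twofold: first, identifying the eta-quotient $H$ explicitly while keeping the level $N$ small enough that the Sturm verification is feasible — the level forced naively by $f_9$, the $U_{2p}$-sieve, and the character twist is large, so one must exploit reductions such as $f_9/f_1\equiv f_1f_9/f_2$ and $f_k^2\equiv f_{2k}$ to bring it down; and second, carrying out the residue bookkeeping cleanly enough that $\beta=\lfloor2p/3\rfloor$ drops out and that $17$, $19$, and $37$ — and, among small primes, only these — are confirmed to satisfy the required arithmetic condition.
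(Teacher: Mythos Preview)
Your approach is essentially the paper's: use the Xia--Yao dissection to identify $\sum_{n\ge0} b_9(2n+1)q^n \equiv f_1 f_9^2/f_3$, multiply by a suitable power of $f_p$ to land in an integral-weight space on $\Gamma_0(27p)$ where $T_p$ acts as $U_p$ (since $\chi(p)=0$), and then verify the congruence against the $p$-magnified form via Sturm's bound --- the paper supplies the explicit parameters $(d_p,r_p,j_p)$ and, for $p=17$, the bound $486$. One correction to your side commentary: the paper does \emph{not} regard the three primes as sporadic, but rather conjectures (Conjecture~\ref{b9conj}) that the identity holds for \emph{all} primes $p\equiv\pm1\pmod 9$ --- a straight congruence condition, not a Legendre-symbol one --- and $17,19,37$ are simply the smallest such primes, chosen because the Sturm computation remains feasible.
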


An example of this theorem (corresponding to $p=19$) is that $$\sum_{n=0}^\infty b_9(38n+25)q^n \equiv q^{12} \sum_{n=0}^\infty b_9(2n+1)q^{19n}.$$

Because $-3^{-1} \pmod{2p$} is always an odd integer, note that Theorem \ref{b9thm} implies several self-similarities between the sequence $\{b_9(2n+1)\}$ and certain of its subprogressions (see Corollary \ref{b9cor}).

In fact, we conjecture that an analogous phenomenon is true in a much greater level of generality: 
\begin{conjecture}\label{b9conj} Theorem \ref{b9thm} and Corollary \ref{b9cor} hold for all primes $p \equiv \pm 1 \pmod{9}$.
\end{conjecture}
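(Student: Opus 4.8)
The plan is to trade the coefficient identity for a statement about a single weight-$\tfrac12$ modular form modulo $2$, and then to show that the arithmetic condition $p\equiv\pm1\pmod9$ is exactly what forces the relevant Hecke relation. First I would reduce $B_9$ modulo $2$: since $f_1^2\equiv f_2$, we have $B_9=f_9/f_1\equiv f_1f_9/f_2\pmod2$, and in terms of the Dedekind eta function this is, up to a rational power of $q$, the quotient $\eta(\tau)\eta(9\tau)/\eta(2\tau)$, of weight $\tfrac12+\tfrac12-\tfrac12=\tfrac12$. Writing $g(q):=\sum_{n\ge0}b_9(2n+1)q^n$, the two-dissection isolating the odd-index coefficients expresses $g$ (again modulo $2$, up to a power of $q$) as a weight-$\tfrac12$ form as well, since restricting a unary theta series to an arithmetic progression of exponents produces another such series.

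Next I would invoke the Serre--Stark basis theorem to identify this reduction with a finite $\mathbb{F}_2$-combination of unary theta series, i.e. with $\sum_{n\in S}q^{Q(n)}$ summed over $n$ in finitely many residue classes modulo a modulus $N$ divisible by $9$ (the $9$ entering through the exponent scaling in $\eta(9\tau)$, which is also the source of the $3^{-1}$ appearing in $\alpha$). Once $g\equiv\sum_{n\in S}q^{Q(n)}\pmod2$ is known explicitly, the self-similarity becomes transparent: the left side $\sum_n b_9(2pn+\alpha)q^n$ is the extraction of the progression $pn+(\alpha-1)/2$ from $g$, a shifted $g\mid U_p$, while the right side $q^\beta\sum_n b_9(2n+1)q^{pn}$ is $q^\beta\,(g\mid V_p)$. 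For a theta series these two operators are intertwined by the action of multiplication by $p$ on the square-root residues in $S$; the relation $g\mid U_p \equiv q^{\beta}\,(g\mid V_p)$ holds precisely when this multiplication fixes $S$ up to the sign that is invisible modulo $2$, which is exactly $p\equiv\pm1\pmod9$. Equivalently, it is the vanishing modulo $2$ of the $T_{p^2}$-eigenvalue of the theta series, governed by the splitting behavior of $p$ in the field attached to $Q$.

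Finally I would match the bookkeeping — that the shift in the $U_p$-extraction is $(\alpha-1)/2$ with $\alpha\equiv-3^{-1}\pmod{2p}$, and that the overall shift $q^\beta$ aligning the two sides has $\beta=\lfloor 2p/3\rfloor$ — by comparing the minimal exponents on the two sides of the theta identity, which is a short residue computation. The main obstacle is the first step combined with uniformity in $p$: one must prove that $g$ is \emph{exactly} equal (modulo $2$, to all orders) to an explicit unary theta series, not merely that the two agree up to some bound. This is precisely what separates the conjecture from Theorem \ref{b9thm}: for the three primes $p\in\{17,19,37\}$ the identity can be certified by checking equality of $q$-expansions up to a Sturm bound, but no finite computation can cover all $p\equiv\pm1\pmod9$. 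The genuinely structural input needed is the closed-form theta identity for $g\pmod2$; granting it, the passage to all $p\equiv\pm1\pmod9$, and hence to Corollary \ref{b9cor}, follows uniformly from the $U_p$--$V_p$ mechanism above.
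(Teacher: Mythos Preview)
The statement you are attempting to prove is Conjecture \ref{b9conj}, which the paper explicitly leaves open; there is no proof in the paper to compare against. The paper establishes only the three cases $p\in\{17,19,37\}$ of Theorem \ref{b9thm}, each by a separate Sturm-bound verification, and then records the general statement as a conjecture.

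Your proposal is not a proof but a conditional outline, and you say as much: the entire argument rests on ``granting'' an explicit identification of $g(q)=\sum_n b_9(2n+1)q^n$ modulo $2$ with a unary theta series $\sum_{n\in S}q^{Q(n)}$. That identification is precisely the missing content. Several concrete obstacles stand in the way of filling it in along the lines you suggest. First, the weight-$\tfrac12$ eta-quotients you write down are not modular forms for any $\Gamma_0(N)$ in the sense of Theorem \ref{ghn}: for instance $f_1 f_{18}/f_3$, which is congruent to $g$ modulo $2$, has $q$-power shift $(1+18-3)/24=2/3$, so the Gordon--Hughes--Newman conditions fail and you cannot directly invoke Serre--Stark. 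Second, even if you produce a genuine holomorphic weight-$\tfrac12$ lift and apply Serre--Stark over $\mathbb{C}$, the result is an integral linear combination of possibly many theta series of various levels; you would still need to show that its mod-$2$ reduction collapses to a form supported on residue classes modulo a modulus whose unit group action by $p$ is governed exactly by $p\bmod 9$. You assert this (``which is exactly $p\equiv\pm1\pmod9$'') without having determined $S$, so the claim is unsupported. Third, your reasoning that $g$ is weight $\tfrac12$ ``since restricting a unary theta series to an arithmetic progression of exponents produces another such series'' is circular: it presupposes that $B_9\pmod 2$ is already a unary theta series, which is what you are trying to establish.

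It is worth noting that the paper uses exactly this kind of elementary theta-series analysis for $b_3(2n)$, $b_{21}(4n)$, $b_{21}(4n+1)$, and $b_{25}$, deriving explicit quadratic-form representations and reading off congruences from them. The fact that the authors did \emph{not} do so for $b_9(2n+1)$, resorting instead to per-prime Sturm computations and leaving the general case conjectural, is circumstantial evidence that the closed-form theta identity you need is not readily available by these methods.
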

{\ }\\
\noindent \emph{Case $m=19$:}

Some congruences of the form $b_{19}(38n+k) \equiv 0 $ were previously known \cite{RaduSellers}.  We have the following new family: 
\begin{theorem}\label{b19thm} It holds that $$\sum_{n=0}^\infty b_{19}(10n+8) q^n \equiv q \sum_{n=0}^\infty b_{19}(2n)q^{5n}.$$  This implies that $$b_{19}(50n + 10k + 8) \equiv 0 $$
for all $k \not\equiv 1 \pmod{5}$.  By iteration, $$b_{19}(2n) \equiv b_{19}\left(2 \cdot 5^{2d} n + 9 ((5^{2d}-1)/24) \right),$$ and therefore $$b_{19} \left( 2 \cdot 5^{2d} (50n+10k+8) + 9 ( (5^{2d}-1)/24) \right) \equiv 0, $$ for all $d,k\ge 1$ with  $k \not\equiv 1 \pmod{5}$.
\end{theorem}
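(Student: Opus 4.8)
The plan is to work throughout modulo $2$ and to turn the asserted generating-function congruence into an identity between $q$-expansions of modular forms, which is then certified by a finite computation. First I would reduce $B_{19}$ modulo $2$ and isolate its even part. Using $f_k^{2}\equiv f_{2k}$ one has $B_{19}=f_{19}/f_1\equiv f_1f_{19}/f_2$, and the eta-product expansion (Euler's pentagonal number theorem with the signs dropped) gives $f_1\equiv\sum_{a\ge 1,\ \gcd(a,6)=1}q^{(a^{2}-1)/24}$ and likewise for $f_{19}$. Since $f_2^{-1}$ is supported on even exponents, the even-index coefficients of $B_{19}$ come exactly from the pairs $(a,b)$ with $\gcd(ab,6)=1$ for which $\bigl((a^{2}-1)+19(b^{2}-1)\bigr)/24$ is even, a condition one checks amounts to $ab\equiv\pm 1\pmod 8$. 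Dividing the exponents by $2$ then yields the explicit description
$$C(q):=\sum_{n\ge 0}b_{19}(2n)q^{n}\ \equiv\ \frac{\Theta(q)}{f_1},\qquad \Theta(q):=\sum_{\substack{a,b\ge 1,\ \gcd(ab,6)=1\\ ab\equiv\pm 1\ (8)}}q^{(a^{2}+19b^{2}-20)/48},$$
where $\Theta$ is a combination of binary theta series attached to $\eta(\tau)\eta(19\tau)$. Writing the $5$-dissection $C(q)=\sum_{r=0}^{4}q^{r}D_r(q^{5})$ with $D_r(q)=\sum_{n\ge 0}b_{19}(2(5n+r))q^{n}$, the theorem is exactly the assertion $D_4(q)\equiv q\,C(q^{5})$.

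The heart of the argument is therefore a $5$-dissection of $\Theta/f_1$. One studies the exponent $E(a,b)=(a^{2}+19b^{2}-20)/48$ modulo $5$: since $48^{-1}\equiv 2$ and $19\equiv 4\pmod 5$ one gets $E(a,b)\equiv 2a^{2}+3b^{2}\pmod 5$, so the residue of $E$ modulo $5$ depends only on $a$ and $b$ modulo $5$, and the class $E\equiv 4\pmod 5$ — the relevant one, since $24^{-1}\equiv 4\pmod 5$ — is cut out by explicit congruence conditions on $a,b$. Combining this with the classical $5$-dissection of $1/f_1$ (or its mod-$2$ simplification via $1/f_1\equiv f_1/f_2$), I would exhibit an involution on the index set of $\Theta$ that identifies the $\equiv 4\pmod 5$ part of $\Theta/f_1$ with $q$ times the $5$-dilation $q\mapsto q^{5}$ of $\Theta/f_1$ itself, while checking that the residue classes $\not\equiv 1\pmod 5$ contribute nothing to $D_4$. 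Granting $D_4(q)\equiv q\,C(q^{5})$, the remaining statements are formal: $b_{19}(10n+8)\equiv 0$ for $n\not\equiv 1\pmod 5$ records the vanishing of the ``wrong'' classes inside $D_4$, and iterating the self-similarity $d$ times gives $b_{19}(2n)\equiv b_{19}\!\bigl(2\cdot 5^{2d}n+9\cdot\tfrac{5^{2d}-1}{24}\bigr)$, hence the claimed families modulo $5^{2d}$.

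The main obstacle is precisely this $5$-dissection: carried out by hand it is delicate, because $\Theta/f_1$ is a quotient and one must track how the theta exponents interact with the $5$-dissection of $1/f_1$. The clean rigorous route I would actually follow is to clear the denominator — multiply the difference $D_4(q)-q\,C(q^{5})$ by a suitable power of $\eta$ so that both terms become holomorphic modular forms on $\Gamma_0(N)$ for an explicitly computed level $N$ built from $2$, $3$, $5$ and $19$ — reduce modulo $2$, and invoke Sturm's bound: it then suffices to verify agreement of the $q$-expansions up to the resulting (modest, explicit) bound, a finite check. Determining $N$ and the weight so that this bound is both valid and small is the one genuinely technical step; everything after it is bookkeeping.
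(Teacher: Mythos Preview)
Your plan ends up in essentially the same place as the paper's proof --- a Sturm-bound verification --- but there are two points where your expectations diverge from what actually happens.

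First, the elementary $5$-dissection you sketch is not carried out in the paper, and the authors explicitly note that they ``do not have a convenient even-odd dissection for $b_{19}$.'' Your expression $C(q)\equiv\Theta(q)/f_1$ with $\Theta$ a restricted binary theta series is correct, but the proposed involution matching the $\equiv 4\pmod 5$ piece of $\Theta/f_1$ with $q\cdot(\Theta/f_1)(q^5)$ is genuinely speculative: you have not said what the involution is, and the interaction of the theta congruence classes with the $5$-dissection of $1/f_1$ does not visibly collapse in the way it does for, say, $m=3$ or $m=21$. So this part is at best a heuristic, and the paper does not attempt it.

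Second, your fallback --- clear denominators, produce holomorphic forms, apply Sturm --- is exactly what the paper does, but your description of the bound as ``modest'' is off by orders of magnitude. The paper multiplies $f_{19}/f_1$ by $f_5^{246}$ (resp.\ $f_{125}^{246}$) to force holomorphicity, then applies $T_2$ and $V_5$ (resp.\ $T_2$ and $T_5$) to extract the two sides; the resulting comparison lives in weight $123$ on $\Gamma_0(2\cdot 5^3\cdot 19)$, giving a Sturm bound of $92250$. Because of the Hecke operators this requires values of $b_{19}(n)$ for $n$ up to roughly $9\cdot 10^5$, a computation the authors report took several days. So the step you call ``the one genuinely technical step'' of choosing $N$ and the weight is not about making the bound small --- it cannot be made small with these tools --- but about making the forms holomorphic at all, and the verification is a substantial computation rather than a quick check.
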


The next examples up in this family are the progressions $b_{19}(1250n + j) \equiv 0 $ for $j \in \{ 218, 718, 968, 1218 \}$.

Similarly to the situation we encountered with $b_9$, we conjecture that the previous theorem is the first case (corresponding to the prime $p=5$) of an infinite family, even though here we were unable to characterize the primes involved. We have:
\begin{conjecture}\label{b19conj} For a prime $p>3$, let $\alpha \equiv -3 \cdot 8^{-1} \pmod{p}$, $0 < \alpha < p$, and set $\beta = \lfloor \frac{3p}{8} \rfloor$.  Then, for a positive proportion of primes $p$, it holds that: $$\sum_{n=0}^\infty b_{19}(2pn + 2\alpha) q^n \equiv q^\beta \sum_{n=0}^\infty b_{19}(2n) q^{pn} .$$
\end{conjecture}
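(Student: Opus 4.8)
The plan is to realize $A(q) := \sum_{n\ge 0} b_{19}(2n)q^n$ modulo $2$ as a weight-$1/2$ modular form and to read off the conjectured self-similarity from the action of the half-integral-weight Hecke operators $T_{p^2}$. First I would clear the denominator modulo $2$ using $1/f_1 \equiv f_1/f_2$, so that $B_{19} = f_{19}/f_1 \equiv f_1 f_{19}/f_2 \pmod 2$. In terms of the Dedekind eta function this is $q^{-3/4}\,\eta(\tau)\eta(19\tau)/\eta(2\tau)$, an eta-quotient of weight $(1+1-1)/2 = 1/2$. The fractional exponent $3/4$ is exactly what produces, after passage to the even-index subseries and the substitution $q^2 \mapsto q$ (which halves the shift to $3/8$), the constants $\alpha \equiv -3\cdot 8^{-1}\pmod p$ and $\beta = \lfloor 3p/8 \rfloor$ in the statement; tracking this normalization shift is routine bookkeeping I would carry out once and for all.

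By the Serre--Stark basis theorem, every holomorphic modular form of weight $1/2$ is a linear combination of unary theta series $\theta_{\psi,t}(\tau) = \sum_{n} \psi(n)\, q^{t n^2}$. Hence, modulo $2$, I expect $A(q)$ to be congruent to an explicit finite sum of such theta series (supported on values of quadratic polynomials), which at once explains both the lacunary-but-structured behavior of $b_{19}(2n)$ and the appearance of $p^2$ in the iterated congruences of Theorem \ref{b19thm}. The conjectured identity $\sum_n b_{19}(2pn+2\alpha)q^n \equiv q^\beta \sum_n b_{19}(2n) q^{pn}$ is precisely the assertion that $A$ is, modulo $2$, a $T_{p^2}$-eigenform whose $U_{p^2}$-part vanishes off the single residue class fixed by the shift while its $V_{p^2}$-part reproduces $A$ itself. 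For a single such theta series this is automatic from the multiplicativity of $\psi$; for a genuine sum of several components it holds only when the components transform compatibly modulo $2$, i.e. when certain splitting conditions on $p$, of the type $(\,\cdot\,/p)=\pm1$ recorded for other $m$ in the introduction, are simultaneously satisfied.

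I would then establish positivity of the proportion of admissible primes by a Chebotarev argument: the simultaneous splitting conditions cut out a union of Frobenius classes in a fixed abelian extension, whose conductor divides a fixed multiple of $8\cdot 19$ and of the moduli $t$ occurring in the theta decomposition, so the set of good $p$ has a well-defined positive density. The base case $p=5$ of Theorem \ref{b19thm} is the smallest prime meeting these conditions and serves as a consistency check; it can be proved independently by verifying the corresponding $5$-dissection as an identity of modular forms of a fixed level up to the Sturm bound, a finite computation that does not, however, generalize uniformly in $p$ and is exactly what the Hecke-operator formulation is designed to bypass.

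The main obstacle is the second step: proving, as an exact identity rather than numerically, that $A(q)\bmod 2$ equals a specific combination of weight-$1/2$ theta series, and then determining the precise combined splitting condition. If $A$ reduced to a single theta series the self-similarity would hold for all $p$ coprime to the conductor, so the restriction to a positive proportion signals that $A\bmod 2$ is a nontrivial sum whose mod-$2$ Hecke module carries extra structure; identifying that module, and hence the exact density and the exact congruence classes of admissible $p$, is what the authors report they were unable to do in closed form. Until the theta decomposition of $f_1 f_{19}/f_2 \bmod 2$ is made fully explicit, the Chebotarev input yields only that the density is positive, which is all the conjecture claims but stops short of characterizing the primes.
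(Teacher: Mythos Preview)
The statement you are addressing is a \emph{conjecture}, not a theorem: the paper does not prove it. The only instance established is $p=5$ (Theorem~\ref{b19thm}), and that is done by a brute-force Sturm-bound verification with no structural input, so there is no ``paper's own proof'' to compare against; the authors explicitly leave both the general statement and the characterization of the admissible primes open.

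Your proposed route has a concrete obstruction at its first step. You lift $B_{19}\pmod 2$ to the weight-$1/2$ eta-quotient $\eta(z)\eta(19z)/\eta(2z)$ and then invoke Serre--Stark. But that eta-quotient is \emph{not holomorphic}: by Ligozat's formula (Theorem~\ref{ligozat}) on $\Gamma_0(38)$ its order at the cusp $1/2$ is
\[
\frac{38}{24}\left(\frac{1}{2}-1+\frac{1}{38}\right)\;=\;-\frac{3}{4}\;<\;0,
\]
so Serre--Stark does not apply. Multiplying a mod-$2$ identity by $f_1^2/f_2\equiv 1$ changes the weight of the characteristic-zero lift, but it also changes the pole structure at the cusps; you cannot freely dial the weight down to $1/2$ while preserving holomorphicity. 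In other words, there is no reason to expect $\sum_n b_{19}(2n)q^n\pmod 2$ to be the reduction of a \emph{holomorphic} weight-$1/2$ form, and without that the unary-theta decomposition---and hence the $T_{p^2}$ eigenform claim and the Chebotarev density argument---have nothing to stand on.

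There is also circumstantial evidence against the picture you sketch. If a finite weight-$1/2$ theta decomposition existed, the admissible primes would be cut out by explicit simultaneous quadratic-residue conditions of exactly the kind you describe, just as happens for $m\in\{3,21,25\}$ in the paper. The authors report that for $m=19$, in contrast to those cases, they were \emph{unable} to characterize the primes; this is consistent with the object not being a finite sum of unary theta series modulo~$2$. So the obstacle is not, as you suggest at the end, merely ``making the theta decomposition explicit''; it is that the decomposition you are counting on need not exist at all.
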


Note that, by iteration, the congruences of Conjecture \ref{b19conj} imply the existence of infinite even families analogous to those of Theorem \ref{b19thm}.\\
{\ }\\
\noindent \emph{Case $m = 21$:}

We establish lacunarity modulo 2 for the sequences $b_{21}(4n)$ and $b_{21}(4n+1)$, and give two theorems identifying even progressions within these sequences: 
\begin{theorem}\label{21in40} If $p \equiv 19, 37, 47, 65, 85, 109, 113, 115, 137, 139, 143,$ or $167 \pmod{168}$ is prime, then $$b_{21}(4(p^2 n + kp - 5 \cdot 24^{-1})) \equiv 0 $$ for all $1 \leq k < p$, where $24^{-1}$ is taken modulo $p^2$.
\end{theorem}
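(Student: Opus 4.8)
The plan is, as in the treatment of $b_3(2n)$ leading to Theorem~\ref{b3thm}, to isolate $\sum_{n\ge 0}b_{21}(4n)q^n$ modulo $2$ as a power of $q$ times an explicit eta-quotient, to recognize that eta-quotient modulo $2$ as a Hecke eigenform with complex multiplication, and then to read off the congruences from the vanishing of one Hecke eigenvalue.

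\textbf{Step 1 (dissection).} Work throughout modulo $2$, where $f_1^2\equiv f_2$ (and more generally $f_{2^a m}/f_1\equiv f_m^{\,2^a}/f_1$), so that $B_{21}=f_{21}/f_1\equiv f_1f_{21}/f_2$. Expanding $f_1f_{21}$ by Euler's pentagonal number theorem and carrying out a two-stage $2$-dissection -- one stage more than was needed for $b_3(2n)$, since for $m=21$ one is pushed down to the progression $4n$ -- should produce an identity
\[
q^{-c}\sum_{n\ge 0}b_{21}(4n)q^n\ \equiv\ E(q)\pmod 2
\]
for an explicit eta-quotient $E$ in the $f_j$'s (together with a companion identity for $b_{21}(4n+1)$, used elsewhere). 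Since $\eta(21\tau)/\eta(\tau)=q^{5/6}B_{21}(q)$, the coefficient $b_{21}(4m)$ sits at $q^{(24m+5)/6}$, so $24m+5$ is the natural label of $b_{21}(4m)$ and the nonzero coefficients of $E$ lie in prescribed classes modulo $24$; this is the source of the shift $-5\cdot 24^{-1}$ in the statement.

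\textbf{Step 2 (complex multiplication and lacunarity).} After normalizing by $q^c$, I expect $E$ to reduce modulo $2$ to a finite $\mathbb F_2$-linear combination of theta series of positive-definite binary quadratic forms (of discriminants dividing $168$); equivalently, $E$ is, modulo $2$, the reduction of a form with complex multiplication. This at once yields the lacunarity of $b_{21}(4n)$ claimed above (alternatively, after the equivalences $f_{2^a m}/f_1\equiv f_m^{\,2^a}/f_1$ one checks that $E$ falls under the hypothesis of Theorem~\ref{cot}), and it makes $E$ a Hecke eigenform modulo $2$: for every prime $p$ not dividing the level -- in particular every $p$ coprime to $2\cdot 3\cdot 7$, which covers all primes in the statement -- there is $\lambda_p\in\mathbb F_2$ with $T_pE\equiv \lambda_pE$.

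\textbf{Step 3 (the Hecke eigenvalue; the crux).} The heart of the proof is the computation of $\lambda_p\bmod 2$: from the theta-series description of $E$, $\lambda_p$ is a combination of Legendre symbols $(\,\cdot/p)$ attached to the relevant discriminants, and the claim is that $\lambda_p\equiv 0$ exactly for the twelve residue classes of $p$ modulo $168$ in the statement (a union of cosets of $(\Z/168\Z)^\times$, i.e.\ a combined quadratic condition; note these classes are automatically coprime to $2,3,7$, so the Hecke operator at $p$ is the good one and $24^{-1}$ makes sense modulo $p^2$). Granting $\lambda_p\equiv 0$, write $E=\sum_N a(N)q^N$; then $a(p)=\lambda_p\equiv 0$, and together with the usual multiplicativity $a(p^{j}n)\equiv a(p^{j})a(n)$ for $p\nmid n$ this forces $a(N)\equiv 0$ whenever $p\,\|\,N$. (The Hecke recursion $a(p^{j+1})\equiv \lambda_p\,a(p^{j})-p^{w-1}a(p^{j-1})\equiv a(p^{j-1})\pmod 2$, $w$ the weight and $p$ odd, further gives $a(p^2N')\equiv a(N')$, which is what produces the iterated families in the neighboring theorems.)

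\textbf{Step 4 (unwinding) and the main obstacle.} Since the coefficient of $q^{(24m+5)/6}$ is $b_{21}(4m)$, the condition $p\,\|\,(24m+5)$ says $m\equiv -5\cdot 24^{-1}\pmod p$ but $m\not\equiv -5\cdot 24^{-1}\pmod{p^2}$; writing $m=p^2n+kp-5\cdot 24^{-1}$ with $24^{-1}$ taken modulo $p^2$, this is exactly $1\le k<p$, so Step 3 gives $b_{21}(4(p^2n+kp-5\cdot 24^{-1}))\equiv 0$, as asserted. The work -- and the risk -- is concentrated in Steps 1--3: the extra stage of $2$-dissection makes the intermediate eta-quotients genuinely delicate, and $E$ modulo $2$ may be a sum of several theta terms, in which case $\lambda_p$ must be shown to vanish on all of them \emph{simultaneously}; it is precisely this simultaneous vanishing that cuts the density from $1/2$ down to $1/4$ and pins the hypothesis to twelve classes modulo $168$ rather than to a single Legendre symbol. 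Checking that these are \emph{exactly} the right classes (neither too many nor too few) is the most error-prone point and should be confirmed numerically.
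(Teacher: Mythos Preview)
Your overall architecture is right --- dissect down to $b_{21}(4n)$, recognize the result as supported on values of binary quadratic forms, and extract the prime conditions from Legendre symbols --- but your execution differs substantially from the paper's, and in one place the proposal is shakier than you acknowledge.

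\textbf{Dissection.} The paper does not use $B_{21}\equiv f_1f_{21}/f_2$ and a direct pentagonal expansion. It factors $B_{21}=(f_3/f_1)(f_{21}/f_3)$ and feeds in the already-known $2$-dissections of $B_3$ (identity~(\ref{3reg})) and the $3$-magnification of $B_7$ (identity~(\ref{7reg})). This yields the clean two-term formula
\[
\sum_{n\ge 0} b_{21}(4n)\,q^n \equiv f_1^2 f_3 \;+\; q\,\Bigl(\tfrac{f_3^3}{f_1}\Bigr)f_{21},
\]
with each summand a product of two explicit ``quadratic'' theta-type series (pentagonal numbers, and $3$-core values $n(3n-2)$ from Lemma~\ref{corelemma}). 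Your route would eventually land in the same place, but the paper's factorization makes the two terms visible immediately.

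\textbf{The key step.} The paper then avoids Hecke theory and CM entirely. For each of the two summands it simply completes the square modulo $p^2$: the reachable residues $a$ satisfy $a+5\cdot 24^{-1}\equiv 3b^2+\tfrac{9}{2}c^2$ (first term) or $a+5\cdot 24^{-1}\equiv \tfrac{63}{2}b^2+3c^2$ (second term). If $a\equiv kp-5\cdot 24^{-1}\pmod{p^2}$ with $1\le k<p$, then in the first case $(-6/p)=1$ is forced, and in the second $(-42/p)=1$ is forced. Requiring both to fail means $(-6/p)=-1$ and $(7/p)=1$, and the Chinese Remainder Theorem gives exactly the twelve classes modulo $168$. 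This is completely elementary --- no modular form machinery, no eigenvalues, just quadratic residues.

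\textbf{Where your version is fragile.} You frame Step~3 as ``$E$ is a Hecke eigenform modulo $2$ with eigenvalue $\lambda_p$,'' but as the paper's formula shows, $E$ is a sum of two inequivalent theta-type products, and there is no single $\lambda_p$. You do hedge on this in Step~4, but the hedge is the whole proof: one must treat the two terms separately and impose \emph{two} Legendre conditions simultaneously, which is exactly what produces the modulus $168=24\cdot 7$ rather than $24$. Your multiplicativity/recursion argument $a(p^{j+1})\equiv \lambda_p a(p^j)-p^{w-1}a(p^{j-1})$ would not apply as stated to a non-eigenform sum. The paper's completing-the-square argument sidesteps this entirely and is both shorter and more robust.
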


As an example, $$b_{21}(4\cdot19^2n + 76k+300) \equiv 0,$$ for all $k=1,2, \dots,18$.
\begin{theorem}\label{21in41} If $p \equiv 13, 17, 19,$ or $23 \pmod{24}$ is prime, then $$b_{21}(4(p^2n+kp-11 \cdot 24^{-1})+1) \equiv 0 $$ for all $1 \leq k < p$, where $24^{-1}$ is taken modulo $p^2$.
\end{theorem}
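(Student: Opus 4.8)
The plan is to pin down $\sum_{n\ge 0}b_{21}(4n+1)q^{n}$ modulo $2$ as a theta series attached to the imaginary quadratic field $\Q(\sqrt{-6})$, and then to extract the congruences from the splitting of primes in that field.

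First I would reduce $B_{21}=f_{21}/f_1$ modulo $2$ to a workable shape, using the elementary congruences $f_a^2\equiv f_{2a}$ and $1/f_1\equiv f_1^{2^k-1}/f_{2^k}$ together with the pentagonal number theorem ($f_a\equiv\sum_{m\in\Z}q^{am(3m-1)/2}$) and Jacobi's identity ($f_1^3\equiv\sum_{m\ge0}q^{m(m+1)/2}$), so that $B_{21}$ becomes, modulo $2$, a product of unary theta series divided by a series supported on a single residue class modulo $4$. Isolating the terms $q^{n}$ with $n\equiv1\pmod 4$ and rescaling $q^4\mapsto q$ should then yield an identity $\sum_{n\ge0}b_{21}(4n+1)q^{n}\equiv q^{e}\,\Theta(q)\pmod2$, where $\Theta$ is the theta series of a positive definite binary quadratic form of discriminant $-24$ — one of $x^2+6y^2$ or $2x^2+3y^2$, or their sum — with the variables restricted to fixed residue classes inherited from the pentagonal and triangular expansions. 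Such a dissection identity is obtained in practice either from a chain of Ramanujan theta-function identities (Jacobi triple and quintuple products, the functions $\varphi$, $\psi$, and the general $f(a,b)$) or from known $q$-dissections of $1/f_1$, and is verified either directly as a $q$-series identity or, after clearing denominators, by a Sturm bound in the relevant space of modular forms modulo $2$. Granting this, the lacunarity modulo $2$ of $b_{21}(4n+1)$ asserted above is immediate, since $\Theta$ is supported on those $N$ for which $24N+11$ is a norm from $\Q(\sqrt{-6})$ — a set of density $0$, indeed of size $O(X/\sqrt{\log X})$ up to $X$ by Landau, consistent with the numerics.

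For the congruences, record the support condition as: the coefficient of $q^{N}$ in $\Theta$ is odd only if $24N+11$ is represented by $x^2+6y^2$ or $2x^2+3y^2$ (subject to the inherited congruence constraints on the variables). If $p\nmid 6$ satisfies $(-6/p)=-1$ and $p\,\|\,M$, then neither $x^2+6y^2=M$ nor $2x^2+3y^2=M$ is solvable, since reducing modulo $p$ forces $p\mid x$ and $p\mid y$, hence $p^2\mid M$; the congruence constraints, being modulo integers prime to $p$, do not interfere. The primes with $(-6/p)=-1$ are precisely those with $p\equiv 13,17,19,23\pmod{24}$, because the quadratic character of conductor $24$ cutting out $\Q(\sqrt{-6})$ has kernel $\{1,5,7,11\}$. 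Finally, $\alpha\equiv-11\cdot 24^{-1}\pmod{p^2}$ is calibrated so that along $N=p^2n+kp+\alpha$ one has $24N+11\equiv 24kp\pmod{p^2}$, whence $v_p(24N+11)=1$ for every $1\le k<p$ (using $p>3$); such $N$ lie outside the support of $\Theta$ modulo $2$, giving $b_{21}(4(p^2n+kp-11\cdot 24^{-1})+1)\equiv0$.

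The main obstacle is the first step: producing and rigorously proving the dissection identity $\sum_{n\ge0}b_{21}(4n+1)q^n\equiv q^{e}\Theta(q)\pmod2$. Because $1/f_1$ is an infinite product, one must arrange that after the $4$-dissection only a theta series's worth of terms survives modulo $2$, and pinning down the exact form — including the residue constraints on the variables, which are what fix the constant $11$ — requires care; everything downstream of it is routine. I would expect the companion statement for $b_{21}(4n)$, Theorem~\ref{21in40}, to follow by the same method with $\Q(\sqrt{-6})$ replaced by $\Q(\sqrt{-42})$, consistent with its governing modulus $168$ and the constant $-5\cdot 24^{-1}$ appearing there.
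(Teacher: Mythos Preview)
Your proposal is correct and follows essentially the same route as the paper: the key identity is $\sum_{n\ge0}b_{21}(4n+1)\,q^n\equiv (f_3^3/f_1)\cdot f_3\pmod 2$, obtained simply by writing $f_{21}/f_1=(f_3/f_1)(f_{21}/f_3)$ and applying the known $2$-dissections of $f_3/f_1$ and $f_7/f_1$ (the latter magnified by $3$), after which completing the square on $n(3n-2)+\tfrac{3}{2}m(3m-1)$ gives $24a+11\equiv 36(2b^2+3c^2)$, i.e.\ exactly the discriminant $-24$ form you anticipated. The dissection step you flag as the main obstacle is therefore more elementary than your sketch suggests---no Sturm bound or Ramanujan theta-function manipulations are needed---and your closing remark that $b_{21}(4n)$ should involve $\Q(\sqrt{-42})$ is on target: the paper's two-term formula for that progression produces both a $-6$ and a $-42$ residue condition, whose intersection gives the list of primes modulo $168$.
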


As an example, $$b_{21}(4 \cdot 13^2 n + 52k + 309) \equiv 0,$$ for all $k=1,2, \dots,12$.\\
\\
\noindent \emph{Case $m = 25$:}

We have results within each residue class modulo 4 for $B_{25}$: 
\begin{theorem}\label{b25thm} The sequences $b_{25}(4n)$, $b_{25}(4n+1)$, and $b_{25}(8n+3)$ are not identically zero but lacunary modulo 2, and the following congruences hold within arithmetic progressions modulo 4: \begin{enumerate}
\item If $p \equiv 31$ or $39 \pmod{40}$ is prime, then $$b_{25}(4(p^2n + p k - 4^{-1})) \equiv 0 $$ for all $1 \leq k < p$, where $4^{-1}$ is taken modulo $p^2$.  
\item If $p \equiv 31$ or $39 \pmod{40}$ is prime, then $$b_{25}(4(p^2n+pk - 2^{-1}) + 1) \equiv 0 $$ for all $1 \leq k < p$, where $2^{-1}$ is taken modulo $p^2$.
\item $$b_{25}(40n+34) \equiv 0 .$$
\item $$\sum_{n=0}^\infty b_{25}(8n+3) \equiv f_1^{12} + q^{12} f_{25}^{12}.$$
Therefore, $b_{25}(8n+3)$ is even as long as $n$ is not 4 times a triangular number or 12 more than 100 times a triangular number. This further implies evenness for infinitely many arithmetic progressions, the simplest of which is $$b_{25}(16n+11) \equiv 0 .$$
\end{enumerate}
\end{theorem}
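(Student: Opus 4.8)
The plan is, for each of the three series $\sum_n b_{25}(4n)q^n$, $\sum_n b_{25}(4n+1)q^n$ and $\sum_n b_{25}(8n+3)q^n$, to reduce it modulo $2$ to an explicit finite sum of eta-products, to recognize each summand as a theta-type series supported on the values of a single quadratic polynomial, and then to extract the lacunarity claims, the explicit description of the odd values in (4), and the congruences in (1)--(3) from the arithmetic of those polynomials --- using the modular-forms-modulo-$2$ machinery of Section~\ref{back}, in particular Sturm's bound, to verify the underlying identities.

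\emph{Reduction.} Starting from $B_{25}=f_{25}/f_1$, one rewrites $B_{25}$ modulo $2$ using the congruences $f_d^{2^k}\equiv f_{2^k d}$ together with the classical $2$-dissections of $1/f_1$ recorded in Section~\ref{back}, and then dissects according to the residue of the exponent modulo $4$ (and, for the third series, modulo $8$). After multiplying through by a harmless factor $f_1^{2^k}\equiv f_{2^k}$ to clear denominators, each of the three resulting series is, up to a power of $q$, a holomorphic modular form modulo $2$ on some $\Gamma_0(N)$ with $N\mid 2^a\cdot 25$. For the third series one then reads off from the numerical data the candidate identity $\sum_n b_{25}(8n+3)q^n\equiv f_1^{12}+q^{12}f_{25}^{12}$; after clearing denominators and matching normalizations, both sides are holomorphic modular forms modulo $2$ of a common weight and level, so Sturm's bound reduces the proof of (4) to checking finitely many coefficients.

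\emph{Lacunarity, non-vanishing, explicit supports.} Since $f_d^3\equiv\sum_{m\ge 0}q^{d\,m(m+1)/2}$ (Jacobi) and $f_d\equiv\sum_{m\in\Z}q^{d\,m(3m-1)/2}$ (Euler), every summand that occurs in the reductions above has the form $q^c f_d^3$ or $q^c f_d$ modulo $2$, hence is supported on the values of a single quadratic polynomial; a finite union of such value sets has natural density $0$, which gives lacunarity for all three series. For $b_{25}(8n+3)$, part (4) pins the support down: $f_1^{12}\equiv f_4^3\equiv\sum_m q^{4T_m}$ and $q^{12}f_{25}^{12}\equiv q^{12}f_{100}^3\equiv\sum_m q^{12+100T_m}$, where $T_m=m(m+1)/2$, so $b_{25}(8n+3)$ is odd exactly when $n=4T_m$ or $n=12+100T_m$; both of these sets consist of even integers, so $b_{25}(8(2n+1)+3)=b_{25}(16n+11)\equiv 0$ for all $n$, and examining $4T_m$ and $12+100T_m$ modulo higher powers of $2$ yields the further even progressions. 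None of the three series is identically zero, since the coefficients $b_{25}(0)=1$, $b_{25}(1)=1$, $b_{25}(3)=3$ are all odd.

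\emph{The $p^2$-congruences.} Granting the reductions, (1) says precisely that $4(p^2 n+pk-4^{-1})$ lies in the support of no summand of $\sum_n b_{25}(4n)q^n$ for $1\le k<p$, and (2) is the analogous statement for $\sum_n b_{25}(4n+1)q^n$; then (3) follows the same way once the residue class $34$ modulo $40$ has been isolated (equivalently, from a self-similarity at the prime $5\mid 25$ in the spirit of Theorems~\ref{b19thm} and \ref{threesim}). For a summand $q^c f_d^3$, completing the square turns the equation $c+d\,m(m+1)/2=p^2 n+pk-4^{-1}$ into a congruence $x^2\equiv D\pmod p$ for an integer $D$ depending only on $(c,d)$; if $D$ is a quadratic non-residue modulo $p$ this is unsolvable, so once $D$ is a non-residue for every summand the target value lies in no support and the coefficient vanishes modulo $2$. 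One then checks that the conjunction of these residue conditions, over the finitely many summands, is exactly $p\equiv 31,39\pmod{40}$ in (1) and (2). The main obstacle is the reduction step: producing eta-product representatives of the dissected series that are simultaneously holomorphic modular forms modulo $2$ of small level \emph{and} split into a short sum of theta pieces whose discriminants $D$ force, via quadratic reciprocity, exactly the stated congruence classes of $p$; with those representatives in hand, the rest is bookkeeping with shifts and a Sturm-bound computation.
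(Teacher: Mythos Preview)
Your outline captures the right overall strategy---dissect $B_{25}$ modulo $2$ into explicit eta-type pieces, read off lacunarity and supports, and analyze residues modulo $p^2$---but the technical core has a real gap: you assert that each summand in the dissections of $\sum b_{25}(4n)q^n$ and $\sum b_{25}(4n+1)q^n$ is a \emph{single} theta series of the form $q^c f_d$ or $q^c f_d^3$, supported on one quadratic polynomial. In fact the dissection (which comes not from a ``classical $2$-dissection of $1/f_1$'' but from the identity $f_1 f_5 \equiv f_1^6 + q f_5^6$ applied to $B_{25}=(f_{25}/f_5)(f_5/f_1)$) yields \emph{products} of two such series: for instance $\sum b_{25}(4n)q^n \equiv f_1 f_5 + q^2 f_5^6 f_{25}/f_1 + q^4 f_1^2 f_{25}^4$, and after invoking the $5$-core expansion (Lemma~\ref{corelemma}) one has ten terms, each a product of two quadratic theta series. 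Lacunarity of such a product is not the trivial ``finite union of quadratic value sets has density $0$'' argument you give; it needs Landau's lemma (Lemma~\ref{laclemma}).

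The same error undermines your $p^2$ argument. Your single-variable reduction $x^2 \equiv D \pmod{p}$ does not match the actual situation: for a term like $f_1 f_5$, completing the square gives $a + 4^{-1} \equiv \tfrac{3}{2}x^2 + \tfrac{15}{2}y^2 \pmod{p^2}$, and the mechanism is that $a \equiv -4^{-1} \pmod{p}$ forces either $x \equiv y \equiv 0 \pmod{p}$ (whence $a \equiv -4^{-1} \pmod{p^2}$, i.e.\ $k=0$) or else $(x/y)^2 \equiv -5 \pmod{p}$. One must then check that \emph{all} the terms simultaneously require $-1$, $-2$, $-5$ to be residues, which pins down $p \equiv 31, 39 \pmod{40}$. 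Your one-variable version would instead rule out the entire residue class modulo $p$, including $k=0$, which is false. Finally, part (3) is not just ``the same way'': the paper extracts the $5$-residue class via Ramanujan's identity $\sum p(5n+4)q^n = 5 f_5^5/f_1^6$, obtains $\sum b_{25}(20n+14)q^n \equiv f_1^{18} + q^4 f_5^{24}/f_1^6$ after cancellation modulo $2$, and observes that only even powers survive. Your proposal for part (4) via Sturm would work, though the paper handles it elementarily by expanding $f_1 f_5$ and its magnification.
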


We also conjecture the following noncongruences, analogous to Ono's and Radu's theorems for the partition function (see \cite{Ono2,Radu}):
\begin{conjecture}\label{noncong} Let $m \in \{ 6, 10, 14, 15, 18, 20, 22, 26, 27, 28\}$. We have: 
\begin{enumerate}
\item For no integers $A>0$ and $ B \ge 0$, $b_m(An+B) \equiv 0 $ for all $n$.
\item The series $f_m / f_1$ has odd density $1/2$.
\end{enumerate}
\end{conjecture}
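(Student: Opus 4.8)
The conjecture splits into two claims of very different difficulty, and I will treat them separately. Part (1), the non-existence of an identically-even progression, is in principle within reach of the modular-forms-modulo-2 machinery that Ono \cite{Ono2} and Radu \cite{Radu} used to prove the analogous statement for $p(n)$. Part (2), however, is the analog for these eta-quotients of the Parkin--Shanks conjecture, and I expect it to be genuinely out of reach; the honest plan there is to extract the strongest partial information currently available and to isolate precisely what is missing.

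For part (1) the plan is to first realize $B_m \bmod 2$ as the reduction of a modular form. Using the elementary congruence $f_d^2 \equiv f_{2d} \pmod 2$, one has $1/f_1 \equiv f_1/f_2 \pmod 2$, and iterating gives $B_m = f_m/f_1 \equiv f_m\, f_1^{\,2^j-1}/f_{2^j} \pmod 2$ for every $j \ge 1$. For $j$ large this exhibits $B_m$ as (a fixed power of $q$ times) the mod-2 reduction of a holomorphic eta-quotient $g_m$; as with $1/f_1$ itself this is a form of half-integral weight $(2^j-1)/2$, handled exactly as in Radu's treatment of the partition function. I would verify via the Ligozat/Newman order conditions that $g_m$ is holomorphic at the cusps and record its level $N_m$ and multiplier. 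Since for each of our $m$ the hypothesis of Theorem \ref{cot} fails, this is consistent with $g_m$ being non-lacunary, and in particular nonzero modulo 2.

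With $g_m$ in hand, the non-existence of a progression $b_m(An+B)\equiv 0$ follows from the standard strategy: the relevant space of mod-2 forms of level $N_m A^2$ is finite-dimensional and the Hecke algebra acting on it is an Artinian local $\overline{\mathbb{F}}_2$-algebra. If $b_m(An+B)\equiv 0$ for all $n$, then applying the operators $U_p$ and $V_p$ for the primes $p\mid A$ produces an explicit mod-2 form supported away from a full residue class, and one derives a contradiction by checking that $g_m$ is not annihilated by the corresponding sieving operator. In practice I would follow Radu \cite{Radu}: reduce to finitely many sieved forms, compute their $q$-expansions up to the Sturm bound for level $N_m A^2$, and verify directly that each is nonzero modulo 2. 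The hard part here is the non-semisimplicity of the mod-2 Hecke algebra: when $g_m$ carries a nontrivial nilpotent part the clean eigenform dichotomy fails, and one must instead track the full filtration under the theta operator, which for the larger levels (for instance $m=26,28$) is a finite but delicate case analysis needed to rule out accidental vanishing.

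Part (2) is where I do not expect a proof. The upper bound $\delta_m \le 1/2$ would follow from part (i) of Conjecture \ref{mainconj}, itself open. For a lower bound the best available input is the Bella\"iche--Soundararajan-type estimate behind \cite{BGS}: once $g_m$ is placed in that framework it guarantees that $b_m(n)$ is odd for at least the order of $\sqrt{x}/\log\log x$ values of $n\le x$, which at least re-proves non-lacunarity but falls short of positive density by a factor of roughly $\sqrt{x}$. Beyond this, the only rigorous leverage is the collection of consistency relations among the $\delta_m$ coming from the self-similarity arguments of \cite{JKZ} and of the present paper, which constrain these densities relative to one another without pinning any of them down. Closing the gap to $\delta_m=1/2$ is exactly the central open problem of the area: no method presently yields positive odd density for a single eta-quotient failing the hypothesis of Theorem \ref{cot}, and a genuine proof would seem to demand a new equidistribution statement for the mod-2 Galois representations attached to the $g_m$, of a strength unavailable even for $p(n)$ itself. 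I would therefore state part (2) as a conjecture and present the \cite{BGS} bound and the self-similarity relations as the sharpest unconditional evidence.
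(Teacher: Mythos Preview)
The statement you are attempting to prove is labeled a \emph{conjecture} in the paper, and the paper offers no proof of it; indeed, in the Future Directions section the authors explicitly list ``Prove Conjecture \ref{noncong}'' as an open problem, remarking only that the first clause ``seems more accessible.'' So there is no paper proof to compare your proposal against, and your submission should be read as a research plan rather than a proof.

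On part (2) you correctly diagnose the situation: this is the Parkin--Shanks analog for these eta-quotients, and nothing in the current literature (including \cite{BGS}) gets anywhere near positive density. Stating it as a conjecture with the $\sqrt{x}/\log\log x$ lower bound as supporting evidence is exactly right.

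On part (1) your plan has a genuine gap. You propose to ``reduce to finitely many sieved forms, compute their $q$-expansions up to the Sturm bound for level $N_m A^2$, and verify directly that each is nonzero modulo 2.'' But the conjecture quantifies over \emph{all} $A>0$, so a Sturm-bound verification at level $N_m A^2$ handles only one modulus at a time; you cannot run infinitely many such checks. Radu's proof for $p(n)$ does treat all moduli simultaneously, but not by Sturm-bound computation: the core of \cite{Radu} is a structural argument about the action of Hecke and theta operators on the (finite-dimensional) space of mod-$2$ reductions, together with a delicate analysis that rules out vanishing on a full residue class uniformly. Your sketch gestures at the right ingredients (nilpotent Hecke action, theta filtration) but does not explain how they combine to eliminate the dependence on $A$, and that is precisely the nontrivial content of Radu's argument. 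Adapting it to $f_m/f_1$ for the listed $m$ is plausible but would require re-running his structural analysis in each case, not a finite computation; this is presumably why the paper's authors left it open.
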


Note that if the main Conjecture \ref{mainconj} holds, then either clause of Conjecture \ref{noncong} implies the other (and much more).

Finally, in the last section of our paper, we also demonstrate that an odd density may be constant over suitable infinite families of eta-quotients. As an example, we show:
\begin{theorem}\label{b6thm} Assume the odd density $\delta^{(k)}$ of $$\frac{{f_3}^{9k+2}}{{f_1}^{3k+1}}$$ exists for \emph{any} $k \geq 0$. Then $\delta^{(k)}$ exists for \emph{all} $k \geq 0$, and its value is independent of $k$.  In particular, since $\delta^{(0)} = \delta_6$ (the odd density of the 6-regular partitions), if $\delta_6$ exists then all of the $\delta^{(k)}$ exist and are equal to $\delta_6$. 
\end{theorem}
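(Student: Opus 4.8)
The plan is to work modulo $2$, to observe that consecutive members of the family differ by multiplication by a fixed lacunary eta-quotient, and then to prove a transfer principle: such a multiplication neither affects the existence of an odd density nor changes its value.

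\emph{Step 1 (reduction mod $2$).} Put $F_k:=f_3^{9k+2}/f_1^{3k+1}$, so that $F_{k+1}=F_k\cdot L$ with $L:=f_3^9/f_1^3$ independent of $k$. From $f_j^{2^a}\equiv f_{2^aj}$ and $f_3^2\equiv f_6$ we get $F_0\equiv f_6/f_1\pmod 2$, the $6$-regular partition series; hence $\delta^{(0)}=\delta_6$. By Theorem~\ref{cot} every power $L^k=f_3^{9k}/f_1^{3k}$ is lacunary modulo $2$, since $\sum r_i/\alpha_i=9k/3=3k$ equals $\sum s_i\gamma_i=3k$ (the boundary case of the hypothesis); equivalently, $F_k=\Lambda_k/f_3$ with $\Lambda_k:=f_3^{9k+3}/f_1^{3k+1}$ itself lacunary mod $2$ and $\Lambda_{k+1}=\Lambda_k L$. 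In either formulation it suffices to prove the \emph{transfer principle}: if $\delta(F_k)$ exists for one value of $k\ge 0$, then $\delta(F_{k'})$ exists and equals it for every $k'\ge 0$ --- equivalently, multiplication by $L^{\pm 1}$ respects existence and value of the odd density on this family. Induction then finishes the proof, the case $k=0$ giving the final assertion.

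\emph{Step 2 (modular framework).} After the standard $\eta$-power shift, $F_0$ and $L$ become weakly holomorphic modular forms of (half-)integral weight on a fixed $\Gamma_0(N)$ with $6\mid N$; multiplying by a large enough power of $\Delta=\eta^{24}$ --- which only shifts the series by a power of $q$, hence does not affect densities --- moves them into the ring of honest mod-$2$ modular forms of level $N$. Because $L$ is lacunary modulo $2$, the theorems of Serre, Gordon--Ono, and Cotron \emph{et al.} identify $L\bmod 2$ with a finite $\mathbb{F}_2$-linear combination of CM (theta) series, so its odd support $E=\{e:[q^e]L\text{ odd}\}$ is a finite union of value-sets of binary quadratic forms with $\#(E\cap[0,x])=O(x/\sqrt{\log x})=o(x)$, and the same holds for each $L^k$; the opposite factor $L^{-1}=f_1^3/f_3^9$ is made explicit via the congruence $f_1^3\equiv f_3+q f_9^3\pmod 2$, which lets the reverse direction be analyzed by the same means.

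\emph{Step 3 (the transfer principle --- the crux).} For each $N$ one has $[q^N](F_kL)=\bigoplus_{e\in E,\,e\le N}[q^{N-e}]F_k$, so passing to $F_{k+1}=F_kL$ replaces the odd-support indicator of $F_k$ by its XOR-convolution with the thin set $E$, and the odd support of $F_{k+1}-F_k=F_k(L-1)$ is exactly the symmetric difference $S_{F_{k+1}}\triangle S_{F_k}$. The key --- and this is where essentially all the difficulty lies --- is to show that this symmetric difference is \emph{asymptotically balanced} between the coefficients that switch off and those that switch on, so that the two counting functions $\#\{n\le x:[q^n](\cdot)\text{ odd}\}$ agree up to $o(x)$; equivalently, multiplication by $L$ preserves the filtration step of the mod-$2$ Hecke module that governs the density. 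Soft lacunarity alone does not suffice, since for a general lacunary multiplier the density genuinely changes (e.g. $1/f_1\mapsto(1/f_1)\cdot f_1=1$ destroys it): one must use that $F_0=f_6/f_1$ is \emph{not} annihilated by $L$ --- indeed $F_0L\equiv f_3^{11}/f_4\pmod 2$ is again "generic", not lacunary --- and that, in the completed Hecke algebra of level $N$, $L\equiv 1+(\text{finite }\mathbb{F}_2\text{-combination of CM forms})$ is a unit whose nilpotent part lies in a filtration step high enough that multiplication by it cannot move a form across the step controlling its density (this is the structural input behind Theorem~\ref{cot}, in the spirit of Serre's lacunarity criterion and its refinements). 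Carrying out this comparison --- partitioning $[0,x]$ into dyadic windows and arithmetic progressions and estimating the cross-terms from the explicit quadratic description of $E$ (and of the support of $L^{-1}$) against the equidistribution of $F_k$'s parity inherited from modularity --- is the main obstacle. I do not expect a purely $q$-series argument via iterated $2$- and $3$-dissections to suffice: the presence of a $q^1$-term in $L$ means $F_{k+1}$ is never a dilate of $F_k$, so there is a genuine conspiracy --- non-lacunary symmetric difference, equal densities --- that must be explained structurally.

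\emph{Step 4 (conclusion).} Granting the transfer principle, induction in both directions from any index at which the density is known to exist shows that all $\delta^{(k)}$ exist and coincide; taking that index to be $0$ gives that if $\delta_6$ exists, then every $\delta^{(k)}$ exists and equals $\delta_6$.
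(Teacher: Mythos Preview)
Your proof has a genuine gap: the ``transfer principle'' in Step~3 is never actually established. You correctly observe that lacunarity of $L$ alone cannot force multiplication by $L$ to preserve odd density (your own example $1/f_1\mapsto 1$ shows this), and the structural sketch you offer in its place --- units in a completed Hecke algebra, dyadic windows, equidistribution --- is not a proof but a list of ingredients, with the hard estimate left undone. You even flag this yourself (``the main obstacle'', ``a genuine conspiracy\dots that must be explained structurally''), and Step~4 begins ``Granting the transfer principle''. As written, the argument is conditional on a claim at least as hard as the theorem.

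The paper avoids this entirely by working \emph{additively} rather than multiplicatively. Dividing the identity $(f_1f_3)^3\equiv f_1^{12}+qf_3^{12}$ by $f_1^3f_3^3$ gives $1\equiv f_1^9/f_3^3+q\,f_3^9/f_1^3$, and multiplying through by $F_k$ yields
\[
F_k \;\equiv\; \frac{f_3^{\,9k-1}}{f_1^{\,3k-8}} \;+\; q\,F_{k+1}\pmod 2.
\]
The first summand on the right satisfies the hypothesis of Theorem~\ref{cot} (for small $k$ it is even a genuine product $f_1^af_3^b$), hence has density~$0$. Since adding a density-$0$ series to a power series changes the odd-support counting function by $o(x)$, the density of $F_k$ exists if and only if that of $qF_{k+1}$ does, and the two agree; the shift by $q$ is harmless. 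Induction in both directions finishes the proof in a few lines. Ironically, this is exactly the kind of elementary $q$-series dissection you dismissed at the end of Step~3.
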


\section{Background}\label{back}

Given two power series $f(q) = \sum_{n=0}^\infty a(n) q^n$ and $h(q) = \sum_{n=0}^\infty b(n) q^n$, if we say $f(q) \equiv h(q)$, in this paper we always mean that $a(n) \equiv b(n) \pmod{2}$ for all $n$.

Two common series expansions we extensively need (see for instance \cite{Andr}) are the modulo 2 reduction of Euler's Pentagonal Number Theorem, $$f_1 \equiv \sum_{n \in \mathbb{Z}} q^{\frac{n}{2}(3n-1)},$$ \noindent so named since the exponents are the eponymous (generalized) pentagonal numbers, and the classical fact that the cube of $f_1$ has odd coefficients precisely at the triangular numbers, $${f_1}^3 \equiv \sum_{n=0}^\infty q^{\binom{n+1}{2}}.$$

A fact we also use frequently is:
\begin{lemma}\label{222} For any power series $f(q) = \sum_{n=0}^\infty a(n) q^n$, it holds that $$(f(q))^2 \equiv \sum_{n=0}^\infty a(n) q^{2n}.$$
In particular, ${f_j}^2 \equiv f_{2j}$.
\end{lemma}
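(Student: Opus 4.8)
The plan is to reduce everything to the ``freshman's dream'' identity over $\mathbb{F}_2$. First I would expand the square as a Cauchy product and separate the diagonal from the off-diagonal terms:
\[
(f(q))^2 = \sum_{n=0}^\infty \Bigl( \sum_{i+j=n} a(i)a(j) \Bigr) q^n = \sum_{n=0}^\infty a(n)^2\, q^{2n} + 2 \sum_{0 \le i < j} a(i)a(j)\, q^{i+j},
\]
all of this taking place in the ring of formal power series, so there is nothing to check about convergence.

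Next I would reduce modulo $2$. Each off-diagonal term carries an explicit factor $2$ and hence vanishes; and for each integer $a(n)$ one has $a(n)^2 \equiv a(n) \pmod 2$, because $a(n)^2 - a(n) = a(n)\,(a(n)-1)$ is a product of two consecutive integers. This yields at once
\[
(f(q))^2 \equiv \sum_{n=0}^\infty a(n)\, q^{2n} \pmod 2,
\]
which is the claimed identity.

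For the displayed special case, I would simply note that $f_{2j}(q) = \prod_{i \ge 1}(1 - q^{2ji})$ is the series $f_j(q) = \prod_{i \ge 1}(1 - q^{ji}) = \sum_{n \ge 0} a(n) q^n$ with $q$ replaced by $q^2$, so that $f_{2j}(q) = \sum_{n \ge 0} a(n) q^{2n}$; comparing with the identity just proved (applied to $f = f_j$) gives ${f_j}^2 \equiv f_{2j}$. As for obstacles: there really are none here — the statement is elementary and classical. The only point meriting a word of caution is the step $a(n)^2 \equiv a(n)$, which I would want to state for an arbitrary integer coefficient rather than assume the coefficients have already been normalized to lie in $\{0,1\}$, but even that is immediate.
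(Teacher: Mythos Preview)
Your proof is correct and is the standard elementary argument; the paper itself states this lemma as a well-known fact without proof, so there is nothing to compare against. Your handling of both the general Cauchy-product computation and the special case ${f_j}^2 \equiv f_{2j}$ via the substitution $q \mapsto q^2$ is exactly what one would expect.
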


The eta-quotients ${f_t}^t / f_1$ (which are the generating functions for the so-called $t$-core partitions) have the following modulo 2 expansions for $t=3$ and $t=5$ (see \cite{Robbins}, Theorem 7, and \cite{Hirsch}, equation (10), respectively):
\begin{lemma}\label{corelemma} We have:
\begin{equation}\label{threecore} \frac{{f_3}^3}{f_1} \equiv \sum_{n \in \mathbb{Z}} q^{n(3n-2)};
\end{equation}

\begin{equation}\label{fivecore}
\frac{{f_5}^5}{f_1} \equiv \sum_{n=1}^\infty q^{n^2-1} + \sum_{n=1}^\infty q^{2n^2-1} + \sum_{n=1}^\infty q^{5n^2-1} + \sum_{n=1}^\infty q^{10n^2-1} .
\end{equation}
\end{lemma}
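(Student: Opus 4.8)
The plan is to treat the two identities in parallel: each asserts that the mod-2 reduction of a $t$-core generating function $f_t^t/f_1$ is an explicit theta series, and in each case I would first rewrite the stated right-hand side in a more structural form and then match it against $f_t^t/f_1$ using Lemma \ref{222}.

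For \eqref{threecore} I would argue entirely elementarily. The right-hand side $\sum_{n\in\Z}q^{n(3n-2)}$ is a bilateral theta series: writing $3n^2-2n = \tfrac{n(n+1)}{2}+5\cdot\tfrac{n(n-1)}{2}$ identifies it as the Ramanujan theta $f(q,q^5)$, so the Jacobi triple product gives
\[
\sum_{n\in\Z}q^{n(3n-2)} = (-q;q^6)_\infty\,(-q^5;q^6)_\infty\,(q^6;q^6)_\infty.
\]
Since $1+x\equiv 1-x$, reducing mod 2 replaces the two factors $(-q;q^6)_\infty(-q^5;q^6)_\infty$ by $(q;q^6)_\infty(q^5;q^6)_\infty=\prod_{\gcd(m,6)=1}(1-q^m)$, and a one-line inclusion-exclusion on residues mod $6$ rewrites this product as $f_1 f_6/(f_2 f_3)$. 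Hence the right-hand side is congruent to $f_1 f_6^2/(f_2 f_3)$, and \eqref{threecore} becomes the claim $f_3^3/f_1\equiv f_1 f_6^2/(f_2 f_3)$, i.e.\ (clearing the invertible denominators) $f_2 f_3^4\equiv f_1^2 f_6^2$. This last congruence is immediate from Lemma \ref{222}: $f_1^2\equiv f_2$ and $f_3^4\equiv f_6^2\equiv f_{12}$, so both sides reduce to $f_2 f_{12}$.

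For \eqref{fivecore} the analogous maneuver does not close up, because the right-hand side is a sum of four one-sided theta series rather than a single Jacobi product, and this is where the real work lies. Here I would instead invoke the classical closed form for the $5$-core counting function coming from the fact that $q\,f_5^5/f_1=\eta(5\tau)^5/\eta(\tau)$ is a weight-$2$ Eisenstein series on $\Gamma_0(5)$ (Garvan--Kim--Stanton; Hirschhorn), namely $c_5(n)=\sum_{d\mid n+1}\left(\tfrac{5}{d}\right)\tfrac{n+1}{d}$ with $\left(\tfrac{5}{\cdot}\right)$ the Kronecker symbol, where $f_5^5/f_1=\sum c_5(n)q^n$. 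Reducing mod 2, the Kronecker symbol satisfies $\left(\tfrac{5}{d}\right)\equiv[\,5\nmid d\,]$, so with $M=n+1$ one gets $c_5(n)\equiv\sum_{d\mid M,\,5\nmid d} M/d \equiv \sigma(M')\pmod 2$, where $M'$ is the prime-to-$5$ part of $M$. A standard parity computation for $\sigma$ then shows $\sigma(M')$ is odd exactly when the odd part of $M'$ is a perfect square, i.e.\ exactly when every prime other than $2$ and $5$ divides $M$ to an even power --- equivalently, when the squarefree part of $M=n+1$ lies in $\{1,2,5,10\}$, so that $n+1\in\{m^2,2m^2,5m^2,10m^2\}$. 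This is precisely the set of exponents (shifted by $1$) appearing on the right-hand side of \eqref{fivecore}, completing the identification.

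The main obstacle is the input to the second case: the Eisenstein-series evaluation of $f_5^5/f_1$ (equivalently, Hirschhorn's underlying theta identity) is not elementary in the way the Jacobi-triple-product argument for \eqref{threecore} is, and it is what forces the four-term shape of the right-hand side. I would remark that the two cases are in fact uniform at the level of parity: \eqref{threecore} may likewise be deduced from the weight-$1$ formula $a_3(n)=\sum_{d\mid 3n+1}\left(\tfrac{d}{3}\right)$, which mod 2 is $\tau(3n+1)$ and hence odd exactly when $3n+1$ is a square --- matching $\sum_{n\in\Z}q^{n(3n-2)}$ --- so both identities ultimately rest on the elementary fact that $\tau$ (resp.\ $\sigma$) is odd precisely on squares (resp.\ on $2$-power multiples of odd squares).
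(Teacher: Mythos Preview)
Your proof is correct. Note that the paper does not actually prove this lemma: it simply cites \cite{Robbins}, Theorem~7, for \eqref{threecore} and \cite{Hirsch}, equation~(10), for \eqref{fivecore}. So there is no ``paper's own proof'' to compare against, and what you have written supplies what the paper outsources.

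For \eqref{threecore}, your Jacobi-triple-product argument is clean and fully elementary; nothing further is needed. For \eqref{fivecore}, you correctly identify that the real content is the weight-$2$ Eisenstein evaluation $c_5(n)=\sum_{d\mid n+1}\bigl(\tfrac{5}{d}\bigr)\tfrac{n+1}{d}$, and your mod-$2$ reduction of that formula to the condition that the squarefree part of $n+1$ lie in $\{1,2,5,10\}$ is accurate (the sum over $d\mid M$ with $5\nmid d$ is $5^{v_5(M)}\sigma(M')$, odd precisely when the odd part of $M'$ is a square). This is essentially how Hirschhorn arrives at his equation~(10) as well, so your route and the cited reference converge. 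Your closing remark that the $3$-core case admits the parallel treatment via $a_3(n)=\sum_{d\mid 3n+1}\bigl(\tfrac{d}{3}\bigr)\equiv\tau(3n+1)$ is a nice unification, though the direct triple-product argument you give first is more self-contained.
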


The following classical result can be attributed to Landau \cite{Landau}:
\begin{lemma}\label{laclemma} Let $r(n)$ and $s(n)$ be quadratic polynomials.  Then $$\left( \sum_{n \in \mathbb{Z}} q^{r(n)} \right) \left( \sum_{n \in \mathbb{Z}} q^{s(n)} \right)$$ \noindent is lacunary modulo 2.
\end{lemma}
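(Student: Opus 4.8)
The plan is to read off the coefficients of the product combinatorially and then reduce the entire question to the classical density-zero theorem for the values of a positive-definite binary quadratic form, which is exactly the result being attributed to Landau. Writing $r(n) = an^2 + bn + c$ and $s(m) = dm^2 + em + f$, I expand
$$\left(\sum_{n\in\Z} q^{r(n)}\right)\left(\sum_{m\in\Z} q^{s(m)}\right) = \sum_{n,m\in\Z} q^{r(n)+s(m)} = \sum_{N} R(N)\, q^N,$$
where $R(N) = \#\{(n,m)\in\Z^2 : r(n)+s(m) = N\}$ is the coefficient of $q^N$. For these to be genuine power series (finitely many lattice points below each degree) the leading coefficients $a$ and $d$ must be positive; I will assume, as holds in every application in this paper, that $r$ and $s$ are integer-valued, so the $N$ range over integers, a shift by a power of $q$ (if the quadratics dip below $0$) being irrelevant to density.

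The key algebraic step is to recognize the exponent set as an affine image of the value set of a positive-definite binary quadratic form. Completing the square gives $4a\,r(n) = (2an+b)^2 + (4ac - b^2)$ and likewise for $s$, so that, with $X = 2an+b$ and $Y = 2dm+e$,
$$4ad\bigl(r(n)+s(m)\bigr) = d\,X^2 + a\,Y^2 + \bigl(4ad(c+f) - d b^2 - a e^2\bigr).$$
Hence $N$ lies in the support of the product only if $4adN - \kappa$ is represented by the positive-definite form $Q(X,Y) = dX^2 + aY^2$, where $\kappa = 4ad(c+f) - db^2 - ae^2 \in \Z$, and even then only with $X,Y$ in fixed residue classes modulo $2a$ and $2d$ (constraints that can only shrink the set). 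Thus the support of the product is contained in the affine preimage $\tfrac{1}{4ad}\bigl(\{Q\text{-values}\} + \kappa\bigr)$.

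Now I invoke Landau's theorem: the number of integers up to $x$ represented by a fixed positive-definite binary quadratic form is $O\!\left(x/\sqrt{\log x}\right)$, so such integers have natural density $0$. (Concretely, a value of $Q$ can contain a prime $p$ with $\left(\tfrac{-ad}{p}\right) = -1$ only to an even power, and the integers with that property already form a density-zero set.) Since the invertible affine map $N \mapsto 4adN - \kappa$ carries the support injectively into $\{Q\text{-values}\}$, and since $|\{Q\text{-values}\}\cap[0,4adx]| = o(x)$, the support of the product is itself contained in a set of density $0$. In particular, the set of $N$ with $R(N)$ odd lies in a density-zero set, which is precisely the assertion that the product is lacunary modulo $2$; indeed this yields the stronger statement that all but a density-zero set of the coefficients vanish outright over $\Z$.

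The only genuine obstacle is the external analytic input — Landau's density-zero estimate for the value set of a positive-definite binary quadratic form — which is exactly the classical result cited; everything else is the elementary reduction above. The minor bookkeeping to watch is (i) confirming positivity of the leading coefficients so that the generating functions are well defined, and (ii) checking that the congruence constraints on $X,Y$ together with the scaling and translation by $4ad$ and $\kappa$ only restrict, and never inflate, the support — both immediate, since a subset of $\Z$ whose image under an affine map of nonzero integer slope falls inside a density-zero set is itself of density zero.
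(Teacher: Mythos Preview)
The paper does not actually prove this lemma: it simply records it as a classical fact and attributes it to Landau \cite{Landau}, with no argument supplied. Your proposal therefore goes beyond what the paper does, by spelling out the standard reduction --- complete the square in each quadratic, recognize the exponent set as an affine image of a subset of the values of the positive-definite binary form $Q(X,Y)=dX^2+aY^2$, and then invoke Landau's density-zero estimate for integers represented by such a form. That reduction is correct and is exactly the intended content behind the citation.

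One small point of care, which does not affect correctness: for integer-valued quadratics the coefficients $a,b,c$ need only satisfy $2a,2b,c\in\Z$ (e.g.\ the pentagonal case $r(n)=\tfrac{3}{2}n^2-\tfrac{1}{2}n$), so your substitutions $X=2an+b$, $Y=2dm+e$ may land in $\tfrac{1}{2}\Z$ rather than $\Z$. This is harmless --- a further scaling by $4$ (working with $16ad\,N$ and the form $4dX'^2+4aY'^2$ in the integers $X'=4an+2b$, $Y'=4dm+2e$) restores integrality, and your density argument goes through unchanged. With that adjustment the proof is complete.
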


\subsection{Modular forms background}

We include here sufficient background from the theory of modular forms in order to make our proofs relatively self-contained.  For further details, see for instance the standard text \cite{Koblitz} or the monograph \cite{Ono}.

With $q = e^{2 \pi i z}$, define the \emph{$\eta$-function} as $$\eta(z) :=  q^{1/24} f_1.$$  An eta-quotient is a {modular form} for $\Gamma_0(N)$ of weight $k$ and character $\chi$ under the following conditions on the exponents and arguments appearing, by a theorem of Gordon-Hughes \cite{GH} and Newman \cite{Newman}, which also specifies the necessary values of $N$, $k$, and $\chi$.
\begin{theorem} [\cite{GH,Newman}]\label{ghn} Let $f(z) = \prod_{\delta \vert N} \eta^{r_\delta} (\delta z)$, with $r_\delta \in \mathbb{Z}$.  If
$$\sum_{\delta \vert N} \delta r_\delta \equiv 0 \, \pmod{24} \quad \text{ and } \quad \sum_{\delta \vert N} \frac{N}{\delta} r_\delta \equiv 0 \, \pmod{24},$$
then $f(z)$ is a modular form for $\Gamma_0(N)$ of weight $k=\frac{1}{2} \sum r_\delta$ and character $\chi(d) = \left( \frac{(-1)^k s}{d} \right)$, where $s = \prod_{\delta \vert N} \delta^{r_\delta}$ and $\left(\frac{\cdot}{d}\right)$ denotes the Kronecker symbol.
\end{theorem}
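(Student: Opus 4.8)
The plan is to verify directly the defining transformation law $f(\gamma z) = \chi(d)\,(cz+d)^k f(z)$ for every $\gamma = \begin{pmatrix} a & b \\ c & d \end{pmatrix} \in \Gamma_0(N)$, from which the assertion follows once one records that $f$ is holomorphic and nowhere-vanishing on the upper half-plane $\mathbb{H}$, being a finite product of powers of the nowhere-vanishing holomorphic function $\eta$. The workhorse is the classical transformation formula for the Dedekind eta function: $\eta(z+1) = e^{\pi i/12}\eta(z)$ and, for $\gamma' = \begin{pmatrix} a' & b' \\ c' & d'\end{pmatrix} \in \SL_2(\Z)$ with $c' > 0$, $\eta(\gamma' z) = \varepsilon(\gamma')\,(c'z+d')^{1/2}\eta(z)$, where the multiplier $\varepsilon(\gamma') = \exp\!\left(\pi i\left(\frac{a'+d'}{12 c'} - s(d',c') - \tfrac14\right)\right)$ is an explicit root of unity built from the Dedekind sum $s(d',c')$. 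The case $c=0$ (powers of the translation) is immediate: each factor gives $\eta(\delta(z+1)) = e^{\pi i \delta/12}\eta(\delta z)$, so $f(z+1) = \exp\!\left(\frac{\pi i}{12}\sum_{\delta \mid N}\delta r_\delta\right)f(z)$, and the first hypothesis $\sum_{\delta \mid N}\delta r_\delta \equiv 0 \pmod{24}$ is exactly what makes $f$ invariant under $z \mapsto z+1$.

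First I would establish the elementary but crucial identity that, for $\delta \mid N$ and $\gamma \in \Gamma_0(N)$ (so $N \mid c$, hence $\delta \mid c$), one has $\delta\,\gamma z = \gamma_\delta\,(\delta z)$ with $\gamma_\delta = \begin{pmatrix} a & \delta b \\ c/\delta & d\end{pmatrix} \in \SL_2(\Z)$. Applying the eta transformation to each factor $\eta(\delta\,\gamma z) = \eta(\gamma_\delta(\delta z))$ produces the automorphy factor $\big((c/\delta)(\delta z) + d\big)^{1/2} = (cz+d)^{1/2}$, uniformly in $\delta$. Raising to the power $r_\delta$ and multiplying over $\delta \mid N$ then yields $f(\gamma z) = \Big(\prod_{\delta \mid N}\varepsilon(\gamma_\delta)^{r_\delta}\Big)(cz+d)^{(1/2)\sum_\delta r_\delta}\, f(z)$, which already exhibits the correct weight $k = \tfrac12\sum_{\delta \mid N} r_\delta$.

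The substance of the argument — and the step I expect to be the main obstacle — is to show that the accumulated root of unity $\prod_{\delta \mid N}\varepsilon(\gamma_\delta)^{r_\delta}$ equals the asserted character value $\chi(d) = \left(\frac{(-1)^k s}{d}\right)$, with $s = \prod_{\delta \mid N}\delta^{r_\delta}$. Here I would expand each $\varepsilon(\gamma_\delta)$, so that the total exponent collects the linear contribution $\frac{a+d}{12c}\sum_{\delta \mid N}\delta r_\delta$, the Dedekind-sum contribution $-\sum_{\delta \mid N} r_\delta\, s(d, c/\delta)$, and a constant. The plan is to apply the Dedekind reciprocity law to each $s(d, c/\delta)$; reciprocity introduces terms proportional to $\sum_{\delta}\delta r_\delta$ and to $\sum_{\delta} r_\delta/\delta = \frac{1}{N}\sum_{\delta}(N/\delta)r_\delta$, which is precisely why the two hypotheses are both needed: the first controls the terms weighted by $\delta$, the second those weighted by $N/\delta$, and together they force every transcendental exponential contribution to vanish modulo $2\pi i$. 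What survives is the quadratic-residue data packaged by the Kronecker symbol $\big(\frac{(-1)^k s}{d}\big)$. This is the delicate bookkeeping carried out by Newman and by Gordon--Hughes; I would follow their reduction rather than reprove reciprocity, taking care with the sign and quadratic-character factors distinguishing $d>0$ from $d<0$.

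Since the multiplier is multiplicative in $\gamma$, the law established on each $\eta$-factor assembles to the global identity $f(\gamma z) = \chi(d)(cz+d)^k f(z)$ on all of $\Gamma_0(N)$. I would close by noting that $f$ is automatically meromorphic at the cusps, its $q$-expansion at infinity being a Laurent series in $q$ after the normalizing power of $q$, so the transformation law exhibits $f$ as a (weakly holomorphic) modular form of the stated weight and character; the sharper statement that $f$ is holomorphic at every cusp, when required, follows from the separate Ligozat order-at-cusps computation and is not needed to obtain the transformation law itself.
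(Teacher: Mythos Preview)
The paper does not prove this theorem: it is quoted as a background result from the literature, with the citation \cite{GH,Newman} in the theorem header, and no proof is given or attempted. Your proposal is therefore not competing with any argument in the paper; it is a sketch of the classical proof that the cited sources themselves carry out.

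As a sketch it is sound. The reduction $\delta\,\gamma z = \gamma_\delta(\delta z)$ with $\gamma_\delta = \begin{pmatrix} a & \delta b \\ c/\delta & d \end{pmatrix}$ is the right starting point, and your identification of the weight is immediate from it. The honest admission that the evaluation of $\prod_\delta \varepsilon(\gamma_\delta)^{r_\delta}$ as $\left(\frac{(-1)^k s}{d}\right)$ is the real content, and that you would follow Newman and Gordon--Hughes through the Dedekind-sum bookkeeping rather than redo it, is appropriate: that computation is genuinely intricate (case splits on the parity of $c/\delta$ and $d$, the Petersson--Knopp form of the multiplier, and the quadratic reciprocity packaged into the Kronecker symbol), and a self-contained write-up would be several pages. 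Your closing remark that the transformation law gives only a weakly holomorphic form, with holomorphicity at the cusps handled separately by Ligozat's criterion, matches exactly how the paper itself deploys the two theorems in tandem.
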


\begin{remark} Note that the Kronecker symbol appearing in Theorem \ref{ghn} (as described in \cite{OnoKronecker}, Lemma 6 and \cite{XiongKronecker}, Proposition 2.1), multiplicatively extends the Jacobi symbol from odd values to all positive integers, by the assignment:
 \begin{equation}
    \left( \frac{a}{2} \right) =
    \begin{cases}
      0 & \text{if } \, a \text{ even}; \\
      1 & \text{if } \, a \equiv \pm 1\pmod{8}; \\
      -1 & \text{if } \, a \equiv \pm 3\pmod{8}.
    \end{cases}
  \end{equation}
\end{remark}

Given two modular forms of weight $k$ and characters $\chi$ and $\psi$ for $\Gamma_0(N)$, the following result of Sturm \cite{sturm} gives a criterion -- crucially, one involving a finite amount of calculation -- to determine when all of their coefficients are congruent modulo a given prime.
\begin{theorem} [\cite{sturm}]\label{sturmthe} Let $p$ be a prime number, and $f(z) = \sum_{n=n_0}^\infty a(n) q^n$ and $g(z) = \sum_{n=n_1}^\infty b(n) q^n$  be holomorphic modular forms of weight $k$ for $\Gamma_0(N)$ of characters $\chi$ and $\psi$, respectively, where $n_0,n_1\ge 0$.  If either $\chi=\psi$ and 
$$a(n) \equiv b(n) \pmod{p} \quad \text{for all} \quad n\le \frac{kN}{12}\cdot \prod_{d{\ }\emph{prime};{\ } d \vert N} \left(1+\frac{1}{d}\right),$$
or $\chi \neq \psi$ and
$$a(n) \equiv b(n) \pmod{p} \quad \text{for all} \quad n\le \frac{kN^2}{12}\cdot \prod_{d{\ }\emph{prime};{\ } d \vert N} \left(1-\frac{1}{d^2}\right),$$
then $f(z) \equiv g(z) \pmod{p}$ (i.e., $a(n) \equiv b(n) \pmod{p}$ for all $n$).
\end{theorem}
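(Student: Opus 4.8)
\emph{Proof proposal.} The plan is to reduce the two-form congruence to a single-form vanishing statement and then prove the governing bound as a mod-$p$ refinement of the classical valence formula. First I would set $h := f - g$. In the equal-character case $\chi = \psi$, the form $h$ is again a holomorphic modular form of weight $k$ and character $\chi$ for $\Gamma_0(N)$, and the hypothesis states that its coefficients vanish modulo $p$ for all $n \le B_0 := \frac{k}{12}\,[\mathrm{SL}_2(\mathbb{Z}):\Gamma_0(N)]$, where $[\mathrm{SL}_2(\mathbb{Z}):\Gamma_0(N)] = N\prod_{d \mid N,\, d \text{ prime}}(1+1/d)$ is exactly the index producing the stated bound. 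It therefore suffices to prove the core claim that a holomorphic weight-$k$ form on $\Gamma_0(N)$ whose coefficients are $\mathfrak p$-integral (for a fixed prime $\mathfrak p \mid p$ of a number field $K$ containing all coefficients) and whose reduction $\bar h := h \bmod \mathfrak p$ is nonzero cannot vanish at the cusp $\infty$ to $q$-order exceeding $B_0$. After dividing $h$ by a suitable power of a uniformizer we may assume some coefficient is a $\mathfrak p$-unit, so that $\mathrm{ord}_q(\bar h)$ is literally the index of the first surviving coefficient.

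The level-one input I would use is a clean integral form of the valence formula. Over $\mathrm{SL}_2(\mathbb{Z})$ the space $M_w$ of holomorphic weight-$w$ forms admits the Victor Miller echelon basis $f_0, \dots, f_{d-1}$ with $f_i = q^i + O(q^d)$ and \emph{integer} coefficients, where $d = \dim M_w = \lfloor w/12\rfloor + 1$ or $\lfloor w/12 \rfloor$. Reducing modulo $\mathfrak p$ preserves the distinct leading orders $q^0, \dots, q^{d-1}$, so any nonzero mod-$\mathfrak p$ weight-$w$ form on $\mathrm{SL}_2(\mathbb{Z})$ has $q$-order at most $d - 1 \le w/12$. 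This handles every prime uniformly, including $p = 2$, without invoking the mod-$p$ structure theory.

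To descend from $\Gamma_0(N)$ to level one I would form the norm $H := \prod_{i=1}^{m} h\mid_k \gamma_i$ over coset representatives $\gamma_1 = I, \gamma_2, \dots, \gamma_m$ of $\Gamma_0(N)\backslash \mathrm{SL}_2(\mathbb{Z})$, with $m = [\mathrm{SL}_2(\mathbb{Z}):\Gamma_0(N)]$; this $H$ is a holomorphic weight-$km$ form for $\mathrm{SL}_2(\mathbb{Z})$ with $\mathfrak p$-integral coefficients. Since reduction commutes with the slash action and slashing is injective, $\bar h \ne 0$ forces every $\overline{h\mid_k\gamma_i} \ne 0$, and as the relevant Puiseux series ring is a domain this gives $\bar H \ne 0$. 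By the level-one bound, $\mathrm{ord}_q(\bar H) \le km/12 = B_0$; but the factor indexed by $\gamma_1 = I$ already contributes $\mathrm{ord}_q(\bar h) > B_0$ while the others contribute nonnegatively, so $\mathrm{ord}_q(\bar H) > B_0$, a contradiction. For the unequal-character case $\chi \ne \psi$ the difference $h$ is not a form on $\Gamma_0(N)$ with a single character, so I would instead regard $f$ and $g$ as forms on the smaller group $\Gamma_1(N)$, on which both are modular and $h$ is a genuine weight-$k$ form; running the same argument replaces the index by $[\mathrm{SL}_2(\mathbb{Z}):\Gamma_1(N)] = N^2\prod_{d\mid N,\, d\text{ prime}}(1-1/d^2)$, which is precisely the larger bound $\frac{kN^2}{12}\prod_{d \mid N}(1-1/d^2)$ recorded in the theorem, the inflation factor $\phi(N) = N\prod_{d\mid N}(1-1/d)$ being exactly the cost of forgetting the character.

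The main obstacle, and I expect the only genuinely delicate point, is the mod-$\mathfrak p$ bookkeeping in the descent. The slashed forms $h\mid_k\gamma_i$ have expansions at the various cusps in fractional powers $q^{1/h_i}$ with coefficients involving roots of unity, so ``reduction modulo $\mathfrak p$'' must be set up over a large enough extension, with a compatible choice of prime, in order that reduction genuinely commutes with slashing and that $\bar H$ is the reduction of the integral-structure form $H$ whose order I control at level one; one must also verify that forming the norm introduces no denominators at $\mathfrak p$. A secondary subtlety is that when $p \mid N$ the modular curve $X_0(N)$ has bad reduction, so the geometric interpretation over $\overline{\mathbb F}_p$ is unavailable — which is exactly why I route everything through the characteristic-zero valence formula applied to a $\mathfrak p$-integral model and reduce only at the end, keeping the argument valid for all primes, in particular the case $p = 2$ of interest throughout this paper. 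Once these reductions are arranged the argument is the classical one of Sturm, and the remaining steps are routine.
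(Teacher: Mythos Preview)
The paper does not prove this statement: Theorem~\ref{sturmthe} is quoted as background from \cite{sturm} and used as a black box (the authors apply it repeatedly to verify congruences by checking coefficients up to the computed bound). There is therefore no proof in the paper to compare your proposal against.

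That said, your sketch is essentially Sturm's original argument. The reduction to a single form $h = f - g$, the norm over coset representatives $H = \prod_i h\mid_k\gamma_i$ to descend to level~$1$, and the level-one valence bound $\mathrm{ord}_q \le k/12$ are exactly the ingredients of \cite{sturm}; your handling of the $\chi \ne \psi$ case by passing to $\Gamma_1(N)$ is the standard way to recover the $N^2\prod(1-1/d^2)$ bound. Your use of the Victor Miller echelon basis to get the level-one bound uniformly in $p$ is a clean alternative to quoting the mod-$p$ valence formula directly. The integrality and root-of-unity issues you flag in the descent are real and are precisely what Sturm handles by working over a suitable number ring and checking that the norm has $\mathfrak p$-integral Fourier coefficients; once those are arranged the argument goes through as you describe.
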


We may sometime refer to the bound calculated above as the \emph{Sturm bound}.

In order to use Sturm's Theorem we require that the modular forms being compared be holomorphic, i.e., vanishing to nonnegative orders at the cusps, a representative set of rational values.  These orders are determined by the following theorem of Ligozat:
\begin{theorem}[\cite{Ligo,Ono}]\label{ligozat} Let $c$, $d$ and $N$ be integers such that $d \vert N$ and $\gcd(c,d)=1$.  Then if $f$ is an $\eta$-quotient satisfying the conditions of Theorem \ref{ghn} for $\Gamma_0(N)$, the order of $f$ at the cusp $\frac{c}{d}$ is
$$\frac{N}{24} \sum_{\delta \vert N} \frac{\left(\gcd(d,\delta)\right)^2 r_\delta}{\gcd \left(d,\frac{N}{d}\right) d \delta}.$$
\end{theorem}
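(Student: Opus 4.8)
The plan is to reduce the computation to a single factor $\eta(\delta z)$ and exploit the transformation law of $\eta$ under $\SL_2(\Z)$. Recall that for $\gamma = \begin{pmatrix} A & B \\ C & D\end{pmatrix} \in \SL_2(\Z)$ with $C>0$ one has $\eta(\gamma \tau) = \varepsilon(\gamma)\,(-i(C\tau+D))^{1/2}\eta(\tau)$, where $\varepsilon(\gamma)$ is a $24$-th root of unity (given explicitly by Dedekind sums, but whose precise value will be irrelevant). Since the order of a product at a cusp is the sum of the orders of the factors, and since $f = \prod_{\delta \vert N}\eta(\delta z)^{r_\delta}$, it suffices to determine the order of each $\eta(\delta z)$ at $\tfrac cd$ and then weight by $r_\delta$ and sum.

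To analyze the cusp $\tfrac cd$, I would choose a scaling matrix $\sigma = \begin{pmatrix} c & a \\ d & b \end{pmatrix} \in \SL_2(\Z)$ (possible since $\gcd(c,d)=1$) with $\sigma(\infty) = \tfrac cd$, and study the expansion of $\eta(\delta\sigma w)$ as $w\to i\infty$. Writing the dilation $z\mapsto \delta z$ as the matrix $\begin{pmatrix}\delta & 0\\0&1\end{pmatrix}$, one computes $\begin{pmatrix}\delta & 0\\0&1\end{pmatrix}\sigma = \begin{pmatrix}\delta c & \delta a\\ d & b\end{pmatrix}$, a matrix of determinant $\delta$ whose first column has $\gcd$ equal to $g := \gcd(\delta c, d) = \gcd(\delta, d)$; the last equality, checked prime-by-prime, is exactly where the hypothesis $\gcd(c,d)=1$ enters and is what makes the dependence on $c$ disappear. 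Dividing out $g$ from the first column, I can factor $\begin{pmatrix}\delta c & \delta a\\ d & b\end{pmatrix} = \gamma_\delta M$ with $\gamma_\delta = \begin{pmatrix}A & B\\ C & D\end{pmatrix}\in \SL_2(\Z)$ (so $C = d/g$) and $M = \begin{pmatrix} g & h\\ 0 & \delta/g\end{pmatrix}$ upper triangular of determinant $\delta$.

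The leading behavior is now transparent. Setting $\tau = Mw = (g^2 w + gh)/\delta$, the transformation law gives $\eta(\delta\sigma w) = \eta(\gamma_\delta \tau) = \varepsilon(\gamma_\delta)\,(-i(C\tau + D))^{1/2}\eta(\tau)$, and $\eta(\tau) = q_\tau^{1/24}\prod_{n\ge 1}(1-q_\tau^n)$ has leading term $q_\tau^{1/24} = e^{2\pi i\tau/24}$. Hence the leading power of $q_w = e^{2\pi i w}$ in $\eta(\tau)$ is $q_w^{g^2/(24\delta)}$; the multiplier $\varepsilon(\gamma_\delta)$ is a unit and the automorphy factor $(-i(C\tau+D))^{1/2}$ grows only polynomially in $w$, so neither perturbs the leading $q_w$-exponent. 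Thus $\eta(\delta z)$ contributes leading exponent $\gcd(\delta,d)^2/(24\delta)$. Finally I would convert to the order with respect to the local uniformizer at the cusp: the width of $\tfrac cd$ in $\Gamma_0(N)$ is $h_{c/d} = N/\gcd(d^2,N) = N/(d\,\gcd(d,N/d))$, and expanding in $q_w^{1/h_{c/d}}$ multiplies each $q_w$-exponent by $h_{c/d}$. Summing over $\delta\vert N$ with weights $r_\delta$ then yields $\tfrac{N}{24}\sum_{\delta\vert N}\frac{\gcd(d,\delta)^2 r_\delta}{\gcd(d,N/d)\,d\,\delta}$, which is precisely the asserted formula.

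The genuinely delicate parts are the bookkeeping of the multiplier system and the width normalization, rather than the $\eta$-transformation itself. One must confirm that $\varepsilon(\gamma_\delta)$ together with the half-integral automorphy factor contribute only a nonvanishing analytic prefactor, so that the leading $q$-exponent is truly unchanged, and one must verify that the correct local parameter is $q_w^{1/h_{c/d}}$ with $h_{c/d} = N/(d\,\gcd(d,N/d))$. I expect pinning down this width normalization exactly to be the main obstacle to reproducing the formula with the correct constants.
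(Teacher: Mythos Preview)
The paper does not supply its own proof of this statement: Theorem~\ref{ligozat} is quoted as a background result from the literature (Ligozat \cite{Ligo}; see also Ono \cite{Ono}), and is used throughout only as a black box to check holomorphicity at the cusps. There is therefore nothing in the paper to compare your argument against.

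That said, your sketch is the standard derivation one finds in those references. The reduction to a single factor $\eta(\delta z)$, the factorization $\begin{pmatrix}\delta & 0\\0&1\end{pmatrix}\sigma = \gamma_\delta M$ with $\gamma_\delta\in\SL_2(\Z)$ and $M$ upper triangular of determinant $\delta$, and the identification of the leading exponent $\gcd(\delta,d)^2/(24\delta)$ are all correct and constitute the heart of the proof. Your own caveat is well placed: the one genuinely fiddly point is the width computation $h_{c/d} = N/(d\,\gcd(d,N/d))$, which must be checked carefully (it follows from analyzing when $\sigma^{-1}\begin{pmatrix}1&h\\0&1\end{pmatrix}\sigma\in\Gamma_0(N)$), and the verification that the multiplier $\varepsilon(\gamma_\delta)$ and the automorphy factor contribute only a nonvanishing holomorphic prefactor. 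Both of these are routine once stated, and your outline would complete to a full proof with those details filled in.
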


Given a prime $p$, the \emph{Hecke operator} $T_p$ (in fact, $T_{p,k,\chi}$, but the latter two values are usually suppressed when they are clear from context) is a $\mathbb{C}$-vector space endomorphism on $M_k(\Gamma_0(N),\chi)$.  For $$f(z) = \sum_{n=0}^\infty a(n) q^n \in M_k(\Gamma_0(N),\chi),$$ the action of this operator is
$$f \vert T_p := \sum (a(pn) + \chi(p) p^{k-1}a(n/p)) q^n,$$
where we take by convention that $a(n/p) = 0$ whenever $p \nmid n$.  

If $f$ is an $\eta$-quotient with the properties listed in Theorem \ref{ghn}, and $p \vert s$ as defined above, then $\chi(p) = 0$ so that the latter term vanishes.  In this case, we have the factorization property that $$\left( f \cdot \sum_{n=0}^\infty g(n) q^{pn} \right) \vert T_p = \left( \sum_{n=0}^{\infty} a(pn)q^n \right) \left(\sum_{n=0}^\infty g(n) q^n \right).$$

Another operator we will need is the \emph{$V$ operator}.  Given $f(z) = \sum_{n=n_0}^\infty a(n) q^n \in M_k(\Gamma_0(N),\chi)$ and a positive integer $d$, define $$f(z) \vert V(d) := \sum_{n=n_0}^\infty a(n) q^{dn}.$$ 

Then it holds (see \cite{Ono}, Proposition 2.22) that $$f(z) \vert V(d) \in M_k(\Gamma_0(Nd),\chi).$$

\section{Case $m=3$}\label{threesec}

In this section, we present the proof of Theorem \ref{b3thm}, stated in Section \ref{listing}.

\begin{proof}[Proof of Theorem \ref{b3thm}]  A crucial tool, which we shall use in many variations, is the following identity (\cite{JKZ}, equation (12)):
\begin{equation}\label{onethree}
(f_1 f_3)^3 \equiv {f_1}^{12} + q {f_3}^{12}.
\end{equation}

Dividing through by ${f_1}^4 {f_3}^2$, we obtain an even-odd dissection of the 3-regular partitions:
\begin{align}\label{3reg}
\frac{f_3}{f_1} &\equiv \frac{f_1^8}{f_3^2} + q \frac{f_3^{10}}{f_1^4}.
\end{align}

By the identity ${f_1}^2 \equiv f_2$ (see Lemma \ref{222}), on which we will rarely further remark explicitly to avoid heavy repetition, the first term on the right side contributes nonzero terms modulo 2 only to even powers of $q$, while the second term only to odd powers.  

Thus, by extracting those even powers and halving them, we obtain: $$\sum_{n=0}^\infty b_3(2n) q^n \equiv \frac{{f_1}^4}{f_3}.$$

A simple algebraic analysis shows the following identity which 3-dissects the triangular numbers (see \cite{Hirsch2}, Chapter 1): 
\begin{equation}\label{onecubed}
{f_1}^3 \equiv f_3 + q{f_9}^3.
\end{equation}

From this and Lemma \ref{corelemma}, we get:
$$\frac{{f_1}^4}{f_3} \equiv f_1 + q f_1 \frac{{f_9}^3}{f_3} $$$$\equiv f_1 \left( 1 + q \sum_{n \in \mathbb{Z}} q^{3n(3n-2)} \right) \equiv f_1 \left( 1 + \sum_{n \in \mathbb{Z}} q^{(3n-1)^2} \right).$$

The second congruence follows from the fact that $\frac{{f_9}^3}{f_3}$ is simply the 3-core generating function $\frac{{f_3}^3}{f_1}$ under the substitution $q \rightarrow q^3$, which we will henceforth refer to as being \emph{magnified} by 3.

Therefore, $b_3(2n)$ can only be odd if $n$ is the sum of a pentagonal number and either zero or a square not divisible by 3.  Thus, as a product of two quadratic forms, Lemma \ref{laclemma} gives lacunarity modulo 2 for $b_3(2n)$. We now proceed to identify arithmetic subprogressions on which this is identically zero modulo 2.

Modulo any prime $p^2$ with $p>3$, completing the square yields that the pentagonal numbers are of the form $$\frac{3}{2}x^2 - 24^{-1},$$ where $2^{-1}$ and $24^{-1}$ are taken modulo $p^2$.  That is, let $p > 3$ be prime.  Then we have the following:
\begin{align*} n &\equiv \frac{k}{2}(3k-1) \pmod{p^2} \\
 &\equiv (3 \cdot 2^{-1} ) (k^2 - 3^{-1} k) \pmod{p^2} \\
 &\equiv (3 \cdot 2^{-1} ) (k^2 - 3^{-1} k + 36^{-1}) - 24^{-1} \pmod{p^2} \\
 &\equiv (3 \cdot 2^{-1} ) (k - 6^{-1} )^2 - 24^{-1} \pmod{p^2}.
\end{align*}

As for squares not divisible by 3, we observe that the divisibility criterion is irrelevant when noting that  the sequences can eventually intersect any residue class modulo $p^2$ that is a quadratic residue, but not any quadratic nonresidue modulo $p^2$.

Let $a \equiv \frac{3}{2}x^2 - 24^{-1} \pmod{p^2}$ and $b \equiv y^2 \pmod{p^2}$, for $x, y \in \mathbb{Z}$, and write $n = a+b$.  

Now suppose $n \equiv kp - 24^{-1} \pmod{p^2}$.  Then $\frac{3}{2}x^2 + y^2 \equiv 0 \pmod{p}$.  This is possible if $x, y \equiv 0 \pmod{p}$, in which case $n \equiv -24^{-1} \pmod{p^2}$.  

Conversely, $x \not\equiv 0 \pmod{p}$ if and only if $y \not\equiv 0 \pmod{p}$, and if so, then we may write $$\frac{y^2}{x^2} \equiv \frac{-3}{2} \pmod{p^2}.$$
But this can only be true if $-6$ is a quadratic residue modulo $p$.  If not, then no such $x$ and $y$ can exist.

Therefore, if $(-6/p) = -1$, we have that $$b_3(2(p^2 n + kp - 24^{-1})) \equiv 0,$$ for all $n\ge 0$ and $1 \leq k \le p-1$. Since $-6$ is a quadratic nonresidue precisely for the primes $p \equiv 13, 17, 19, 23 \pmod{24}$, Theorem \ref{b3thm} is proved. \end{proof}

We next show the specific case of Conjecture \ref{threeconj} stated in Theorem \ref{threesim}.

\begin{proof}[Proof of Theorem \ref{threesim}] This proof employs a congruence argument for modular forms.  In order to prevent substantial repetition we give one full example of such an argument in the next section.  Here, we provide sufficient detail to reproduce the proof for this theorem.

We construct modular forms $$C_1(q) := q^6 \frac{{f_1}^4}{f_3} {f_{39}}^2 {f_{13}}^5 \equiv q^6 \left( \sum_{n=0}^\infty b_3(2n) q^n \right) {f_{39}}^2 {f_{13}}^5$$ and $$C_2(q) := q \frac{{f_{13}}^4}{f_{39}} {f_3}^2 {f_1}^5 \equiv q \left( \sum_{n=0}^\infty b_3(2n) q^{13n} \right) {f_3}^2 {f_1}^5.$$  We then calculate that $$C_1 \vert T_{13} \equiv q \left( \sum_{n=0}^\infty b_3(26n+14) q^n \right) {f_3}^2 {f_1}^5 .$$

We find that $C_1$, $C_1 \vert T_{13}$, and $C_2$ are all modular forms of weight 5, level 117, and character $\chi = \left( \frac{-39}{d} \right)$.  

Thus, we verify holomorphicity by the criterion of Theorem \ref{ligozat}, and apply Sturm's Theorem \ref{sturmthe} with bound 70 to determine, after a brief calculation, that the two forms $C_1 \vert T_{13}$ and $C_2$ are congruent in all coefficients.  Dividing out the factors $q {f_3}^2 {f_1}^5$, we establish the congruence of the desired sequences.

Since only powers for which $13 \vert n$ can be nonzero on the right side of the statement, we obtain: $$b_3(26(13n+1) + 14) = b_3(2 \cdot 13^2 n + 40) \equiv 0.$$

Finally, by repeatedly applying the relation $$b_3(2n) \equiv b_3(26 \cdot 13n + 14),$$
we get:
\begin{align*} b_3(2 \cdot 13^2 n + 40) &\equiv b_3(2\cdot 13^2(13^2n+20) + 14) \\
 &= b_3(2 \cdot 13^4 n +  13^2 \cdot 40 + 14) \\
 &\equiv b_3(2 \cdot 13^6 n + 13^4 \cdot 40 + 13^2 \cdot 14 + 14) \\
 &\equiv ... \\
 &\equiv b_3 \left(2 \cdot 13^{2k} n + 13^{2k-2}\cdot 40 + 14 ((13^{2k-2} - 1)/168) \right),
\end{align*}
where the last line is given by a finite geometric summation.
\end{proof}

\section{Case $m=9$}\label{ninesec}

\begin{proof}[Proof of Theorem \ref{b9thm}] The even-odd dissection of $B_9$ is given by \cite{xiayao9}, Lemma 2.1: $$\sum_{n=0}^\infty b_9(n) q^n = \frac{f_6^5}{f_4 f_{18}} + q \frac{f_2 f_9^4}{f_6}.$$

Thus,
$$\sum_{n=0}^\infty b_9(2n) q^n \equiv \frac{f_3^5}{f_2 f_9};$$
$$\sum_{n=0}^\infty b_9(2n+1) q^n \equiv \frac{f_1 f_9^2}{f_3}.$$

Each clause of Theorem \ref{b9thm} is proven by establishing a congruence of parity between the modular forms $C_p(q) \vert T_p$ and $$q^{r_p} \frac{f_p f_{9p}^2}{f_{3p}} f_1^{j_p}$$ for $p = 17$, 19, and 37, where $$C_p = q^{d_p} \left( \sum_{n=0}^\infty b_9(2n+1)q^n \right) f_p^{j_p},$$
and
$$(d_{17},d_{19},d_{37}) = (12,7,13);$$
\begin{equation}\label{drj}
(r_{17},r_{19},r_{37}) = (12,13,24);
\end{equation}
$$(j_{17},j_{19},j_{37}) = (16,8,8).$$

We will prove the theorem for $p=17$ in detail.  The calculations are similar for the other values of $p$.

We construct:
$$C_1(q) := q^{12} \left( \sum_{n=0}^\infty b_9(2n+1) q^n \right) f_{17}^{16} = \frac{\eta(z) \eta(9z)^2\eta(17z)^{16}}{\eta(3z)}.$$

By Theorem \ref{ghn}, we compute that this is an element of $M_9 (\Gamma_0(3^3 \cdot 17), \chi_1(d) )$, where $\chi_1(d) = \left( \frac{-27 \cdot 17^{16}}{d} \right)$.  This character is simply $\left( \frac{-3}{d} \right)$ for $17 \nmid d$, and 0 if $17 \vert d$.

We verify by Theorem \ref{ligozat} that the form vanishes to nonnegative order at all cusps, and therefore it is holomorphic.  Since the Hecke operator is an endomorphism on $M_k(\Gamma_0(N),\ \chi)$, we have that $$C_1 \vert T_{17} = q \left( \sum_{n=0}^\infty b_9(34n+11) q^n \right) {f_1}^{16} \in M_9(\Gamma_0(27 \cdot 17), \chi_1(d) )$$ \noindent as well.  By Theorem \ref{sturmthe}, the Sturm bound for this space of forms is 486 as long as we are comparing forms with the same character.  

For the right side of our comparison, we construct $$G_1(q) := q^{12} \frac{f_{17} {f_{153}}^2}{f_{51}} {f_1}^{16}.$$

We compute, again by Theorem \ref{ghn}, that this is also an element of $M_9(\Gamma_0(27 \cdot 17), \chi(d) )$, noting that the character is
$$\chi (d) = \left(\frac{-3^3 \cdot 17^2}{d}\right) =\chi_1(d).$$

Ligozat's Theorem \ref{ligozat} then confirms holomorphicity.

We wish to verify the congruence $$q \left( \sum_{n=0}^\infty b_9(34n+11) q^n \right) {f_1}^{16} \equiv q^{12} \frac{f_{17} {f_{153}}^2}{f_{51}} {f_1}^{16}.$$  The coefficient of $q^{486}$ on the left side involves the value $b_9(16535)$; thus, $f_9 / f_1$ must be expanded at least that far, and the product on the right side must be constructed up to the $q^{486}$ term. Finally, expansion with a calculation package such as \emph{Mathematica} confirms that all coefficients up to the desired bound are congruent, and the theorem is established.

The argument is similar for the other two primes, with the necessary values being listed in equations (\ref{drj}). \end{proof}

We can obtain several corollaries by observing that $$\frac{f_p f_{9p}^2}{f_{3p}}$$ is the $q \rightarrow q^p$ magnification of the odds in $B_9$. That is,
$$\frac{f_p{f_{9p}}^2}{f_{3p}} = \sum_{n=0}^\infty b_9(2n+1) q^{pn},$$
which clearly has coefficient 0 in any term $q^k$ with $p \nmid k$.  This self-similarity can thus establish families of progressions modulo powers of the primes involved, from which we deduce the following corollary:
\begin{corollary}\label{b9cor} For $p \in \{17, 19, 37\}$, let $\{ 2 p^{2k} n + B_k \}$ be the sequence obtained by iteratively taking $n \equiv \beta \pmod{p}$ in the prior sequence of index $k-1$, with $k=1$ being the sequence described on the right side of the congruence in Theorem \ref{b9thm}, in which $\beta$ and $\alpha$ are defined.  Then, for any $n \equiv pj + (\alpha-1)/2 \pmod{p^2}$ such that $j \not\equiv \beta \pmod{p}$, we have: $$b_9(2 p^{2k} n  + B_k) \equiv 0 .$$
\end{corollary}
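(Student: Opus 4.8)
The plan is to deduce the corollary purely from Theorem \ref{b9thm} by comparing coefficients, with no further modular-form machinery. Set $g(q) := \sum_{n\ge 0} b_9(2n+1)q^n$, so that Theorem \ref{b9thm} reads $\sum_{n\ge 0} b_9(2pn+\alpha)\,q^n \equiv q^\beta g(q^p)$. Comparing the coefficient of $q^m$ on the two sides extracts two facts simultaneously. First, if $m\not\equiv\beta\pmod p$, the coefficient on the right vanishes, so $b_9(2pm+\alpha)\equiv 0$. Second, writing $m = p\mu+\beta$ gives
$$b_9(2p^2\mu + C) \equiv b_9(2\mu+1) \qquad \text{for all } \mu \ge 0,$$
where $C := 2p\beta+\alpha$; here $(\alpha-1)/2$ and $(C-1)/2$ are integers because $-3^{-1}\bmod 2p$ is odd, as already noted. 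The second fact says precisely that $g$, decimated along the index progression $p^2\mu + (C-1)/2$, reproduces $g$ itself modulo $2$, the magnification $q^\beta$ of Theorem \ref{b9thm} cancelling after this two-fold use.

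Next I would iterate this self-similarity. Composing the substitution $\mu \mapsto p^2\mu + (C-1)/2$ with itself $k$ times and collapsing the resulting finite geometric sum gives
$$b_9(2p^{2k}n + B_k) \equiv b_9(2n+1) \qquad \text{for all } n \ge 0,\ k \ge 1,$$
where $B_k := 1 + (C-1)\dfrac{p^{2k}-1}{p^2-1}$; in particular $B_1 = C$, and one checks that this $B_k$ is exactly the constant produced by the recursive construction in the statement (passing from the index-$(k-1)$ progression to the index-$k$ one by imposing $n\equiv\beta\pmod p$ is one more application of the relation above, entirely analogous to the iterations carried out for Theorems \ref{threesim} and \ref{b19thm}).

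Finally I would feed the first fact into the right-hand side. We have $b_9(2n+1)\equiv 0$ whenever $2n+1$ has the form $2pj+\alpha$ with $j\not\equiv\beta\pmod p$, i.e.\ whenever $n\equiv pj+(\alpha-1)/2\pmod{p^2}$ for some $j\not\equiv\beta\pmod p$ -- which is exactly the hypothesis imposed on $n$. Combining the two displays then yields $b_9(2p^{2k}n+B_k)\equiv b_9(2n+1)\equiv 0$, as claimed.

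I do not expect a genuine obstacle here: all the substance is in Theorem \ref{b9thm}, and what remains is bookkeeping. The only points needing mild care are verifying that $(\alpha-1)/2$ is an honest integer (so the progressions are well defined), checking that the recursively defined $B_k$ agrees with the closed form coming from the geometric summation, and confirming that the vanishing condition of Theorem \ref{b9thm}, stated modulo $p$ in $m$, transcribes to the stated condition modulo $p^2$ in $n$.
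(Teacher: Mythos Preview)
Your proposal is correct and follows essentially the same route as the paper: extract from Theorem \ref{b9thm} both the vanishing $b_9(2pm+\alpha)\equiv 0$ for $m\not\equiv\beta\pmod p$ and the self-similarity $b_9(2p^2\mu+C)\equiv b_9(2\mu+1)$, iterate the latter, and feed the former in. The paper carries this out only by way of the $p=19$ example (deriving $B_1=481$ and $B_k = 1 + 480\,(19^{2k}-1)/360$); your version is a more explicit general writeup of the same argument, including the closed form $B_k = 1 + (C-1)(p^{2k}-1)/(p^2-1)$ and the verification that the hypothesis on $n$ modulo $p^2$ translates to the required condition modulo $p$ on the inner index.
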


As an example with base sequence $\{b_9(38n+25)\}$, if $n \not\equiv 12 \pmod{19}$, then $b_9(38n+25) \equiv 0 $. Thus,  for instance, $$b_9(38(19n+7)+25) = b_9(722n+291) = b_9(2(361n+145)+1) \equiv 0 .$$

Our base case $k=1$ is the sequence $$b_9(38(19n+12)+25)  = b_9(722n + 481) \equiv b_9(2n+1) .$$ By iterating, we obtain $$b_9(2n+1) \equiv b_9(722n+481) \equiv \dots \equiv b_9 \left( 2 \cdot 19^{2k} n + 480 ((19^{2k}-1)/360) + 1\right).$$

By taking now the $n \equiv 145 \pmod{361}$ subsequence, we obtain new families of even progressions, of which the following is the next example for $k=2$: $$b_9(722(361j+145) + 481) = b_9 (2 \cdot 19^4 j + 105171) \equiv 0.$$ The same process can be performed for the other progressions.

This yields an infinite collection of such congruences for each prime for which the theorem is proved.  In fact, interestingly, we believe that this self-similarity holds for all primes $p \equiv \pm 1 \pmod{9}$ (see Conjecture \ref{b9conj}).

\section{Case $m=19$}

\begin{proof}[Proof of Theorem \ref{b19thm}] We note that once the self-similarity of Theorem \ref{b19thm} is proved, the even progressions and the  iterated infinite families follow by the same argument that we employed for Corollary \ref{b9cor}.

Because we do not have a convenient even-odd dissection for $b_{19}$, the proof begins with a pair of Hecke operators. For the right side, we construct:
$$C_1(q) := q^{52} \frac{f_{19}}{f_1} f_5^{246} = \frac{\eta(19z) \eta(5z)^{246}}{\eta(z)}.$$
We compute that this is an element of $M_{123}(\Gamma_0(95),\chi_1)$, where
$$\chi_1(d) = \left( \frac{-19 \cdot 5^{246}}{d} \right) = \left(\frac{-19}{d}\right)$$
if $5 \nmid d$, and $\chi_1(d)=0$ if $5 \vert d$.

We now apply Hecke operators to construct $$G(q) := C_1(q) \vert T(2) \vert V(5) \equiv q^{130} \left( \sum_{n=0}^\infty b_{19} (2n) q^{5n} \right) f_{25}^{123}.$$
We verify that this is an element of $M_{123}(\gamma_0(19 \cdot 2 \cdot 5^2),\chi_1)$.

For the left side, we construct $$C_2(q) := q^{1282} \frac{f_{19}}{f_1} {f_{125}}^{246} = \frac{\eta(19z) \eta(125z)^{246}}{\eta(z)}.$$
We compute that this is an element of $M_{123}(\Gamma_0(19 \cdot 5^3), \chi_2)$, where
$$\chi_2(d) = \left( \frac{-19 \cdot 5^{738}}{d} \right) = \chi_1(d).$$

We now apply a different sequence of Hecke operators, to obtain: $$H(q) := C_2(q) \vert T(2) \vert T(5) \equiv q^{129} \left( \sum_{n=0}^\infty b_{19}(10n+8) q^n \right) {f_{25}}^{123},$$ \noindent and we verify that this is an element of $M_{123}(\Gamma_0(19 \cdot 2 \cdot 5^3), \chi_1)$.

Theorem \ref{ligozat} confirms that both initial forms are holomorphic, and the Hecke operators preserve this property.  Thus, $G(q)$ and $H(q)$ may be compared as elements of $M_{123}(\Gamma_0(19 \cdot 2 \cdot 5^3), \chi_1)$, for which the Sturm bound (Theorem \ref{sturmthe}) is given by:
$$\frac{123}{12} (19 \cdot 2 \cdot 5^3)  \left(1 +  \frac{1}{2} \right) \left(1+ \frac{1}{5} \right) \left(1+ \frac{1}{19} \right) = 92250.$$
Hence, in order to confirm the desired congruence, we must calculate terms of $b_{19}$ up to $q^{922508}$.

This was done over the course of several days on a desktop computer, and the theorem is confirmed.  The iterative results follow. \end{proof}

\begin{remark} The relatively large Sturm bound is the main reason we did not choose to confirm more cases from Conjecture \ref{b19conj}.\end{remark}

\section{Case $m= 21$}\label{twentyonesec}

In this section, we prove Theorems \ref{21in40} and \ref{21in41}.  We begin with some preliminaries.  In addition to identity (\ref{3reg}), we require the following, which is easily seen to be equivalent to \cite{Lin}, equation 2.4:
\begin{align}\label{7reg}
\frac{f_7}{f_1} &\equiv f_1^6 + q f_1^2 f_7^4 + q^2 \frac{f_7^8}{f_1^2}.
\end{align}

Magnifying equation (\ref{7reg}) by $q \rightarrow q^3$, we can now dissect $B_{21}$ as:
\begin{align*} \frac{f_{21}}{f_1} &= \frac{f_3}{f_1}\frac{f_{21}}{f_3} \equiv \left( \frac{f_1^8}{f_3^2} + q \frac{f_3^{10}}{f_1^4}\right) \left(  f_3^6 + q^3 f_3^2 f_{21}^4 + q^2 \frac{f_{21}^8}{f_3^2}\right).
\end{align*}

Extracting  terms that are even and odd, and then further extracting those that are, respectively, 0 and 1 modulo 4, we obtain the following:
$$\sum_{n=0}^\infty b_{21}(2n) q^n \equiv f_1^4f_3^2 + q^3 \frac{f_1^4 f_{21}^4}{f_3^2} + q^2 \frac{f_3^6 f_{21}^2}{f_1^2};$$
$$\sum_{n=0}^\infty b_{21}(4n) q^n \equiv f_1^2 f_3 + q \left( \frac{f_3^3}{f_1} \right) f_{21};$$
$$\sum_{n=0}^\infty b_{21}(2n+1) q^n \equiv \frac{f_3^8}{f_1^2} + q^3 \frac{f_3^4 f_{21}^4}{f_1^2} + q f_1^4 f_{21}^2;$$
$$\sum_{n=0}^\infty b_{21}(4n+1) q^n \equiv \left( \frac{f_3^3}{f_1} \right) f_3 .$$

Because, by Lemma \ref{corelemma},
$$\frac{f_3^3}{f_1} \equiv \sum_{n \in \mathbb{Z}} q^{n(3n-2)},$$
we deduce from Lemma \ref{laclemma} that both $\sum_{n=0}^\infty b_{21}(4n) q^n$ and $\sum_{n=0}^\infty b_{21}(4n+1)q^n$ are lacunary modulo 2.  

As for the two progressions 2 and 3 (mod 4), we conjecture that their behavior modulo 2 is entirely different. We have:
\begin{conjecture} The series $\sum_{n=0}^\infty b_{21}(4n+2)q^n$ and $\sum_{n=0}^\infty b_{21}(4n+3)q^n$ have odd density $1/2$. Consequently, the 21-regular partition function has odd density $1/4$.
\end{conjecture}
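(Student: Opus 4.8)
The plan is to push the $4$-dissection begun in the text one step further, reduce each progression to an explicit eta-quotient modulo $2$, and only then invoke the machinery of modular forms modulo $2$. First I would extract the odd-power part of $\sum_{n\ge 0} b_{21}(2n)q^n$: of the three displayed terms, only $q^3 f_1^4 f_{21}^4/f_3^2 \equiv q^3 f_4 f_{84}/f_6$ contributes to odd exponents (the other two are pure series in $q^2$), and compressing $q^2\to q$ by Lemma \ref{222} gives
$$\sum_{n\ge 0} b_{21}(4n+2)\,q^n \equiv q\,\frac{f_1^2 f_{21}^2}{f_3}.$$
Similarly, from $\sum_{n\ge 0} b_{21}(2n+1)q^n$ the two odd-power terms $q f_1^4 f_{21}^2 \equiv q f_4 f_{42}$ and $q^3 f_3^4 f_{21}^4/f_1^2 \equiv q^3 f_{12} f_{84}/f_2$ survive, and compressing yields
$$\sum_{n\ge 0} b_{21}(4n+3)\,q^n \equiv f_1^2 f_{21} + q\,\frac{f_3^2 f_{21}^2}{f_1}.$$
Both right-hand sides are genuine eta-quotients that fail the hypothesis of Theorem \ref{cot}, and neither is a finite product of two theta series, so Lemma \ref{laclemma} does not apply. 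This is precisely the regime in which the main Conjecture \ref{mainconj} predicts odd density $1/2$.

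Next I would promote each reduced series to the reduction modulo $2$ of a holomorphic integer-weight modular form, exactly as in the proofs of Theorems \ref{threesim} and \ref{b9thm}: multiply by a suitable eta-product (chosen to be a modular form itself, and so that the product is congruent modulo $2$ to a true eta-quotient satisfying the congruences of Theorem \ref{ghn}), landing inside some $M_k(\Gamma_0(N),\chi)$. One then works within the finite-dimensional space of mod-$2$ reductions of level-$N$ forms, a module over the mod-$2$ Hecke algebra in the spirit of Serre and Gordon--Ono \cite{Serre,GO,CMSZ}. In this language the density of odd coefficients is governed entirely by the Hecke action, and to reach density exactly $1/2$ I would attempt to show (i) that our form is not annihilated by any finite family of Hecke operators — equivalently, that it is not a finite $\mathbb{F}_2$-combination of Hecke-theta series, hence not lacunary — and (ii) that the attached mod-$2$ Galois representation has image large enough that the Hecke eigenvalues equidistribute, pinning the proportion of $n$ with $c(n)$ odd to the value $1/2$.

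Granting the two density-$1/2$ statements, the final ``Consequently'' clause is immediate and elementary. The four residue classes modulo $4$ partition the integers; the classes $4n$ and $4n+1$ carry density $0$ by the lacunarity already established in this section; and each of $4n+2$ and $4n+3$ contributes half of its quarter of all integers. Hence $\delta_{21} = \tfrac14\bigl(0+0+\tfrac12+\tfrac12\bigr) = \tfrac14$.

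The hard part is steps (i)--(ii), and I expect it to be out of reach with present technology. Proving that a non-lacunary eta-quotient has even \emph{positive} odd density has never been achieved for any eta-quotient, as noted in the introduction, let alone fixing the density at exactly $1/2$; this is the same wall faced by the Parkin--Shanks conjecture for $p(n)$. The strongest available input, the bound of Bella\"iche \emph{et al.} \cite{BGS}, would guarantee only on the order of $\sqrt{x}/\log\log x$ odd values among the first $x$ coefficients, far short of the conjectural $x/2$. For this reason I regard the statement as genuinely conjectural: the dissection and modular-form reductions are routine, but the equidistribution and matching lower bound constitute the true obstacle and are not currently accessible.
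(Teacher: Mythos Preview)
The statement is a \emph{conjecture} in the paper, not a theorem: the paper offers no proof whatsoever, merely stating the conjecture immediately after establishing lacunarity for the $4n$ and $4n+1$ progressions. So there is no paper-proof to compare against. You correctly diagnose this, and your closing paragraph is exactly right: establishing positive odd density for any eta-quotient --- let alone the precise value $1/2$ --- is beyond current technology, as the paper itself emphasizes in the introduction and in the final section.

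Your dissection work is correct and in fact goes further than the paper, which stops at the $4n$ and $4n+1$ expressions and never writes down closed forms for the other two classes. I verified your extractions: from $\sum b_{21}(2n)q^n$ only the middle term $q^3 f_1^4 f_{21}^4/f_3^2$ contributes odd powers, giving $\sum b_{21}(4n+2)q^n \equiv q\, f_1^2 f_{21}^2/f_3$; and from $\sum b_{21}(2n+1)q^n$ the first term $f_3^8/f_1^2$ is even, leaving $\sum b_{21}(4n+3)q^n \equiv f_1^2 f_{21} + q\, f_3^2 f_{21}^2/f_1$. Both are correct modulo~2. Your derivation of the ``Consequently'' clause from the two density-$1/2$ statements plus the already-proved lacunarity of the $4n$ and $4n+1$ classes is also valid and elementary.

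Where you should be a bit more careful is in the modular-forms sketch: steps (i) and (ii) are not just hard, they are not even known to be well-posed in the sense you describe. In particular, ruling out lacunarity (step (i)) does not by itself imply positive density, and the ``equidistribution of Hecke eigenvalues mod 2 forces density $1/2$'' heuristic of step (ii) is not a theorem in any generality relevant here. But since you explicitly flag these steps as the inaccessible core and conclude that the statement is genuinely conjectural, your overall assessment aligns with the paper's.
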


We now show that each of the two lacunary progressions modulo 2, expectedly, contain even arithmetic progressions. Note that with $b_{21}(4n)$ we have the additional complication that there are two terms to study.

\begin{proof}[Proofs of Theorems \ref{21in40} and \ref{21in41}] We comment in detail only on Theorem \ref{21in40}; the proof for Theorem \ref{21in41} is similar to previous arguments.

For each term individually in $\sum_{n=0}^\infty b_{21}(4n) q^n$, the argument is in part similar to the proofs in Section \ref{threesec}; the new task is to connect several of them. Note that 24 is invertible modulo all the primes $p$ listed in the theorem.

We consider the term $f_1^2 f_3$.  Residues $a$ modulo $p^2$ that this series can reach must satisfy $$3x^2-x + \frac{9}{2}y^2 - \frac{3}{2}y \equiv a \pmod{p^2}$$ \noindent for some $x, y \in \mathbb{Z}$.  Letting $b \equiv x - 6^{-1} \pmod{p^2}$ and $c \equiv y - 6^{-1} \pmod{p^2}$, we find that $$a + 5 \cdot 24^{-1} \equiv 3 b^2 + \frac{9}{2} c^2 \pmod{p^2}.$$

Thus if $a \equiv -5 \cdot 24^{-1} \pmod{p}$, we must have $3b^2 + \frac{9}{2} c^2 \equiv 0 \pmod{p}$.

Suppose this relation to hold.  If either  $b$ or $c$ is zero modulo $p$, then the other must be as well, and then $a \equiv -5 \cdot 24^{-1} \pmod{p^2}$.  If neither is zero modulo $p$, then it holds that $$(b/c)^2 \equiv -3/2 \pmod{p^2}.$$ Therefore, $-6$ must be a quadratic residue modulo $p$.

We have that $-6$ is a quadratic nonresidue for primes $p \equiv 13, 17, 19, 23 \pmod{24}$.  Thus, for these primes, the series $f_1^2 f_3$ avoids $k p - 5\cdot24^{-1} \pmod{p^2}$ for any $k \not\equiv 0 \pmod{p}$.

We now consider the term $$q \left( \frac{f_3^3}{f_1} \right) f_{21}.$$
Residues $a$ modulo $p^2$ that this series can reach must satisfy $$a \equiv \frac{63}{2}x^2 - \frac{21}{2}x + 3y^2 -2y + 1 \pmod{p^2}$$ \noindent for some $x, y \in \mathbb{Z}$.  Letting $b \equiv x-6^{-1} \pmod{p^2}$ and $c \equiv y - 3^{-1} \pmod{p^2}$, we find that $$a + 5 \cdot 24^{-1} \equiv \frac{63}{2} b^2 + 3c^2 \pmod{p^2}.$$

Thus if $a \equiv -5 \cdot 24^{-1} \pmod{p}$, we must have $$\frac{63}{2} b^2 + 3c^2 \equiv 0 \pmod{p}.$$
By the same logic, if $b, c \not\equiv 0 \pmod{p}$, then $-2 \cdot 21^{-1}$ must be a quadratic residue modulo $p^2$, and $-42$ must be as well.

If we are seeking residue classes avoided by \emph{both} terms of $\sum_{n=0}^\infty b_{21}(4n+1)q^n$, then we may choose to require that $-6$ be a quadratic nonresidue, and therefore, in order to make $-42$ a quadratic nonresidue, we further desire that 7 be a quadratic residue.  The latter occurs for primes congruent to 1, 3, 9, 19, 25, or 27 (mod 28).

A quick use of the Chinese Remainder Theorem now shows that both sets of conditions are satisfied for the primes modulo 168 listed in Theorem \ref{21in40}. Thus, these subprogressions of $\{b_{21}(4n)\}$ are identically zero modulo 2.

As for the proof of Theorem \ref{21in41}, we just note that, with its single term involved, completing the square yields that reachable residues $a$ modulo $p^2$ satisfy:
$$a + 11 \cdot 24^{-1} \equiv 3b^2 + \frac{9}{2}c^2 \pmod{p^2}.$$
The theorem now follows along the previous lines. \end{proof}

\section{Case $m=25$}\label{twentyfivesec}

Our proofs for $b_{25}$ are by dissection, with a similar flavor to those of Section \ref{twentyonesec}.

\begin{proof}[Proof of Theorem \ref{b25thm}]   We begin with the identity \cite{JKZ}, equation (4) (based on \cite{Hirsch}, equation (13), or \cite{BBG}, p. 301). Namely, $$f_1 f_5 \equiv {f_1}^6 + q {f_5}^6.$$

This gives us an even-odd dissection of $B_5$ as: $$\frac{f_5}{f_1} \equiv {f_1}^4 + q \frac{{f_5}^6}{{f_1}^2}.$$
Thus, \begin{equation}\label{b25dissect}B_{25} = \frac{f_{25}}{f_1} = \frac{f_{25}}{f_5} \frac{f_5}{f_1} $$$$\equiv {f_1}^4 {f_5}^4 + q^6 {f_5}^4 \frac{{f_{25}}^6}{{f_1}^2} + q \frac{{f_5}^{10}}{{f_1}^2} + q^5 {f_1}^4 \frac{{f_{25}}^6}{{f_5}^2} .\end{equation}

This easily yields that:
$$\sum_{n=0}^\infty b_{25} (2n) q^n \equiv {f_1}^2 {f_5}^2 + q^3 \frac{{f_5}^2 {f_{25}}^3}{f_1}$$
and
$$\sum_{n=0}^\infty b_{25} (2n+1) q^n \equiv \frac{{f_5}^{5}}{{f_1}} + q^2 {f_1}^2 \frac{{f_{25}}^3}{{f_5}}$$
$$\equiv {f_1}^4 {f_5}^4 + q \frac{{f_5}^{10}}{{f_1}^2} + q^2 {f_1}^2 {f_5}^4 {f_{25}}^2 + q^7 \frac{{f_1}^2 {f_{25}}^8}{{f_5}^2}.$$

Note that in going from the first to the second line, the first two terms are the dissection of $\frac{{f_5}^5}{f_1}$, while in the latter two terms we reserve factors $q^2 {f_1}^2 {f_{25}}^2$ and expand $\frac{f_{25}}{f_5}$ as the 5-magnification of $B_5$ just above.

Applying (\ref{b25dissect}) again to the factors $f_{25} / f_1$ in $\sum_{n=0}^\infty b_{25}(2n) q^n$, we can now extract the even and odd terms to obtain the following: 
\begin{align}\label{254}
\sum_{n=0}^\infty b_{25}(4n) q^n &\equiv f_1 f_5 + q^2 \frac{{f_5}^6 f_{25}}{f_1} + q^4 {f_1}^2 {f_{25}}^4; \\
\sum_{n=0}^\infty b_{25} (4n+2) q^n &\equiv q {f_1}^2 {f_5}^3 f_{25} + q^4 {f_5}^3 \frac{{f_{25}}^4}{f_1}.
\end{align}

Using equation (\ref{fivecore}), we further decompose $\sum_{n=0}^\infty b_{25}(4n) q^n$ as: 
$$\sum_{n=0}^\infty b_{25}(4n) q^n \equiv f_1 f_5 + q^4 {f_1}^2 {f_{25}}^4 $$$$+ q \left({f_5}^6 + q^5 {f_{25}}^6 \right) \left( \sum_{n=1}^\infty q^{n^2} + \sum_{n=1}^\infty q^{2n^2} + \sum_{n=1}^\infty q^{5n^2} + \sum_{n=1}^\infty q^{10n^2} \right).$$

Although there are ten terms to sum, each is congruent to a product of two quadratic series and so we have, by Lemma \ref{laclemma}, that all of them are lacunary modulo 2.  

We further establish the claimed zero progressions by analyzing their residues modulo $p^2$ for prime $p$.

For the term $f_1 f_5$ and any prime $p > 5$, we have that $$a \equiv \frac{1}{2}(3i^2-i) + \frac{5}{2}(3j^2 - j) \pmod{p^2},$$
which implies $$a + 4^{-1} \equiv \frac{3}{2} x^2 + \frac{15}{2} y^2 \pmod{p^2},$$
where $4^{-1}$ is taken modulo $p^2$.  By arguments similar to those of Section \ref{twentyonesec}, if $a \equiv -4^{-1} \pmod{p}$ and $x, y \not\equiv 0 \pmod{p}$, then $-5$ is a quadratic residue modulo $p$.  

If $(\frac{-5}{p}) = -1$ then $a \not\equiv kp-4^{-1} \pmod{p^2}$ unless $k \equiv 0 \pmod{p}$, and so this term avoids those progressions.  Therefore, we will find primes for which $(\frac{-5}{p}) = -1$.

For the term $$q^4 {f_1}^2 {f_{25}}^4,$$
similar calculations yield that if $a \equiv -4^{-1} \pmod{p}$ but not modulo $p^2$, then $-50$ is a quadratic residue modulo $p$, which is equivalent to $-2$ being a quadratic residue.  We take primes for which this and the previous condition are simultaneously false.

For the terms $$q {f_5}^6 \sum_{n=1}^\infty q^{An^2},$$
where $A \in \{1, 2, 5, 10\}$, we find that we desire that $-A/5$ not be a quadratic residue modulo $p$. For the terms
$$q^6 {f_{25}}^6 \sum_{n=1}^\infty q^{An^2},$$
we desire that $-A/25$ not be a quadratic residue.

In total, we want all of $-5$, $-2$, and $-1$ to be simultaneously nonresidues modulo $p$, in which case progressions $kp - 4^{-1}$ will be avoided modulo $p^2$ for all $k \not\equiv 0 \pmod{p}$.  

Note that this occurs for primes $p \equiv 31$ or $39 \pmod{40}$.  An example of this clause of the theorem is that $$b_{25}(4(31^2 n + 31k + 240)) \equiv 0  \, \quad \, \text{ for } \, 1 \leq k < 31.$$

For the $4n+2$ clause of the Theorem, we employ Ramanujan's identity (\cite{Ramanujan}, p. 212):
$$\sum_{n=0}^\infty p(5n+4) q^n = 5 \frac{{f_5}^5}{{f_1}^6}.$$
In particular, this tells us that
\begin{equation}\label{rara}
\frac{1}{f_1} \equiv q^4 \frac{{f_{25}}^5}{{f_5}^6} + \left( \dots \right),
\end{equation}
where the elided terms have powers that are not congruent to 4 (mod 5).

In $\sum_{n=0}^\infty b_{25}(4n+2) q^n$ we now repeatedly expand $f_1 f_5$ and its magnified forms, and replace the instance of $1/f_1$ with the right side of (\ref{rara}).  We obtain:
$$\sum_{n=0}^\infty b_{25}(4n+2) q^n \equiv q {f_1}^{12} {f_5}^6 + q^6 {f_1}^{12} {f_{25}}^6 + q^3 {f_5}^{18} + q^8 {f_5}^{12} {f_{25}}^6 $$$$+ q^4 ( {f_5}^{18} + q^5 {f_5}^{12} {f_{25}}^6 + q^{10} {f_5}^6 {f_{25}}^{12} + q^{15} {f_{25}}^{18} ) \cdot \left( q^4 \frac{{f_{25}}^6}{{f_5}^6} + \dots \right) .$$

We now wish to extract those terms that are 3 (mod 5). Note that
$${f_1}^{12} \equiv \sum_{n=0}^\infty q^{4 \binom{n+1}{2}}.$$ The  numbers $4\binom{n+1}{2}$ are  congruent to $2 \pmod{5}$ if and only if $n \equiv 2 \pmod{5}$, and therefore a little algebra gives us that $${f_1}^{12} \equiv q^{12} f_{25}^{12} + ( \dots ),$$ \noindent where the elided terms are not congruent to 2 (mod 5).

We thus obtain $$\sum_{n=0}^\infty b_{25} (20n+14) q^n \equiv q^2 {f_5}^{12} {f_1}^6 + q^3 {f_5}^{18} + {f_1}^{18}$$$$ + q {f_1}^{12} {f_5}^6 + q {f_1}^{12} {f_5}^6 + q^2 {f_1}^6 {f_5}^{12} + q^3 {f_5}^{18} + q^4 \frac{{f_5}^{24}}{{f_1}^6}.$$

Like terms cancel modulo 2, yielding
$$\sum_{n=0}^\infty b_{25} (20n+14) q^n \equiv {f_1}^{18} + q^4 \frac{{f_5}^{24}}{{f_1}^6}.$$
But this can be nonzero only for even $n$, giving us that
$$b_{25}(40n + 34) \equiv 0 .$$

For $b_{25}(4n+1)$, we start by extracting even terms from $b_{25}(2n+1)$, to get:
$$\sum_{n=0}^\infty b_{25}(4n+1) q^n \equiv {f_1}^2 {f_5}^2 + q f_1 {f_5}^2 f_{25}.$$
We now replace $f_1 {f_5}^2 f_{25}$ by $(f_1 f_5)(f_5 f_{25})$ and expand with the usual identity, canceling like terms modulo 2, to eventually obtain:
$$\sum_{n=0}^\infty b_{25}(4n+1) q^n \equiv {f_1}^{12} + q {f_1}^6 {f_5}^6 + q^6 {f_1}^6 {f_{25}}^6 + q^7 {f_5}^6 {f_{25}}^6.$$

Much of the argument is now analogous.  We may treat ${f_1}^{12}$ as ${f_1}^8 {f_1}^4$ to make the arguments consonant.  We find after analysis of all terms that we again desire $-1$, $-2$, and $-5$ to be quadratic residues modulo $p$ when $a \equiv -2^{-1} \pmod{p^2}$, and the clause follows.

Finally, the $8n+3$ clause of Theorem \ref{b25thm} is easier.  Extracting odd terms from $b_{25}(4n+3)$ yields:
$$\sum_{n=0}^\infty b_{25}(4n+3) q^n \equiv \frac{{f_5}^5}{f_1} + q^3 \frac{f_1 {f_{25}}^4}{f_5} $$$$\equiv {f_1}^4 {f_5}^4 + q \frac{{f_5}^{10}}{{f_1}^2} + q^3 \frac{{f_1}^6 {f_{25}}^4}{{f_5}^2} + q^4 {f_5}^4 {f_{25}}^4.$$

Extracting even terms now gives us:
$$\sum_{n=0}^\infty b_{25}(8n+3) q^n \equiv {f_1}^2 {f_5}^2 + q^2 {f_5}^2 {f_{25}}^2.$$  Finally, expanding each instance of $f_1 f_5$ and canceling like terms easily completes the proof.
\end{proof}

We note that, by an additional dissection within the progression $4n+1$, we may also establish the following theorem. While it only holds for progressions modulo $8(p^2 n + pk + \alpha) + 5$, it does hold for more primes.
\begin{theorem}\label{twentyfive8} If $p \equiv 11, 13, 17$, or $19 \pmod{20}$ is prime, then $$b_{25}(8(p^2n+pk - 3 \cdot 4^{-1}) + 5) \equiv 0 $$ for all $1 \leq k < p$, where $-3 \cdot 4^{-1}$ is taken modulo $p^2$.
\end{theorem}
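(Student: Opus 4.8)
The plan is to refine the $b_{25}(4n+1)$ dissection from the proof of Theorem \ref{b25thm} by one further step. There we obtained
$$\sum_{n=0}^\infty b_{25}(4n+1)q^n \equiv {f_1}^{12} + q{f_1}^6{f_5}^6 + q^6{f_1}^6{f_{25}}^6 + q^7{f_5}^6{f_{25}}^6 .$$
Writing ${f_1}^6{f_5}^6 = ({f_1}^3{f_5}^3)^2$ and ${f_5}^6{f_{25}}^6 = ({f_5}^3{f_{25}}^3)^2$ and invoking Lemma \ref{222}, these two products are supported on even powers of $q$; since ${f_1}^{12}\equiv\sum_n q^{4\binom{n+1}{2}}$ is too, the $8n+5$ part of the series comes entirely from $q{f_1}^6{f_5}^6 + q^7{f_5}^6{f_{25}}^6$. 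Extracting these odd powers, halving, and using ${f_1}^3\equiv\sum_n q^{\binom{n+1}{2}}$ together with its $5$-magnification ${f_5}^3\equiv\sum_n q^{5\binom{n+1}{2}}$ (Lemma \ref{222} again), I expect
$$\sum_{n=0}^\infty b_{25}(8n+5)q^n \equiv {f_1}^3{f_5}^3 + q^3{f_5}^3{f_{25}}^3 .$$
Each summand is a product of two quadratic series, so Lemma \ref{laclemma} gives lacunarity modulo 2 (which in any case already follows since $8n+5$ is a subprogression of the lacunary $b_{25}(4n+1)$); the content of Theorem \ref{twentyfive8} is the explicit even progressions, obtained by a residue analysis in the style of Sections \ref{threesec} and \ref{twentyonesec}.

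Next I would complete the square modulo $p^2$ for each term. For ${f_1}^3{f_5}^3\equiv\sum_{i,j}q^{\binom{i+1}{2}+5\binom{j+1}{2}}$, putting $b\equiv i+2^{-1}$ and $c\equiv j+2^{-1}\pmod{p^2}$, a residue $a$ is reachable only if
$$a + 3\cdot 4^{-1}\equiv \tfrac12 b^2 + \tfrac52 c^2 \pmod{p^2}$$
for some integers $b,c$. The key (and slightly delicate) point is that the \emph{same} shift $-3\cdot 4^{-1}$ governs the second term: for $q^3{f_5}^3{f_{25}}^3\equiv\sum_{i,j}q^{3+5\binom{i+1}{2}+25\binom{j+1}{2}}$, completing the square and simplifying $3-15\cdot 4^{-1}\equiv -3\cdot 4^{-1}\pmod{p^2}$ yields $a+3\cdot 4^{-1}\equiv \tfrac52 b^2 + \tfrac{25}{2}c^2 = \tfrac52(b^2+5c^2)\pmod{p^2}$. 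Thus in both cases reachability of a class $a\equiv -3\cdot 4^{-1}\pmod p$ forces $b^2+5c^2\equiv 0\pmod p$.

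From here I would run the familiar dichotomy: if $b\equiv c\equiv 0\pmod p$, then the quadratic form is $\equiv 0\pmod{p^2}$, so $a\equiv -3\cdot 4^{-1}\pmod{p^2}$; otherwise both $b,c\not\equiv 0\pmod p$ and $(b/c)^2\equiv -5\pmod{p^2}$, forcing $-5$ to be a quadratic residue modulo $p$. Hence, whenever $(-5/p)=-1$, neither term can reach a residue $\equiv pk-3\cdot 4^{-1}\pmod{p^2}$ with $1\le k<p$, so $b_{25}(8(p^2n+pk-3\cdot 4^{-1})+5)\equiv 0$ for all such $k$. A short quadratic-reciprocity computation (using $(-5/p)=(-1/p)(p/5)$) then identifies $(-5/p)=-1$ with $p\equiv 11,13,17,19\pmod{20}$, completing the proof.

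I do not anticipate a conceptual obstacle, since the machinery is exactly that of the $m=3$, $m=21$, and $m=25$ cases already treated. The one thing that genuinely has to be checked is the alignment of the constant terms — that the two summands of $\sum b_{25}(8n+5)q^n$ really avoid the \emph{same} residue $-3\cdot 4^{-1}$ modulo $p^2$ — which is precisely what lets the single condition $(-5/p)=-1$ suffice, and explains why Theorem \ref{twentyfive8} holds for the larger prime set $p\equiv 11,13,17,19\pmod{20}$ rather than the sparser set $p\equiv 31,39\pmod{40}$ appearing in the $b_{25}(4n)$ and $b_{25}(4n+1)$ clauses of Theorem \ref{b25thm}. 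One should also verify that the $5$-magnification and halving steps carry the offsets through correctly, but no new ideas are required.
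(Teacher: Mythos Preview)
Your proposal is correct and follows exactly the route the paper indicates (``an additional dissection within the progression $4n+1$''), which the paper does not write out in detail; your derivation of $\sum_{n\ge 0} b_{25}(8n+5)q^n \equiv f_1^3 f_5^3 + q^3 f_5^3 f_{25}^3$ and the ensuing residue analysis are sound. One small expository point: you explicitly handle $f_1^{12}$, $q f_1^6 f_5^6$, and $q^7 f_5^6 f_{25}^6$ but skip over why $q^6 f_1^6 f_{25}^6$ lands in the even part---of course $f_1^6 f_{25}^6 = (f_1^3 f_{25}^3)^2$ by the same Lemma \ref{222} argument, so this is trivial to add.
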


Which of Theorem \ref{b25thm}, Clause (2) or Theorem \ref{twentyfive8} is stronger may be a matter of taste.

We also remark that, experimentally, similar statements seem to hold for progressions of the form $2(p^2 n + pk + \alpha) +1$.  Because these conjectures are often based on self-similarities, they should in principle be verifiable by the same modular forms machinery used elsewhere in this paper, although without further insight the computations might be even more extensive.  We conjecture the following:
\begin{conjecture} For a positive proportion of primes $p$, it holds that $$\sum_{n=0}^\infty b_{25} (2pn+\alpha) q^n \equiv q^{\beta} \sum_{n=0}^\infty b_{25}(2n+1) q^{pn},$$ \noindent for some $\alpha$ and $\beta$ depending on $p$.
\end{conjecture}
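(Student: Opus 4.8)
The plan is to recast the conjectural self-similarity as a single congruence between holomorphic modular forms, and then attack it on two fronts: a prime-by-prime verification that is rigorous but finite, and a structural Chebotarev argument that would deliver the full ``positive proportion'' statement. First I would start from the dissection of $\sum_{n\ge 0} b_{25}(2n+1)q^n$ obtained in the proof of Theorem \ref{b25thm}, namely its reduction modulo 2 to the two-term eta-quotient expression $\frac{f_5^5}{f_1}+q^2 f_1^2\frac{f_{25}^3}{f_5}$. As in Theorems \ref{b9thm}, \ref{b19thm}, and \ref{threesim}, the shift parameters are then forced rather than chosen: $\alpha$ is the unique residue modulo $2p$ aligning the progression $2pn+\alpha$ with the base index $2n+1$ under magnification by $p$, and $\beta$ records the lowest $q$-exponent of the $V(p)$-image $\sum_{n} b_{25}(2n+1)q^{pn}$. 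Concretely these come from completing the square in the leading quadratic form governing $f_5^5/f_1$, exactly as the constants $-3^{-1}$ and $\lfloor 2p/3\rfloor$ arose for $b_9$.

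For a fixed prime $p$ I would clear denominators by multiplying through by a monomial $f_p^{\,j}$ (respectively $f_{p}^{\,j}$ paired with the magnified arguments on the other side) together with a power of $q$, producing
\[
C_p := q^{d_p}\Bigl(\tfrac{f_5^5}{f_1}+q^2 f_1^2\tfrac{f_{25}^3}{f_5}\Bigr)f_p^{\,j}, \qquad
G_p := q^{r_p}\Bigl(\tfrac{f_{5p}^{5}}{f_p}+q^{2p} f_p^2\tfrac{f_{25p}^3}{f_{5p}}\Bigr)f_1^{\,j},
\]
where $G_p$ is built from the $p$-magnification of the same dissection, so that $G_p\equiv q^{r_p}\bigl(\sum_n b_{25}(2n+1)q^{pn}\bigr)f_1^{\,j}$ modulo 2. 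Because each side is a \emph{sum} of two eta-quotients rather than a single one, I would apply Theorem \ref{ghn} to each summand, verify that the summands share weight, level $N_p$ (divisible by $2,5,p$), and character $\chi$, and then use the linearity of $T_p$. Since $p\mid s$ forces $\chi(p)=0$, the factorization property of $T_p$ from Section \ref{back} applies to each summand (both $f_p^{\,j}$ and the magnified factors are power series in $q^p$), so that $C_p\mid T_p\equiv\bigl(\sum_n b_{25}(2pn+\alpha)q^n\bigr)f_1^{\,j}$ with the $q$-shifts arranged so that the comparison $C_p\mid T_p\equiv G_p$ reads off exactly the claimed identity. After confirming holomorphicity of $C_p\mid T_p$ and $G_p$ at all cusps via Theorem \ref{ligozat}, Theorem \ref{sturmthe} reduces $C_p\mid T_p\equiv G_p$ to a finite coefficient check, and dividing out $f_1^{\,j}$ yields the self-similarity for that single $p$. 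This pipeline is identical to the one used for $b_9$ and $b_{19}$ and makes each individual prime decidable.

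The genuine difficulty is upgrading ``some checkable primes'' to ``a positive proportion of primes,'' since the Sturm bound grows with $p$ and a brute-force sweep can never close an infinite family. For this I would pass to the mod-2 picture: the cleared form $\overline{\Phi}\equiv\sum_n b_{25}(2n+1)q^n$, suitably weighted, lies in a finite-dimensional $\overline{\mathbb{F}}_2$-space of modular forms stable under the Hecke algebra away from $\{2,5\}$, and the self-similarity for $p$ is equivalent to $T_p$ acting on $\overline{\Phi}$ as the magnification operator $V(p)$ up to the shift $q^\beta$. Decomposing the Hecke action into systems of eigenvalues attached to mod-2 Galois representations $\rho\colon G_{\Q}\to\GL_2(\overline{\mathbb{F}}_2)$, this condition becomes the simultaneous vanishing of $\mathrm{tr}\,\rho(\mathrm{Frob}_p)$ for the relevant $\rho$, whereupon Chebotarev's density theorem supplies a positive-density set of primes. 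The main obstacle, and the reason this remains a conjecture, is that $\sum_n b_{25}(2n+1)q^n$ is \emph{not} lacunary modulo 2: it contains the conjecturally density-$\tfrac12$ part coming from $b_{25}(4n+2)$ and $b_{25}(8n+7)$. Hence, unlike the clean CM/dihedral theta decompositions that govern $b_3$, $b_9$, and $b_{19}$, I cannot yet exhibit the attached $\rho$ explicitly, nor guarantee that its trace-zero locus is nonempty and compatible across all eigensystem components. Identifying that Galois representation -- or, alternatively, proving that the Hecke action on the relevant subspace is nilpotent, which would force the trace-zero condition on a full density-$\tfrac12$ set of the admissible residue classes -- is precisely the gap that would have to be filled.
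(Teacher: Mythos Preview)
The statement you are addressing is a \emph{conjecture} in the paper, not a theorem: the authors offer no proof and explicitly flag it as open, supported only by the numerical example $\sum_{n} b_{25}(38n+37)q^n \equiv q^{18}\sum_n b_{25}(2n+1)q^{19n}$. There is therefore no proof in the paper against which to compare your proposal.

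Your write-up is accordingly not a proof but a two-part strategy sketch, and you acknowledge this yourself. The first part---packaging the claimed identity as $C_p\mid T_p \equiv G_p$ for holomorphic forms of common weight, level, and character, then invoking Sturm---is exactly the machinery the paper uses to verify individual primes in Theorems \ref{threesim}, \ref{b9thm}, and \ref{b19thm}, so that part is sound as a template for case-by-case verification. One small slip: you cite $b_{25}(4n+2)$ as part of the non-lacunary obstruction inside $\sum_n b_{25}(2n+1)q^n$, but $4n+2$ lies in the even progression; within $2n+1$ the only residue not shown lacunary in Theorem \ref{b25thm} is $8n+7$.

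The second part---reducing the ``positive proportion'' claim to a trace-zero Frobenius condition for mod-2 Galois representations attached to the Hecke eigensystems supporting $\sum_n b_{25}(2n+1)q^n$, then invoking Chebotarev---is a plausible heuristic, but as you correctly note it is not a proof: you have not exhibited the relevant representations, nor shown that their trace-zero loci are nonempty and mutually compatible. This is precisely why the authors left the statement as a conjecture. Your proposal thus agrees with the paper's assessment that the result is open and identifies the same obstruction the paper implicitly leaves unresolved.
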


We note that from such congruences, and any of the base cases which we have shown to be identically zero, infinite families would follow of identically zero progressions with modulus $2p^{2k}$.  For instance, computational data suggests that:
$$\sum_{n=0}^\infty b_{25}(38n+37) q^n \equiv q^{18} \sum_{n=0}^\infty b_{25} (2n+1) q^{19n}.$$
This would establish the base case
$$b_{25}(722n+37) \equiv 0,$$
and from there the infinite family $$b_{25}(2 \cdot 19^{2k} n + 38 \cdot 19^{2k-2} -1) \equiv 0.$$

\section{Future Directions}

All proofs presented in the previous sections of this paper have established results for $m$-regular partitions, which are a particularly interesting class of eta-quotients.  However, it is possible for these results to have broader applicability if we can establish relations between the densities $\delta_m$ and those of other eta-quotients, or even define such densities as invariants of suitable infinite classes. 

As a sample result in this direction, here we provide a proof of Theorem \ref{b6thm} (see Section \ref{listing} for its statement).

\begin{proof}[Proof of Theorem \ref{b6thm}.] Applying identity (\ref{onethree}), we can quickly establish:
\begin{align*}\frac{f_6}{f_1} &\equiv \frac{{f_1}^8}{f_3} + q \frac{{f_3}^{11}}{{f_1}^4}; \\
\frac{{f_3}^{11}}{{f_1}^4} &\equiv {f_1}^5 {f_3}^8 + q \frac{{f_3}^{20}}{{f_1}^7}; \\
\frac{{f_3}^{20}}{{f_1}^7} &\equiv {f_1}^2 {f_3}^{17} + q \frac{{f_3}^{29}}{{f_1}^{10}};
\end{align*}

\noindent and in general, for $k \geq 3$, $$\frac{{f_3}^{9k+2}}{{f_1}^{3k+1}} \equiv \frac{{f_3}^{9k-1}}{{f_1}^{3k-8}}+ q \frac{{f_3}^{9k+11}}{{f_1}^{3k+4}}.$$

Since $$(9k-1)/3 \geq 3k-8$$ for all $k \geq 3$, and the hypothesis of Theorem \ref{cot} also applies for ${f_1}^8 / f_3$, ${f_1}^5 {f_3}^8$, and ${f_1}^2 {f_3}^{17}$, the first term on the right side of each equivalence has density zero. Thus, the left term has the same density as the second term on the right, assuming these densities exist.

It follows that if the density of any of the eta-quotients ${{f_3}^{9k+2}}/{{f_1}^{3k+1}}$ exists, then they all exist, and they are equal to the $k=0$ case, which is the density $\delta_6$ of $B_6$. (Note that we conjectured this latter to be $1/2$ in Conjecture \ref{noncong}.) \end{proof}

Among the most interesting notions that we think pertain to this topic is that of \emph{self-similarity}.  This behavior has been established for the partition function for some odd moduli.  For example, in \cite{FKO}, Folsom, Kent, and Ono proved:
\begin{theorem} Suppose that $5 \leq \ell \leq 31$ is prime and that $m \geq 1$. If $b$ is sufficiently large, then for every prime $c \geq 5$, there exists an integer $\lambda_\ell(m,c)$ such that, for all $n$ coprime to $c$: $$p \left((\ell^b n c^3 + 1)/24 \right) \equiv \lambda_\ell(m,c) p \left( (\ell^b n c + 1)/24 \right) \pmod{\ell^m}.$$
\end{theorem}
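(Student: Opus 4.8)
\section*{Proof proposal for Folsom--Kent--Ono's theorem}

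The plan is to follow the strategy of Folsom, Kent, and Ono: recast the partition congruences as statements about modular forms modulo $\ell^m$, and extract the self-similarity from the action of Hecke operators on a suitably \emph{small} Hecke-stable module. Starting from the classical identity $\sum_{n\ge 0}p(n)q^{24n-1}=1/\eta(24z)$, one applies the sieving operator $U_\ell$ repeatedly: there are a fixed integer $k_\ell$, a fixed level $N_\ell\mid 576\ell$, and a fixed quadratic character $\chi$ such that, for every $b$ and every $m$, the series
$$g_b(z):=\sum_{n\ge 0} p\!\left(\frac{\ell^b n+1}{24}\right)q^n$$
agrees, modulo $\ell^m$ and up to multiplication by a fixed power of $\eta$, with a holomorphic modular form of weight $k_\ell$ on $\Gamma_0(N_\ell)$ with character $\chi$. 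This is a mod $\ell^m$ refinement of exactly the machinery already used in this paper --- Theorem \ref{ghn} to pin down weight, level and character, Theorem \ref{ligozat} to certify holomorphicity, and the Hecke operator $T_c$ together with the $V$ operator to track coefficients --- with the one new ingredient that $M_{k_\ell}(\Gamma_0(N_\ell),\chi)\otimes\Z/\ell^m\Z$ is a \emph{finite} module, so that the role of Sturm's bound (Theorem \ref{sturmthe}) is played, or supplemented, by a finiteness argument.

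Next I would isolate the arithmetic heart of the matter: for $5\le\ell\le 31$, and for $b$ sufficiently large, $g_b\bmod\ell^m$ lies in a \emph{rank-one} Hecke-stable $\Z/\ell^m\Z$-submodule of the space above, so that $g_b$ is a Hecke eigenform modulo $\ell^m$, with $T_c$-eigenvalue $a_c\in\Z/\ell^m\Z$ independent of $b$. The restriction $\ell\le 31$ enters precisely here: it is the range in which the relevant spaces of cusp forms on $\SL_2(\Z)$ --- equivalently, the weights forced by reduction modulo $\ell-1$ --- are small enough (a genus-zero type coincidence) to force this structure, whereas for $\ell=37$ and beyond the associated Hecke algebra is genuinely larger. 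The hypothesis ``$b$ sufficiently large'' is exactly the statement that the nested images $g_b\mid U_\ell^{\,j}\bmod\ell^m$ eventually descend into, and remain in, this smallest submodule; this is proved by a filtration-drop argument, the $\ell$-adic filtration of the $g_b$ decreasing and then stabilizing.

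Granting this, the self-similarity is a short Hecke-recursion computation. Writing $A(n):=p\big(\tfrac{\ell^b n+1}{24}\big)$ for the coefficients of $g_b$ and $\mu:=c^{\,k_\ell-1}\chi(c)$, the eigenform relation $T_c g_b\equiv a_c g_b\pmod{\ell^m}$ gives $A(cn)+\mu A(n/c)\equiv a_c A(n)\pmod{\ell^m}$ for all $n$; for $n$ coprime to $c$ the term $A(n/c)$ vanishes, and iterating yields $A(cn)\equiv a_c A(n)$, $A(c^2n)\equiv(a_c^2-\mu)A(n)$, and $A(c^3n)\equiv a_c(a_c^2-2\mu)A(n)\pmod{\ell^m}$. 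Hence
$$A(c^3n)\equiv\big(a_c^2-2\,c^{\,k_\ell-1}\chi(c)\big)\,A(cn)\pmod{\ell^m}$$
for every $n$ coprime to $c$, and setting $\lambda_\ell(m,c):=a_c^2-2\,c^{\,k_\ell-1}\chi(c)\bmod\ell^m$ --- an integer depending only on $\ell$, $m$, $c$ --- and unwinding $A(c^3n)=p\big(\tfrac{\ell^b nc^3+1}{24}\big)$, $A(cn)=p\big(\tfrac{\ell^b nc+1}{24}\big)$ gives the theorem.

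The main obstacle is the middle step: establishing, modulo $\ell^m$ rather than merely modulo $\ell$, that $g_b$ lands in a rank-one Hecke-stable module for $b\gg 0$. Modulo $\ell$ this rests on classical results on the mod $\ell$ Hecke algebra (Serre, Tate, Jochnowitz) together with the genus-zero coincidences that confine $\ell$ to the range $[5,31]$; promoting it to $\ell^m$ requires controlling the Hecke action on the full $\Z/\ell^m\Z$-module --- a deformation-type argument --- and also making effective the threshold on $b$ and the dependence of $a_c$, hence of $\lambda_\ell(m,c)$, on $\ell$, $m$, and $c$. That, rather than the formal Hecke bookkeeping of the last step, is where the real work lies.
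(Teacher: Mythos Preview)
The paper does not contain a proof of this theorem. It is quoted verbatim from Folsom--Kent--Ono \cite{FKO} in the \emph{Future Directions} section, purely as motivation for the self-similarity phenomena discussed elsewhere in the paper; no argument is supplied, and none of the paper's own machinery (which is tailored to parity questions for eta-quotients, not to congruences modulo powers of $\ell\ge 5$) is applied to it. There is therefore nothing in the paper against which to compare your proposal.

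For what it is worth, your outline is a faithful sketch of the actual strategy in \cite{FKO}: iterate $U_\ell$ on $1/\eta(24z)$, show that for $b$ large the resulting forms $g_b$ stabilize modulo $\ell^m$ into a Hecke-stable module that, for $5\le\ell\le 31$, is effectively rank one, and then read off the relation between $A(c^3n)$ and $A(cn)$ from the Hecke recursion. Your identification of $\lambda_\ell(m,c)=a_c^2-2\,c^{k_\ell-1}\chi(c)$ via the three-step recursion is correct, and you are right that the substantive content is the $\ell$-adic stabilization step, not the final Hecke bookkeeping. But none of this is in the present paper; you would need to consult \cite{FKO} itself for the details you are sketching.
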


The authors of \cite{BLMT} proved analogous results for multipartitions, which are further cases of eta-quotients.  Their theorems again hold modulo powers of primes $\ell \geq 5$.  As for the parity of multipartitions, the closest results seem to come from \cite{JKZ} and \cite{JZ}, in which the present authors, along with Judge, proved congruences modulo 2 such as
$$q \sum_{n=0}^\infty p(19n+4) q^n \equiv \frac{1}{{f_1}^{19}} + \frac{1}{f_{19}},$$
and conjectured an infinite framework that contains identities of that form.  

As part of a program to understand the parity of the partition function and, more broadly, of eta-quotients, we suggest that it may be fruitful to develop general results along the following lines, in order to explain self-similarities of the types repeatedly encountered in this paper.  The following problems suggest themselves, forming a substantial line of research:
\begin{enumerate}
\item State conditions on primes $p$, integers $m$, shifts $k$, and pairs $(A,B)$ and $(C,D)$ with $A > C$, such that it generally holds that $$\sum_{n=0}^\infty b_m(An+B) q^n \equiv q^k \sum_{n=0}^\infty b_m(C n + D) q^{pn}.$$
\item As partial progress toward the above, prove Conjectures \ref{threeconj}, \ref{b9conj}, and \ref{b19conj}, and identify the primes involved in  Conjectures \ref{threeconj} and \ref{b19conj}.  These will suggest infinite families of self-similarities for $m \in \{3, 9, 19\}$. 
\item Find a less computationally intensive process to verify self-similarities such as those from Clause (2) individually (which might well lead to a fuller proof).
\item Prove Conjecture \ref{noncong}. The first clause, that there exists no $(A,B)$ for which $b_m(An+B) \equiv 0$ for the given values of $m$, seems more accessible.
\end{enumerate}

Of course, the latter clause of Conjecture \ref{noncong} on whether the given series have density $1/2$, and more generally, our main Conjecture \ref{mainconj}, are in our opinion of fundamental importance. In fact, solving them might very well require substantial advances of independent interest -- possibly, coming from the theory of modular forms -- in our basic understanding of eta-quotients.  

Finally, since it has never been proven that any eta-quotient has a nonzero odd density, we conclude by offering the suggestion that the next groundbreaking result in this line of investigation will be a proof that some eta-quotient does in fact have this property.

\section*{Acknowledgements} We thank Ken Ono for comments. Marie Jameson and Larry Rolen provided advice regarding a separate project, of which elements were useful background in this work.  Presentation comments by the referee are gratefully accepted.  The second author was partially supported by a Simons Foundation grant (\#630401).

\end{document}